\newcommand{\mcal}[1]{\mathcal{#1}}
\newcommand{\mb}[1]{\mathbb{#1}}
\newcommand{\pl}{\partial}
\newcommand{\ve}{\varepsilon}
\newtheorem{theo}{Theorem}
\newtheorem{prop}{Proposition}
\newtheorem{lemm}{Lemma}
\newcommand{\argmin}{\mathop{\rm argmin}\limits}
\title{
\LARGE \bf
Pontryagin's Minimum Principle and Forward-Backward Sweep Method for the System of HJB-FP Equations in Memory-Limited Partially Observable Stochastic Control
}
\author{Takehiro Tottori$^{1}$ and Tetsuya J. Kobayashi$^{1,2,3,4}$
\thanks{$^{1}$Department of Mathematical Informatics, Graduate School of Information Science and Technology, The University of Tokyo, Tokyo 113-8654, Japan}
\thanks{$^{2}$Institute of Industrial Science, The University of Tokyo, Tokyo 153-8505, Japan}
\thanks{$^{3}$Department of Electrical Engineering and Information Systems, Graduate School of Engineering, The University of Tokyo, Tokyo 113-8654, Japan}
\thanks{$^{4}$Universal Biology Institute, The University of Tokyo, Tokyo 113-8654, Japan}
}
\begin{document}

\maketitle
\thispagestyle{empty}
\pagestyle{empty}

\begin{abstract}
Memory-limited partially observable stochastic control (ML-POSC) is the stochastic optimal control problem under incomplete information and memory limitation. In order to obtain the optimal control function of ML-POSC, a system of the forward Fokker-Planck (FP) equation and the backward Hamilton-Jacobi-Bellman (HJB) equation needs to be solved. In this work, we firstly show that the system of HJB-FP equations can be interpreted via the Pontryagin's minimum principle on the probability density function space. Based on this interpretation, we then propose the forward-backward sweep method (FBSM) to ML-POSC, which has been used in the Pontryagin's minimum principle. FBSM is an algorithm to compute the forward FP equation and the backward HJB equation alternately. Although the convergence of FBSM is generally not guaranteed, it is guaranteed in ML-POSC because the coupling of HJB-FP equations is limited to the optimal control function in ML-POSC. 
\end{abstract}

\section{Introduction}
In many practical applications of the stochastic optimal control theory, several constraints need to be considered. 
Especially in small devices \cite{fox_minimum-information_2016,fox_minimum-information_2016-1} and in biological systems \cite{li_iterative_2006,li_iterative_2007,nakamura_connection_2021,nakamura_optimal_2022,pezzotta_chemotaxis_2018,borra_optimal_2021}, 
incomplete information and memory limitation become predominant because their sensors are extremely noisy and their memory resources are severely limited. 
In order to account these constraints, memory-limited partially observable stochastic control (ML-POSC) has recently been proposed \cite{tottori_memory-limited_2022}. 
Because ML-POSC formulates the noisy observation and the limited memory explicitly, ML-POSC can directly take incomplete information and memory limitation into account in the stochastic optimal control problem. 

However, ML-POSC cannot be solved in the similar way as the conventional stochastic control, which is also called completely observable stochastic control (COSC). 
In COSC, the optimal control function depends only on the Hamilton-Jacobi-Bellman (HJB) equation, which is a time-backward partial differential equation given the terminal condition (Figure \ref{fig: COSC ML-POSC MFSC}(a)) \cite{yong_stochastic_1999}. 
Therefore, the optimal control function of COSC can be obtained by solving the HJB equation backward in time from the terminal condition, which is called the value iteration method \cite{kushner_numerical_1992,fleming_controlled_2006,puterman_markov_2014}.
In contrast, the optimal control function of ML-POSC depends not only on the HJB equation but also on the Fokker-Planck (FP) equation, which is a time-forward partial differential equation given the initial condition (Figure \ref{fig: COSC ML-POSC MFSC}(b)) \cite{tottori_memory-limited_2022}.
Because the HJB equation and the FP equation interact with each other through the optimal control function in ML-POSC, the optimal control function of ML-POSC cannot be obtained by the value iteration method. 

In order to propose an algorithm to ML-POSC, we firstly show that the system of HJB-FP equations can be interpreted via the Pontryagin's minimum principle on the probability density function space. 
The Pontryagin's minimum principle is one of the most representative approaches to the deterministic control, which converts the optimal control problem into the two-point boundary value problem of the forward state equation and the backward adjoint equation \cite{vinter_optimal_2010,lewis_optimal_2012,aschepkov_optimal_2016}. 
We show that the system of HJB-FP equations is an extension of the system of the state and adjoint equations from the deterministic control to the stochastic control.  

The system of HJB-FP equations also appears in the mean-field stochastic control (MFSC) \cite{bensoussan_mean_2013,carmona_probabilistic_2018,carmona_probabilistic_2018-1}. 
Although the relationship between the system of HJB-FP equations and the Pontryagin's minimum principle has been mentioned briefly in MFSC \cite{crisan_master_2014,bensoussan_master_2015,bensoussan_interpretation_2017}, its details have not yet been investigated. 
We resolve this problem by deriving the system of HJB-FP equations in the similar way as the Pontryagin's minimum principle. 

We then propose the forward-backward sweep method (FBSM) to ML-POSC. 
FBSM is an algorithm to compute the forward FP equation and the backward HJB equation alternately, which can be interpreted as an extension of the value iteration method. 
FBSM has been proposed in the Pontryagin's minimum principle of the deterministic control, which computes the forward state equation and the backward adjoint equation alternately \cite{krylov_method_1963,mitter_successive_1966,chernousko_method_1982}. 
Because FBSM is easy to implement, it has been used in many applications \cite{lenhart_optimal_2007,sharp_implementation_2021}.
However, the convergence of FBSM is not guaranteed in the deterministic control except for special cases \cite{hackbusch_numerical_1978,mcasey_convergence_2012} because the coupling of the state and adjoint equations is not limited to the optimal control function (Figure \ref{fig: COSC ML-POSC MFSC}(c)). 
In contrast, we show that the convergence of FBSM is generally guaranteed in ML-POSC because the coupling of HJB-FP equations is limited only to the optimal control function (Figure \ref{fig: COSC ML-POSC MFSC}(b)). 

FBSM corresponds to the fixed-point iteration method in MFSC \cite{carlini_semi-lagrangian_2013,carlini_fully_2014,carlini_semi-lagrangian_2015,lauriere_numerical_2021}. 
Although the fixed-point iteration method is the most basic algorithm in MFSC, its convergence is not guaranteed for the same reason as the deterministic control (Figure \ref{fig: COSC ML-POSC MFSC}(d)). 
Therefore, ML-POSC is a special class where FBSM or the fixed-point iteration method is guaranteed to converge. 

This paper is organized as follows: 
In Section \ref{sec: ML-POSC}, we formulate ML-POSC. 
In Section \ref{sec: OC}, we derive the system of HJB-FP equations from the viewpoint of the Pontryagin's minimum principle. 
In Section \ref{sec: FBSM}, we propose FBSM to ML-POSC and prove the convergence. 
In Section \ref{sec: LQG}, we apply FBSM to the linear-quadratic-Gaussian (LQG) problem. 
In Section \ref{sec: NE}, we verify the convergence of FBSM by numerical experiments.  
In Section \ref{sec: conclusion}, we discuss our work. 
In \ref{appendix: DC}, we briefly review the Pontryagin's minimum principle of the deterministic control. 
In \ref{appendix: MFC}, we show the Pontryagin's minimum principle of MFSC. 
In \ref{appendix: proof}, we show the proof in the main text. 

\begin{figure}[t]
\begin{center}
	\includegraphics[width=100mm]{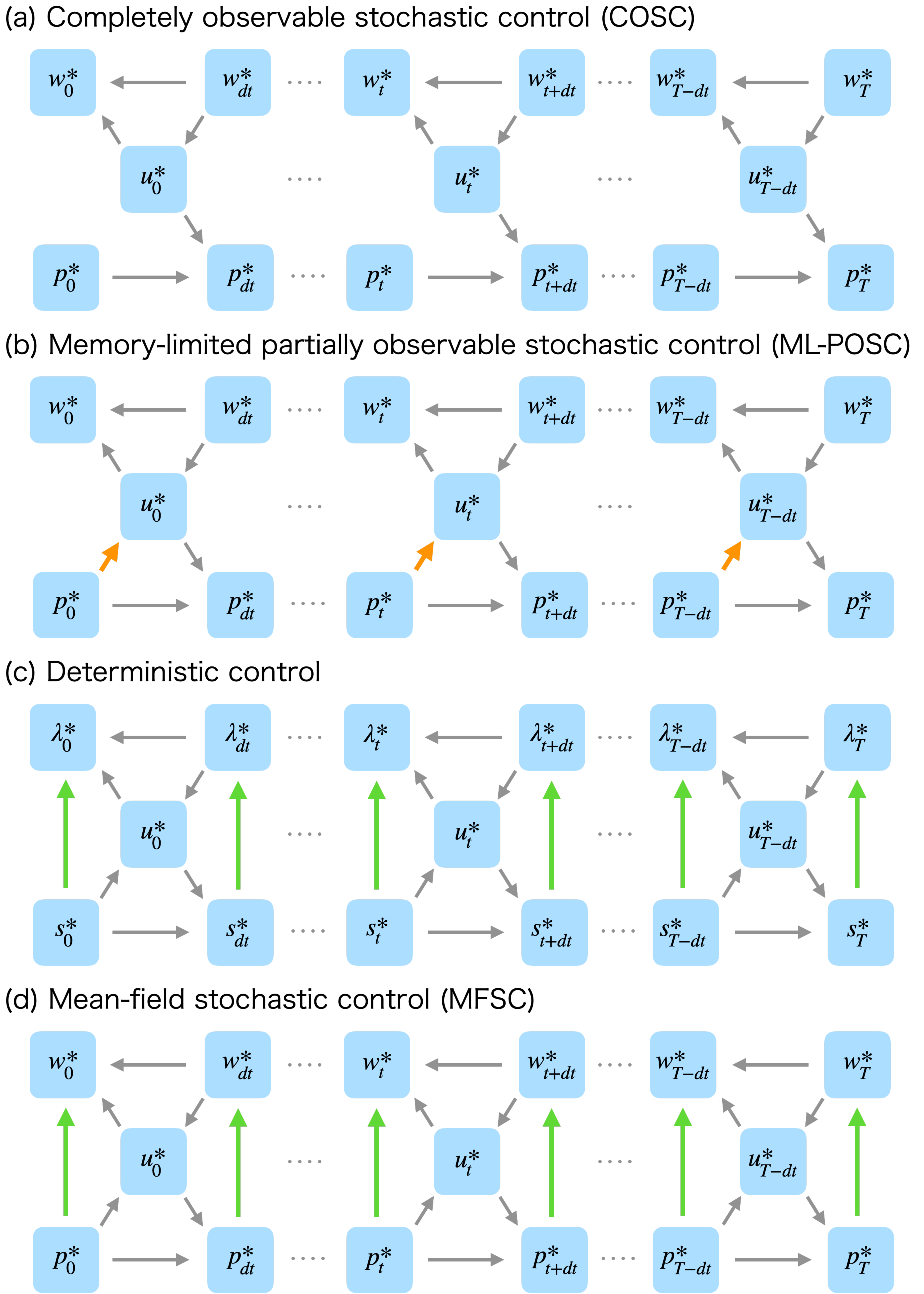}
	\caption{
	Schematic diagram of the relationship between the backward dynamics, the optimal control function, and the forward dynamics in 
	(a) COSC, (b) ML-POSC, (c) deterministic control, and (d) MFSC. 
	$w^{*}$, $p^{*}$, $\lambda^{*}$, and $s^{*}$ are the solutions of the HJB equation, the FP equation, the adjoint equation, and the state equation, respectively. 
	$u^{*}$ is the optimal control function. 
	(a) In COSC, because the optimal control function $u^{*}$ depends only on the HJB equation $w^{*}$, it can be obtained by solving the HJB equation $w^{*}$ backward in time from the terminal condition, which is called the value iteration method. 
	(b) In ML-POSC, because the optimal control function $u^{*}$ depends on the FP equation $p^{*}$ as well as the HJB equation $w^{*}$, it cannot be obtained by the value iteration method. 
	In this paper, we propose FBSM to ML-POSC, which computes the HJB equation $w^{*}$ and the FP equation $p^{*}$ alternately. 
	Because the coupling of the HJB equation $w^{*}$ and the FP equation $p^{*}$ is limited only to the optimal control function $u^{*}$, the convergence of FBSM is guaranteed in ML-POSC. 
	(c) In deterministic control, because the coupling of the adjoint equation $\lambda^{*}$ and the state equation $s^{*}$ is not limited to the optimal control function $u^{*}$, the convergence of FBSM is not guaranteed. 
	(d) In MFSC, because the coupling of the HJB equation $w^{*}$ and the FP equation $p^{*}$ is not limited to the optimal control function $u^{*}$, the convergence of FBSM is not guaranteed. 
	}
	\label{fig: COSC ML-POSC MFSC}
\end{center}
\end{figure}

\section{Memory-Limited Partially Observable Stochastic Control}\label{sec: ML-POSC}
In this section, we briefly review the formulation of ML-POSC \cite{tottori_memory-limited_2022}, which is the stochastic optimal control problem under incomplete information and memory limitation. 

\subsection{Problem Formulation}
In this subsection, we formulate ML-POSC \cite{tottori_memory-limited_2022}. 
The state of the system $x_{t}\in\mb{R}^{d_{x}}$ at time $t\in[0,T]$ evolves by the following stochastic differential equation (SDE): 
\begin{align}
	dx_{t}&=b(t,x_{t},u_{t})dt+\sigma(t,x_{t},u_{t})d\omega_{t},\label{eq: state SDE}
\end{align}
where $x_{0}$ obeys $p_{0}(x_{0})$, $u_{t}\in\mb{R}^{d_{u}}$ is the control, and $\omega_{t}\in\mb{R}^{d_{\omega}}$ is the standard Wiener process. 
In ML-POSC, because the controller cannot completely observe the state $x_{t}$, the observation $y_{t}\in\mb{R}^{d_{y}}$ is obtained instead of the state $x_{t}$, 
which evolves by the following SDE: 
\begin{align}
	dy_{t}&=h(t,x_{t})dt+\gamma(t)d\nu_{t},\label{eq: observation SDE}
\end{align}
where $y_{0}$ obeys $p_{0}(y_{0})$, and $\nu_{t}\in\mb{R}^{d_{\nu}}$ is the standard Wiener process. 
Furthermore, because the controller cannot completely memorize the observation history $y_{0:t}:=\{y_{\tau}|\tau\in[0,t]\}$, 
the observation history $y_{0:t}$ is compressed into the finite-dimensional memory $z_{t}\in\mb{R}^{d_{z}}$, 
which evolves by the following SDE: 
\begin{align}
	dz_{t}=c(t,z_{t},v_{t})dt+\kappa(t,z_{t},v_{t})dy_{t}+\eta(t,z_{t},v_{t})d\xi_{t}, 
	\label{eq: memory SDE}
\end{align}
where $z_{0}$ obeys $p_{0}(z_{0})$, $v_{t}\in\mb{R}^{d_{v}}$ is the control,  and $\xi_{t}\in\mb{R}^{d_{\xi}}$ is the standard Wiener process. 
The controller determines the state control $u_{t}$ and the memory control $v_{t}$ based on the memory $z_{t}$ as follows: 
\begin{align}
	u_{t}=u(t,z_{t}),\ v_{t}=v(t,z_{t}).\label{eq: control of ML-POSC}
\end{align}

The objective function of ML-POSC is given by the following expected cumulative cost function: 
\begin{align}	
	J[u,v]
	:=\mb{E}_{p(x_{0:T},y_{0:T},z_{0:T};u,v)}\left[\int_{0}^{T}f(t,x_{t},u_{t},v_{t})dt+g(x_{T})\right],
	\label{eq: OF of ML-POSC}
\end{align} 
where $f$ is the cost function, $g$ is the terminal cost function, $p(x_{0:T},y_{0:T},z_{0:T};u,v)$ is the probability of $x_{0:T}$, $y_{0:T}$, and $z_{0:T}$ given $u$ and $v$ as parameters, and $\mb{E}_{p}[\cdot]$ is the expectation with respect to the probability $p$. 

ML-POSC is the problem to find the optimal control functions $u^{*}$ and $v^{*}$ that minimize the expected cumulative cost function $J[u,v]$ as follows: 
\begin{align}	
	u^{*},v^{*}=\argmin_{u,v}J[u,v].
	\label{eq: ML-POSC}
\end{align} 

\subsection{Problem Reformulation}
Although the formulation of ML-POSC in the previous subsection is intuitive, it is inconvenient for further mathematical investigations. 
In order to resolve this problem, we reformulate ML-POSC in this subsection. 
The formulation in this subsection is simpler and more general than that in the previous subsection. 

We first define the extended state $s_{t}$ as follows: 
\begin{align}
	s_{t}:=\left(\begin{array}{c}
		x_{t}\\ z_{t}\\
	\end{array}\right)\in\mb{R}^{d_{s}},
\end{align}
where $d_{s}=d_{x}+d_{z}$. The extended state $s_{t}$ evolves by the following SDE: 
\begin{align}
	ds_{t}=\tilde{b}(t,s_{t},\tilde{u}_{t})dt+\tilde{\sigma}(t,s_{t},\tilde{u}_{t})d\tilde{\omega}_{t}, 
	\label{eq: extended state SDE}
\end{align}
where $s_{0}$ obeys $p_{0}(s_{0})$, $\tilde{u}_{t}\in\mb{R}^{d_{\tilde{u}}}$ is the control, and $\tilde{\omega}_{t}\in\mb{R}^{d_{\tilde{\omega}}}$ is the standard Wiener process. 
ML-POSC determines the control $\tilde{u}_{t}\in\mb{R}^{d_{\tilde{u}}}$ based on the memory $z_{t}$ as follows: 
\begin{align}
	\tilde{u}_{t}=\tilde{u}(t,z_{t}).\label{eq: control of GML-POSC}
\end{align}
The extended state SDE (\ref{eq: extended state SDE}) includes the previous SDEs (\ref{eq: state SDE}), (\ref{eq: observation SDE}), (\ref{eq: memory SDE}) as a special case because they can be represented as follows: 
\begin{align}
	&ds_{t}=\left(\begin{array}{c}
		b(t,x_{t},u_{t})\\
		c(t,z_{t},v_{t})+\kappa(t,z_{t},v_{t})h(t,x_{t})\\
	\end{array}\right)dt
	+\left(\begin{array}{ccc}
		\sigma(t,x_{t},u_{t})&O&O\\
		O&\kappa(t,z_{t},v_{t})\gamma(t)&\eta(t,z_{t},v_{t})\\
	\end{array}\right)
	\left(\begin{array}{c}
		d\omega_{t}\\
		d\nu_{t}\\
		d\xi_{t}\\
	\end{array}\right),
\end{align}
where $p_{0}(s_{0})=p_{0}(x_{0})p_{0}(z_{0})$. 

The objective function of ML-POSC is given by the following expected cumulative cost function: 
\begin{align}	
	J[\tilde{u}]:=\mb{E}_{p(s_{0:T};\tilde{u})}\left[\int_{0}^{T}\tilde{f}(t,s_{t},\tilde{u}_{t})dt+\tilde{g}(s_{T})\right].
	\label{eq: OF of GML-POSC}
\end{align}
where $\tilde{f}$ is the cost function, and $\tilde{g}$ is the terminal cost function.  
It is obvious that this objective function (\ref{eq: OF of GML-POSC}) is more general than that in the previous subsection (\ref{eq: OF of ML-POSC}). 

ML-POSC is the problem to find the optimal control function $\tilde{u}^{*}$ that minimizes the expected cumulative cost function $J[\tilde{u}]$ as follows: 
\begin{align}	
	\tilde{u}^{*}=\argmin_{\tilde{u}}J[\tilde{u}]. 
	\label{eq: GML-POSC}
\end{align}

In the following sections, we mainly consider the formulation of this subsection because it is simpler and more general than that in the previous subsection. 
Moreover, we omit $\tilde{\cdot}$ for the notational simplicity. 

\section{Pontryagin's Minimum Principle}\label{sec: OC}
If the control $u_{t}$ is determined based on the extended state $s_{t}$, i.e., $u_{t}=u(t,s_{t})$, ML-POSC is the same problem with COSC of the extended state, and its optimality conditions can be obtained in the conventional way \cite{yong_stochastic_1999}. 
In reality, however, because ML-POSC determines the control $u_{t}$ based only on the memory $z_{t}$, i.e., $u_{t}=u(t,z_{t})$, its optimality conditions cannot be obtained in the similar way as COSC. 
In the previous work \cite{tottori_memory-limited_2022}, the optimality conditions of ML-POSC were obtained by employing a mathematical technique of MFSC \cite{bensoussan_master_2015,bensoussan_interpretation_2017}. 

In this section, we obtain the optimality conditions of ML-POSC by employing the Pontryagin's minimum principle \cite{yong_stochastic_1999,vinter_optimal_2010,lewis_optimal_2012,aschepkov_optimal_2016} on the probability density function space (Figure \ref{fig: PMP}). 
The conventional approach in ML-POSC \cite{tottori_memory-limited_2022} and MFSC \cite{bensoussan_master_2015,bensoussan_interpretation_2017} can be interpreted as the conversion from the Bellman's dynamic programming principle to the Pontryagin's minimum principle on the probability density function space. 

In \ref{appendix: DC}, we briefly review the Pontryagin's minimum principle of the deterministic control. 
In this section, we obtain the optimality conditions of ML-POSC in the similar way as \ref{appendix: DC}. 
Furthermore, in \ref{appendix: MFC}, we obtain the optimality conditions of MFSC in the similar way as \ref{appendix: DC}. 
MFSC is more general than ML-POSC except for the partial observability. 
Especially, the expected Hamiltonian is non-linear with respect to the probability density function in MFSC while  it is linear in ML-POSC. 

\begin{figure}[t]
\begin{center}
	\includegraphics[width=135mm]{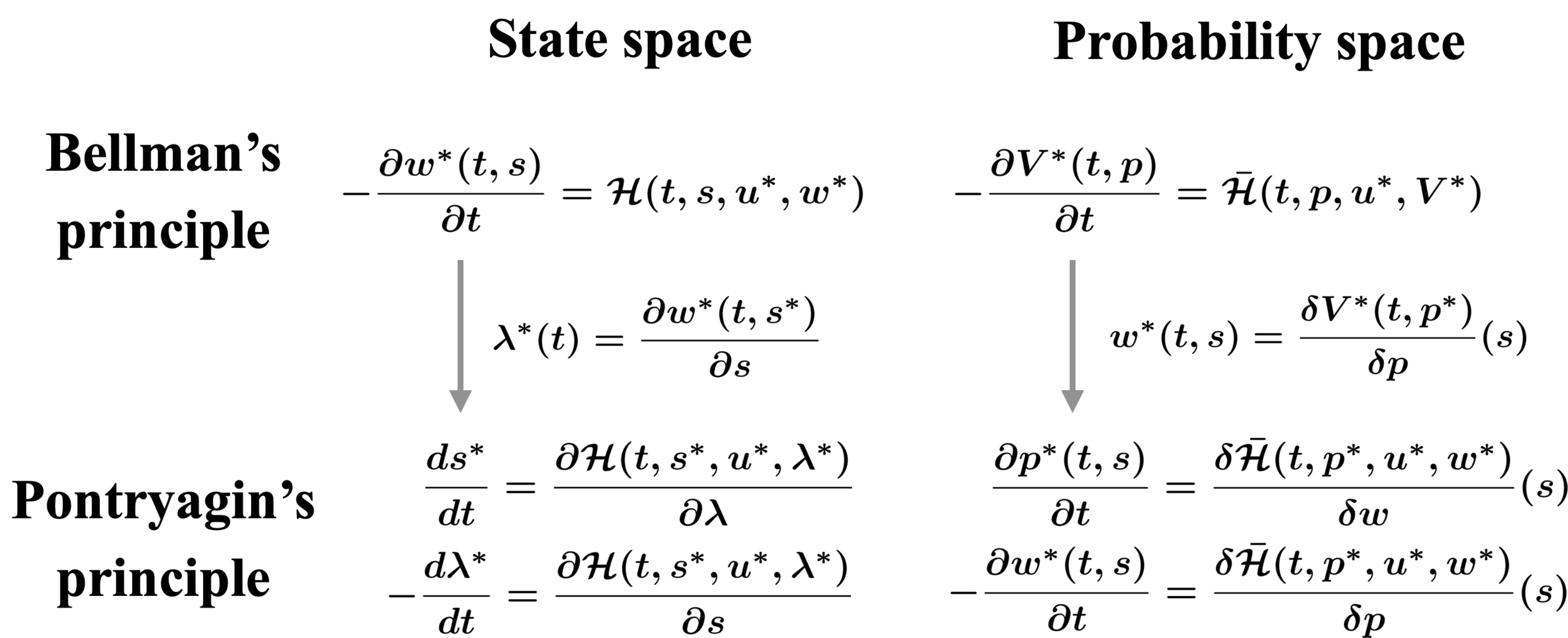}
	\caption{
	The relationship between the Bellman's dynamic programming principle (top) and the Pontryagin's minimum principle (bottom) on the state space (left) and on the probability density function space (right). 
	The left-hand side corresponds to the deterministic control, which is briefly reviewed in \ref{appendix: DC}. 
	The right-hand side corresponds to ML-POSC and MFSC, which are shown in Section \ref{sec: OC} and \ref{appendix: MFC}, respectively. 
	The conventional approach in ML-POSC \cite{tottori_memory-limited_2022} and MFSC \cite{bensoussan_master_2015,bensoussan_interpretation_2017} can be interpreted as the conversion from the Bellman's dynamic programming principle into the Pontryagin's minimum principle on the probability density function space. 
	}
	\label{fig: PMP}
\end{center}
\end{figure}

\subsection{Hamiltonian}
Before we show the optimality conditions, we define the Hamiltonian as follows: 
\begin{align}
	\mcal{H}\left(t,s,u,w\right):=f(t,s,u)+\mcal{L}_{u}w(t,s), 
	\label{eq: Hamiltonian}
\end{align}
where $^\forall w:[0,T]\times\mb{R}^{d_{s}}\to\mb{R}$, and $\mcal{L}_{u}$ is the backward diffusion operator, which is defined as follows: 
\begin{align}
	\mcal{L}_{u}w(t,s)&:=\sum_{i=1}^{d_{s}}b_{i}(t,s,u)\frac{\pl w(t,s)}{\pl s_{i}}+\frac{1}{2}\sum_{i,j=1}^{d_{s}}D_{ij}(t,s,u)\frac{\pl^{2} w(t,s)}{\pl s_{i}\pl s_{j}}, 
	\label{eq: backward diffusion operator}
\end{align}
where $D(t,s,u):=\sigma(t,s,u)\sigma^{\top}(t,s,u)$. 
The extended state SDE (\ref{eq: extended state SDE}) can be converted into the following Fokker-Planck (FP) equation: 
\begin{align}
	\frac{\pl p(t,s)}{\pl t}=\mcal{L}_{u}^{\dagger}p(t,s),
	\label{eq: FP eq}
\end{align}
where $p(0,s)=p_{0}(s)$, and $\mcal{L}_{u}^{\dag}$ is the forward diffusion operator, which is defined as follows: 
\begin{align}
	\mcal{L}_{u}^{\dag}p(t,s)&:=-\sum_{i=1}^{d_{s}}\frac{\pl (b_{i}(t,s,u)p(t,s))}{\pl s_{i}}+\frac{1}{2}\sum_{i,j=1}^{d_{s}}\frac{\pl^{2}(D_{ij}(t,s,u)p(t,s))}{\pl s_{i}\pl s_{j}}. 
	\label{eq: forward diffusion operator}
\end{align}
We note that $\mcal{L}_{u}^{\dag}$ is the conjugate of $\mcal{L}_{u}$ as follows: 
\begin{align}
	\int w(t,s)\mcal{L}_{u}^{\dag}p(t,s)ds=\int p(t,s)\mcal{L}_{u}w(t,s)ds,
	\label{eq: conjugate}
\end{align}
The following lemma shows the relationship between the objective function $J[u]$ and the Hamiltonian $\mcal{H}(t,s,u,w)$, 
which is significant for the optimality conditions.  

\begin{lemm}\label{lemm: J-J ML-POSC}
Let $^\forall u: [0,T]\times\mb{R}^{d_{z}}\to\mb{R}^{d_{u}}$ and $^\forall u': [0,T]\times\mb{R}^{d_{z}}\to\mb{R}^{d_{u}}$ be the arbitrary control functions, and let $p$ and $p'$ be the probability density functions of the extended state driven by $u$ and $u'$, respectively.  
Then $J[u]-J[u']$ satisfies the following equation: 
\begin{align}
	J[u]-J[u']
	&=\int_{0}^{T}\left(\mb{E}_{p(t,s)}\left[\mcal{H}(t,s,u,w')\right]-\mb{E}_{p(t,s)}\left[\mcal{H}(t,s,u',w')\right]\right)dt,
	\label{eq: J-J ML-POSC}
\end{align}
where $w'$ is the solution of the following Hamilton-Jacobi-Bellman (HJB) equation: 
\begin{align}
	-\frac{\pl w'(t,s)}{\pl t}=\mcal{H}\left(t,s,u',w'\right), 
	\label{eq: HJB eq}
\end{align}
where $w'(T,s)=g(s)$. 
\end{lemm}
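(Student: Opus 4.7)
The plan is to employ the adjoint/duality technique characteristic of the Pontryagin's minimum principle: the solution $w'$ of the HJB equation plays the role of an adjoint variable that converts the terminal cost into an integral over $[0,T]$, after which the cost difference reduces to a comparison of expected Hamiltonians.

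First, I would rewrite $J[u]=\mb{E}_{p(T,s)}[g(s)]+\int_{0}^{T}\mb{E}_{p(t,s)}[f(t,s,u)]\,dt$, using that $p(t,s)$ is the marginal law of $s_{t}$ driven by $u$ and hence satisfies the FP equation (\ref{eq: FP eq}) with $p(0,s)=p_{0}(s)$. Next, I would compute $\tfrac{d}{dt}\int p(t,s)w'(t,s)\,ds$ by differentiating under the integral. Two terms appear: the $\pl_{t}p$ contribution is replaced by $\mcal{L}_{u}^{\dag}p$ through the FP equation and transferred onto $w'$ via the conjugacy relation (\ref{eq: conjugate}); the $\pl_{t}w'$ contribution is replaced by $-\mcal{H}(t,s,u',w')$ using the HJB equation (\ref{eq: HJB eq}).

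Integrating this time derivative from $0$ to $T$ and invoking the terminal condition $w'(T,s)=g(s)$ together with $p(0,s)=p_{0}(s)$ yields the identity $\mb{E}_{p(T,s)}[g(s)]-\mb{E}_{p_{0}}[w'(0,s)]=\int_{0}^{T}\mb{E}_{p(t,s)}[\mcal{L}_{u}w'-f(t,s,u')-\mcal{L}_{u'}w']\,dt$. Adding $\int_{0}^{T}\mb{E}_{p(t,s)}[f(t,s,u)]\,dt$ to both sides and regrouping through the definition of the Hamiltonian (\ref{eq: Hamiltonian}) produces the clean expression $J[u]=\mb{E}_{p_{0}}[w'(0,s)]+\int_{0}^{T}\mb{E}_{p(t,s)}[\mcal{H}(t,s,u,w')-\mcal{H}(t,s,u',w')]\,dt$. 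Repeating exactly the same manipulation with $u$ replaced by $u'$ (so that $p$ is replaced by $p'$) makes the Hamiltonian difference vanish identically, giving $J[u']=\mb{E}_{p_{0}}[w'(0,s)]$. Subtracting the two identities cancels the common initial term $\mb{E}_{p_{0}}[w'(0,s)]$, which is shared because $p$ and $p'$ both start from $p_{0}$, and delivers (\ref{eq: J-J ML-POSC}).

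The main technical obstacle is justifying the exchange of differentiation and integration together with the spatial integration by parts implicit in the conjugacy (\ref{eq: conjugate}); this requires sufficient decay of $p$, $p'$ at spatial infinity and enough smoothness of $w'$ so that boundary contributions vanish. Under the standard regularity assumptions that underlie well-posedness of the FP and HJB equations, these concerns dissolve and the remainder of the argument is a routine chain of substitutions.
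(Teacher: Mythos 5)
Your proposal is correct and rests on the same adjoint/duality computation as the paper's proof: replace $f$ by $\mcal{H}-\mcal{L}_{u}w'$, use the conjugacy (\ref{eq: conjugate}), the FP equation (\ref{eq: FP eq}), the HJB equation (\ref{eq: HJB eq}) with $w'(T,s)=g(s)$, and the shared initial density $p_{0}$. The only difference is bookkeeping: the paper integrates $\int\!\!\int \pl_{t}\left(p-p'\right)w'\,ds\,dt$ by parts on the difference directly, whereas you integrate $\frac{d}{dt}\int p\,w'\,ds$ for each control separately, obtaining the intermediate representation $J[u']=\mb{E}_{p_{0}}\left[w'(0,s)\right]$ before subtracting, which yields the identical identity (\ref{eq: J-J ML-POSC}).
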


\begin{proof}
The proof is shown in \ref{appendix: proof: lemm: J-J ML-POSC}. 
\end{proof}

\subsection{Necessary Condition}
We show the necessary condition of the optimal control function of ML-POSC that corresponds to the Pontryagin's minimum principle on the probability density function space. 

\begin{theo}\label{theo: NC ML-POSC}
In ML-POSC, the optimal control function $u^{*}$ satisfies the following equation:
\begin{align}
	u^{*}(t,z)=\argmin_{u}\mb{E}_{p_{t}^{*}(x|z)}\left[\mcal{H}\left(t,s,u,w^{*}\right)\right],\ a.s.\ ^\forall t\in[0,T],\ ^\forall z\in\mb{R}^{d_{z}},
	\label{eq: optimal control of ML-POSC}
\end{align}
where $p_{t}^{*}(x|z):=p^{*}(t,s)/\int p^{*}(t,s)dx$ is the conditional probability density function of the state $x$ given the memory $z$, and $p^{*}(t,s)$ is the solution of the FP equation: 
\begin{align}
	\frac{\pl p^{*}(t,s)}{\pl t}=\mcal{L}_{u^{*}}^{\dagger}p^{*}(t,s),
	\label{eq: optimal FP eq}
\end{align}
where $p^{*}(0,s)=p_{0}(s)$. 
$w^{*}(t,s)$ is the solution of the following HJB equation: 
\begin{align}
	-\frac{\pl w^{*}(t,s)}{\pl t}=\mcal{H}\left(t,s,u^{*},w^{*}\right), 
	\label{eq: optimal HJB eq}
\end{align}
where $w^{*}(T,s)=g(s)$. 
\end{theo}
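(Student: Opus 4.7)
The strategy is to extract the pointwise minimization condition from Lemma \ref{lemm: J-J ML-POSC} via a spike (needle) variation argument. Applying the lemma with $u' = u^*$ and $w' = w^*$ gives
\[
J[u] - J[u^*] = \int_0^T \mb{E}_{p_u(t,s)}\bigl[\mcal{H}(t,s,u,w^*) - \mcal{H}(t,s,u^*,w^*)\bigr] dt,
\]
where $p_u$ denotes the FP solution driven by $u$; optimality of $u^*$ makes this nonnegative for every admissible $u$. To expose the dependence on the memory, I factor $p_u(t,s) = p_u(t,z)\, p_{u,t}(x|z)$ and use that both $u$ and $u^*$ depend on $s$ only through $z$:
\[
J[u] - J[u^*] = \int_0^T\!\!\int p_u(t,z)\Bigl(\mb{E}_{p_{u,t}(x|z)}[\mcal{H}(t,s,u,w^*)] - \mb{E}_{p_{u,t}(x|z)}[\mcal{H}(t,s,u^*,w^*)]\Bigr) dz\, dt.
\]

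I then argue by contradiction. If (\ref{eq: optimal control of ML-POSC}) failed on a positive-measure set, there would exist $t_0 \in (0,T)$, $z_0 \in \mb{R}^{d_{z}}$, $v \in \mb{R}^{d_{u}}$, and $\eta > 0$ with $\mb{E}_{p_{t_0}^*(x|z_0)}[\mcal{H}(t_0,s,v,w^*)] \leq \mb{E}_{p_{t_0}^*(x|z_0)}[\mcal{H}(t_0,s,u^*(t_0,z_0),w^*)] - \eta$, and by continuity the inequality persists on a neighborhood. Take the needle control $u_\epsilon$ equal to $v$ on the cylinder $[t_0, t_0+\epsilon] \times B_\rho(z_0)$ (in $(t,z)$-coordinates) and to $u^*$ elsewhere. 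A continuous-dependence estimate for the FP equation yields $\|p_{u_\epsilon}(t,\cdot) - p^*(t,\cdot)\|_{L^1} = O(\epsilon)$ uniformly in $t$: the difference $\delta p := p_{u_\epsilon} - p^*$ satisfies $\pl_t \delta p = \mcal{L}_{u_\epsilon}^\dagger \delta p + (\mcal{L}_{u_\epsilon}^\dagger - \mcal{L}_{u^*}^\dagger) p^*$ with zero initial data, and the source term is supported on a set of time-measure $\epsilon$, so a Gr\"onwall argument produces the claim.

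Substituting $p_{u_\epsilon} = p^* + \delta p$ in the formula, the $\delta p$-contribution is $O(\epsilon^2)$ because the Hamiltonian difference vanishes off the needle's time-support of length $\epsilon$, so the leading term is
\[
\epsilon \int_{B_\rho(z_0)} p^*(t_0,z)\Bigl(\mb{E}_{p_{t_0}^*(x|z)}[\mcal{H}(t_0,s,v,w^*)] - \mb{E}_{p_{t_0}^*(x|z)}[\mcal{H}(t_0,s,u^*(t_0,z),w^*)]\Bigr) dz,
\]
which is strictly negative for small $\rho$. Therefore $J[u_\epsilon] < J[u^*]$, contradicting optimality, so (\ref{eq: optimal control of ML-POSC}) must hold for a.e.\ $(t,z)$. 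The main obstacle is precisely the continuous-dependence estimate with perturbations in both drift and diffusion localized on the small cylinder; under the usual Lipschitz/boundedness assumptions on $b$ and $\sigma$ this is a routine linear-parabolic calculation, but care is needed to keep the bound linear in $\epsilon$ rather than $\sqrt{\epsilon}$, since the coefficient perturbation is itself $O(1)$ in amplitude and only shrinks through the time-measure of its support.
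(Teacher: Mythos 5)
Your proposal is correct and follows essentially the same route as the paper's own proof: apply Lemma \ref{lemm: J-J ML-POSC} with $u'=u^{*}$, note that because both expectations are taken under the density driven by the perturbed control the integrand vanishes off the needle's $(t,z)$-support, and pass to the limit using the convergence of the perturbed FP solution to $p^{*}$ before invoking optimality. The only differences are cosmetic: you phrase it as a contradiction and make explicit the $O(\epsilon)$ continuous-dependence and $O(\epsilon^{2})$ remainder estimates that the paper absorbs into its ``letting $\ve_{1}\to0$, $\ve_{2}\to0$'' step.
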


\begin{proof}
The proof is shown in \ref{appendix: proof: theo: NC ML-POSC}. 
\end{proof}

In the Pontryagin's minimum principle of the deterministic control, the state equation (\ref{eq: optimal state equation}) and the adjoint equation (\ref{eq: optimal adjoint equation}) can be expressed by the derivatives of the Hamiltonian (\ref{appendix: DC}). 
Similarly, the system of HJB-FP equations (\ref{eq: optimal FP eq}) and (\ref{eq: optimal HJB eq}) can be expressed by the variations of the expected Hamiltonian
\begin{align}
	\bar{\mcal{H}}(t,p,u,w):=\mb{E}_{p(s)}\left[\mcal{H}\left(t,s,u,w\right)\right], 
	\label{eq: expected Hamiltonian}
\end{align}
as follows: 
\begin{align}
	\frac{\pl p^{*}(t,s)}{\pl t}&=\frac{\delta \bar{\mcal{H}}(t,p^{*},u^{*},w^{*})}{\delta w}(s),\label{eq: optimal FP eq PMP}\\
	-\frac{\pl w^{*}(t,s)}{\pl t}&=\frac{\delta \bar{\mcal{H}}(t,p^{*},u^{*},w^{*})}{\delta p}(s), \label{eq: optimal HJB eq PMP}
\end{align}
where $p^{*}(0,s)=p_{0}(s)$ and $w^{*}(T,s)=g(s)$. 
Therefore, the system of HJB-FP equations can be interpreted via the Pontryagin's minimum principle on the probability density function space. 

\subsection{Sufficient Condition}
The Pontryagin's minimum principle becomes a necessary and sufficient condition if the expected Hamiltonian is convex as follows: 

\begin{prop}\label{prop: SC ML-POSC}
Assume that the expected Hamiltonian $\bar{\mcal{H}}(t,p,u,w)$ is convex with respect to $p$ and $u$. 
If the control function $u^{*}$ satisfies (\ref{eq: optimal control of ML-POSC}), it is the optimal control function of ML-POSC. 
\end{prop}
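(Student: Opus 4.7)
The plan is to reduce Proposition \ref{prop: SC ML-POSC} to the identity in Lemma \ref{lemm: J-J ML-POSC}, applied with $u' = u^{*}$. For any admissible $u$ this gives
\begin{align*}
J[u] - J[u^{*}] = \int_{0}^{T} \left[\bar{\mcal{H}}(t, p, u, w^{*}) - \bar{\mcal{H}}(t, p, u^{*}, w^{*})\right] dt,
\end{align*}
where $p$ is the density driven by $u$ and $w^{*}$ is the HJB solution associated with $u^{*}$. Sufficiency then reduces to showing that the integrand is nonnegative for almost every $t$, and the rest of the proof is dedicated to establishing that bound.

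To control the integrand I would invoke the subgradient inequality arising from the assumed joint convexity of $\bar{\mcal{H}}(t, \cdot, \cdot, w^{*})$ in $(p, u)$, taken at the base point $(p^{*}, u^{*})$:
\begin{align*}
\bar{\mcal{H}}(t, p, u, w^{*}) &\geq \bar{\mcal{H}}(t, p^{*}, u^{*}, w^{*}) + \int \frac{\delta \bar{\mcal{H}}(t, p^{*}, u^{*}, w^{*})}{\delta p}(s)\,(p(s) - p^{*}(s))\,ds \\
&\quad + \int \frac{\delta \bar{\mcal{H}}(t, p^{*}, u^{*}, w^{*})}{\delta u}(z)\,(u(z) - u^{*}(z))\,dz.
\end{align*}
Two ingredients then have to be plugged in. First, the necessary condition (\ref{eq: optimal control of ML-POSC}) from Theorem \ref{theo: NC ML-POSC} says that $u^{*}(t, \cdot)$ is the argmin of $\bar{\mcal{H}}(t, p^{*}, \cdot, w^{*})$; combined with convexity in $u$ this forces the first-order condition $\delta \bar{\mcal{H}}(t, p^{*}, u^{*}, w^{*})/\delta u \equiv 0$, killing the last term. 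Second, since $\bar{\mcal{H}}$ is \emph{linear} in $p$ in ML-POSC, a direct computation gives
\begin{align*}
\frac{\delta \bar{\mcal{H}}(t, p^{*}, u^{*}, w^{*})}{\delta p}(s) = \mcal{H}(t, s, u^{*}, w^{*}),
\end{align*}
so the $p$-variation term equals exactly $\bar{\mcal{H}}(t, p, u^{*}, w^{*}) - \bar{\mcal{H}}(t, p^{*}, u^{*}, w^{*})$. Substituting both identities, the right-hand side of the subgradient inequality collapses to $\bar{\mcal{H}}(t, p, u^{*}, w^{*})$, giving $\bar{\mcal{H}}(t, p, u, w^{*}) \geq \bar{\mcal{H}}(t, p, u^{*}, w^{*})$ a.e., and plugging back into the Lemma 1 identity yields $J[u] \geq J[u^{*}]$.

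The most delicate step I anticipate is going from the argmin characterization of $u^{*}$ in Theorem \ref{theo: NC ML-POSC} to the variational identity $\delta \bar{\mcal{H}}(t, p^{*}, u^{*}, w^{*})/\delta u \equiv 0$; convexity of $\bar{\mcal{H}}$ in $u$ is what legitimizes this, and in the event that admissible controls are confined to a proper convex subset one has to make do with the one-sided variational inequality $\int (\delta \bar{\mcal{H}}/\delta u)(u - u^{*}) dz \geq 0$, which still suffices. A second subtle point worth flagging is that the clean cancellation in the $p$-variation term rests crucially on the \emph{linearity} of $\bar{\mcal{H}}$ in $p$, a feature specific to ML-POSC; in MFSC, where $\bar{\mcal{H}}$ is only convex in $p$, this cancellation breaks down and a sufficient condition requires genuinely more structure.
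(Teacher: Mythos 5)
Your proposal is correct and follows essentially the same route as the paper's proof: apply Lemma \ref{lemm: J-J ML-POSC} with $u'=u^{*}$, invoke the convexity (subgradient) inequality for $\bar{\mcal{H}}$ at $(p^{*},u^{*})$, use the linearity of $\bar{\mcal{H}}$ in $p$ to identify $\delta\bar{\mcal{H}}/\delta p$ with $\mcal{H}(t,s,u^{*},w^{*})$, and kill the $u$-variation term via the stationarity implied by the argmin condition (\ref{eq: optimal control of ML-POSC}). Your remark about replacing stationarity with the one-sided variational inequality in the constrained case is a sensible refinement not present in the paper, but the core argument is the same.
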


\begin{proof}
The proof is shown in \ref{appendix: proof: theo: SC ML-POSC}. 
\end{proof}

\subsection{Relationship with Bellman's Dynamic Programming Principle}\label{sec: DPP ML-POSC}
From the Bellman's dynamic programming principle on the probability density function space \cite{tottori_memory-limited_2022}, the optimal control function of ML-POSC is given by the following equation: 
\begin{align}
	u^{*}(t,z,p)=\argmin_{u}\mb{E}_{p(x|z)}\left[\mcal{H}\left(t,s,u,\frac{\delta V^{*}(t,p)}{\delta p}(s)\right)\right].
	\label{eq: optimal control of ML-POSC DPP}
\end{align}
More specifically, the optimal control function of ML-POSC is given by $u^{*}(t,z)=u^{*}(t,z,p^{*})$, where $p^{*}$ is the solution of the FP equation (\ref{eq: optimal FP eq}). 
$V^{*}(t,p)$ is the value function on the probability density function space, which is the solution of the following Bellman equation: 
\begin{align}
	-\frac{\pl V^{*}(t,p)}{\pl t}=\mb{E}_{p(s)}\left[\mcal{H}\left(t,s,u^{*},\frac{\delta V^{*}(t,p)}{\delta p}(s)\right)\right],
	\label{eq: Bellman eq}
\end{align}
where $V^{*}(T,p)=\mb{E}_{p(s)}\left[g(s)\right]$. 
Because the Bellman equation (\ref{eq: Bellman eq}) is a functional differential equation, it cannot be solved even numerically. 
In order to resolve this problem, the previous work \cite{tottori_memory-limited_2022} converted the Bellman equation (\ref{eq: Bellman eq}) into the system of HJB-FP equations (\ref{eq: optimal FP eq}) and (\ref{eq: optimal HJB eq}) as follows: 

\begin{prop}[\cite{tottori_memory-limited_2022}]\label{prop: DPP ML-POSC}
We define $w^{*}(t,s)$ from $V^{*}(t,p)$ as follows: 
\begin{align}
	w^{*}(t,s):=\frac{\delta V^{*}(t,p^{*})}{\delta p}(s), 
\end{align}
where $p^{*}$ is the solution of FP equation (\ref{eq: optimal FP eq}). 
$w^{*}$ satisfies HJB equation (\ref{eq: optimal HJB eq}) from the Bellman equation (\ref{eq: Bellman eq}). 
\end{prop}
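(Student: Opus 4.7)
The approach is to differentiate the Bellman equation (\ref{eq: Bellman eq}) with respect to $p$ at an arbitrary point $s\in\mb{R}^{d_{s}}$, evaluate the result along the optimal trajectory $p=p^{*}(t)$, and reconcile it with the total time derivative of $w^{*}(t,s):=(\delta V^{*}/\delta p)(t,p^{*}(t))(s)$ via the chain rule. What must be shown is that the extra terms arising from the $p$-dependence of the functional derivative collapse once the forward Fokker-Planck equation (\ref{eq: optimal FP eq}) is used, leaving exactly the HJB equation (\ref{eq: optimal HJB eq}).

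First I would take $\delta/\delta p(s)$ of both sides of (\ref{eq: Bellman eq}). The right-hand side, $\int p(s')\,\mcal{H}(t,s',u^{*}(t,z',p),(\delta V^{*}/\delta p)(s'))\,ds'$, varies in three ways: (i) the direct variation of $p$ in the outer integration yields $\mcal{H}(t,s,u^{*},\delta V^{*}/\delta p)$ evaluated at $s$; (ii) the variation through $\delta V^{*}/\delta p$ appearing inside $\mcal{H}$ produces $\int p(s')\mcal{L}_{u^{*}}^{s'}[\delta^{2}V^{*}/\delta p(s)\delta p(s')]\,ds'$, which by the conjugate identity (\ref{eq: conjugate}) equals $\int(\delta^{2}V^{*}/\delta p(s)\delta p(s'))\,\mcal{L}_{u^{*}}^{\dag}p(s')\,ds'$; and (iii) the variation through the optimal control $u^{*}(t,z',p)$ vanishes by an envelope argument. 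To see this, I would factorize the outer integral as $\int p(z')\bigl[\int p(x'|z')(\pl\mcal{H}/\pl u)dx'\bigr](\delta u^{*}/\delta p(s))\,dz'$; the inner bracket is the first-order optimality condition embedded in (\ref{eq: optimal control of ML-POSC DPP}) and is therefore zero $p(z')$-a.s.

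Next I would apply the chain rule to the definition of $w^{*}$, obtaining
\begin{align*}
\frac{\pl w^{*}(t,s)}{\pl t}=\left[\frac{\pl}{\pl t}\frac{\delta V^{*}(t,p)}{\delta p}(s)\right]_{p=p^{*}(t)}+\int\frac{\delta^{2}V^{*}(t,p^{*}(t))}{\delta p(s)\delta p(s')}\frac{\pl p^{*}(t,s')}{\pl t}ds'.
\end{align*}
Substituting (\ref{eq: optimal FP eq}) rewrites the second term as $\int(\delta^{2}V^{*}/\delta p(s)\delta p(s'))\mcal{L}_{u^{*}}^{\dag}p^{*}(s')ds'$, which is exactly term (ii) from the previous step evaluated at $p=p^{*}$. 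Plugging in the expression for $[\pl/\pl t](\delta V^{*}/\delta p)$ obtained from the differentiated Bellman equation, the two $\delta^{2}V^{*}/\delta p\,\delta p$ contributions cancel, and what remains is $-\pl_{t}w^{*}(t,s)=\mcal{H}(t,s,u^{*},w^{*})$. The terminal condition follows because $V^{*}(T,p)=\mb{E}_{p(s)}[g(s)]$ is linear in $p$, so $(\delta V^{*}/\delta p)(T,p)(s)=g(s)$ regardless of $p$.

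The principal obstacle is making the envelope step and the second-functional-derivative manipulation rigorous: both require that $V^{*}$ admit sufficiently smooth first and second functional derivatives in $p$, and that $u^{*}(t,z,p)$ be a smooth interior minimizer so that the $z$-conditional first-order condition can be invoked without boundary corrections. Once these regularity hypotheses are granted, as is standard in the mean-field control literature cited in the paper, the cancellation between the $\delta^{2}V^{*}/\delta p^{2}$ terms — the essential algebraic content of the derivation — follows directly from the forward evolution of $p^{*}$ prescribed by the FP equation.
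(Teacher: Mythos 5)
Your proposal is correct (modulo the regularity assumptions you flag) and takes essentially the paper's own route: the paper defers the proof of Proposition \ref{prop: DPP ML-POSC} to \cite{tottori_memory-limited_2022}, but its in-text proof of the deterministic analogue, Proposition \ref{prop: DPP DC} in \ref{appendix: DC}, is exactly this argument --- differentiate the Bellman equation with respect to the forward variable (here in $p$, with the variation through $u^{*}$ killed by the conditional first-order optimality condition), apply the chain rule to $w^{*}(t,s)=(\delta V^{*}(t,p^{*})/\delta p)(s)$, and cancel the second-derivative terms using the forward dynamics, here the FP equation (\ref{eq: optimal FP eq}). Your handling of the terminal condition via the linearity of $V^{*}(T,p)=\mb{E}_{p(s)}[g(s)]$ likewise matches the intended argument.
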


\begin{proof}
The proof is shown in \cite{tottori_memory-limited_2022}. 
\end{proof}

This approach \cite{tottori_memory-limited_2022} can be interpreted as the conversion from the Bellman's dynamic programming principle to the Pontryagin's minimum principle on the probability density function space. 

\subsection{Relationship with Completely Observable Stochastic Control}\label{sec: COSC}
In COSC of the extended state, the control $u_{t}$ is determined based on the extended state  $s_{t}$, i.e., $u_{t}=u(t,s_{t})$. 
In COSC of the extended state, the Pontryagin's minimum principle on the probability density function space is given by the following proposition, which is a necessary condition of the optimal control function: 

\begin{prop}\label{prop: NC COSC}
In COSC, the optimal control function $u^{*}$ satisfies the following equation: 
\begin{align}
	u^{*}(t,s)=\argmin_{u}\mcal{H}\left(t,s,u,w^{*}\right),\ a.s.\ ^\forall t\in[0,T],\ ^\forall s\in\mb{R}^{d_{s}},
	\label{eq: optimal control of COSC}
\end{align}
where $w^{*}(t,s)$ is the solution of HJB equation (\ref{eq: optimal HJB eq}). 
\end{prop}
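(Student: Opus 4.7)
The plan is to directly adapt the argument behind Theorem~\ref{theo: NC ML-POSC} to the COSC setting. The only structural difference is that in COSC the admissible control $u$ is a function of the full extended state $s$ rather than only the memory $z$, so the minimization over $u$ can be carried out pointwise in $s$ and no conditional expectation $\mathbb{E}_{p_{t}^{*}(x|z)}[\cdot]$ appears. Heuristically, one can view COSC as the limiting case $z=s$ of ML-POSC, in which the conditional density $p_{t}^{*}(x|z)$ collapses to a point mass and (\ref{eq: optimal control of ML-POSC}) reduces to (\ref{eq: optimal control of COSC}); I would instead give a self-contained derivation, since it is no harder.

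First, I would re-establish Lemma~\ref{lemm: J-J ML-POSC} for COSC controls. Inspecting the lemma's proof, the key manipulation is to differentiate $\mathbb{E}_{p(t,s)}[w'(t,s)]$ in time, insert the FP equation (\ref{eq: FP eq}) for the density $p$ driven by $u$, use the HJB equation (\ref{eq: HJB eq}) satisfied by $w'$ under $u'$, and apply the conjugacy (\ref{eq: conjugate}) between $\mathcal{L}_{u}$ and $\mathcal{L}_{u}^{\dagger}$. None of these steps uses measurability of $u$ with respect to a sub-$\sigma$-algebra, so the identity
\begin{align*}
J[u]-J[u'] = \int_{0}^{T}\mathbb{E}_{p(t,s)}\!\left[\mathcal{H}(t,s,u,w')-\mathcal{H}(t,s,u',w')\right]dt
\end{align*}
carries over verbatim to controls $u,u'$ that depend on $s$.

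Second, I would specialize to $u'=u^{*}$ and let $u$ be an arbitrary COSC control with associated density $p$. Optimality of $u^{*}$ yields
\begin{align*}
0 \;\le\; J[u]-J[u^{*}] \;=\; \int_{0}^{T}\mathbb{E}_{p(t,s)}\!\left[\mathcal{H}(t,s,u,w^{*})-\mathcal{H}(t,s,u^{*},w^{*})\right]dt.
\end{align*}
A needle-variation argument then delivers (\ref{eq: optimal control of COSC}): if $u^{*}(t,s)$ failed to minimize $\mathcal{H}(t,s,\cdot,w^{*})$ on a set $A\subset[0,T]\times\mathbb{R}^{d_{s}}$ of positive Lebesgue measure, I would pick a measurable selection $\hat{u}(t,s)\in\argmin_{u}\mathcal{H}(t,s,u,w^{*})$ on $A$ and set $\hat{u}=u^{*}$ off $A$. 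The integrand above would then be nonpositive everywhere and strictly negative on $A$, contradicting the inequality.

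The main obstacle is ensuring that pointwise improvement of the Hamiltonian on $A$ translates into strict improvement of $J$; this requires the perturbed FP density $p$ driven by $\hat{u}$ to put positive mass on $A$. Under the standard regularity and nondegeneracy hypotheses on $(b,\sigma,f,g,p_{0})$ used throughout the paper (in particular uniform ellipticity of $D=\sigma\sigma^{\top}$), $p(t,\cdot)$ is strictly positive for $t>0$, so the contradiction goes through and the necessary condition (\ref{eq: optimal control of COSC}) follows.
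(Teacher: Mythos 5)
Your argument is correct and rests on the same machinery as the paper: the fundamental identity of Lemma~\ref{lemm: J-J ML-POSC} (which, as you note and as the paper itself exploits in the proof of Proposition~\ref{prop: SC COSC}, applies verbatim to controls $u(t,s)$) followed by a spike-variation argument, so it is close in spirit to the paper's proof, which simply repeats the needle variation of Theorem~\ref{theo: NC ML-POSC} with the shrinking rectangle taken in $(t,s)$ rather than $(t,z)$ and with the conditional expectation dropped. The execution differs in one substantive way: the paper shrinks the perturbation region and passes to the limit, so the perturbed density collapses back to $p^{*}$ and the Hamiltonian gap appears multiplied by $p^{*}(t',s')$; the conclusion (\ref{eq: optimal control of COSC}) then holds ``a.s.''\ (with respect to $p^{*}$) with no nondegeneracy assumption, since points where $p^{*}$ vanishes are simply excluded. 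You instead perturb on a fixed positive-measure set $A$ via a measurable selection of the argmin, which requires two ingredients the paper's route does not: a selection theorem guaranteeing a measurable (and admissible, i.e.\ regular enough for the FP equation to be well posed) minimizer $\hat{u}$, and strict positivity of the perturbed density $\hat{p}$ on $A$, for which you invoke uniform ellipticity of $D=\sigma\sigma^{\top}$ --- a hypothesis the paper never states. What your version buys is a cleaner contradiction that avoids the paper's somewhat informal $\ve\to 0$ limit and its loose $ds\,dt$ bookkeeping; what the paper's version buys is freedom from the ellipticity and selection hypotheses and the a.s.\ qualifier obtained directly. If you keep your route, state the nondegeneracy and admissibility-of-selection assumptions explicitly, or weaken your conclusion to hold only $p^{*}$-a.s.\ on the set where the optimal density is positive.
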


\begin{proof}
The proof is almost the same with Theorem \ref{theo: NC ML-POSC}. 
\end{proof}

While the optimal control function of ML-POSC depends on the FP equation and the HJB equation, the optimal control function of COSC depends only on the HJB equation. 
From this nice property of COSC, Proposition \ref{prop: NC COSC} is a necessary and sufficient condition without the convexity of the expected Hamiltonian as follows: 

\begin{prop}\label{prop: SC COSC}
If the control function $u^{*}$ satisfies (\ref{eq: optimal control of COSC}), it is the optimal control function of COSC. 
\end{prop}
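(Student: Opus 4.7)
The plan is to follow exactly the structure of the proof of Proposition \ref{prop: SC ML-POSC}, but to exploit the fact that in COSC the control $u(t,s)$ depends on the full extended state rather than only on $z$, so that the pointwise minimization of $\mcal{H}$ in $u$ can be used directly, without ever invoking convexity in $p$.

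First, I would write down the COSC analog of Lemma \ref{lemm: J-J ML-POSC}. Exactly the same derivation (apply It\^o's formula to $w^{*}(t,s_t)$ along the trajectory driven by an arbitrary admissible $u$, take expectation with respect to the corresponding FP density $p$, and substitute the HJB equation (\ref{eq: optimal HJB eq}) for $w^{*}$) yields
\begin{align}
J[u]-J[u^{*}]
&=\int_{0}^{T}\mb{E}_{p(t,s)}\!\left[\mcal{H}(t,s,u(t,s),w^{*})-\mcal{H}(t,s,u^{*}(t,s),w^{*})\right]dt.
\end{align}
This step is essentially identical to the proof of Lemma \ref{lemm: J-J ML-POSC}; the only change is that $u$ and $u^{*}$ are now functions of $s$, so no conditional expectation over $x$ given $z$ ever appears.

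Second, I would invoke the hypothesis (\ref{eq: optimal control of COSC}): by definition, for almost every $t\in[0,T]$ and every $s\in\mb{R}^{d_{s}}$,
\begin{align}
\mcal{H}(t,s,u^{*}(t,s),w^{*})\le \mcal{H}(t,s,u(t,s),w^{*}).
\end{align}
Hence the integrand in the previous display is pointwise nonnegative in $s$, so its expectation under $p(t,s)$ is nonnegative, and integrating in $t$ gives $J[u]\ge J[u^{*}]$ for every admissible $u$. Since $u$ was arbitrary, $u^{*}$ is optimal.

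I do not expect a real obstacle here; the point worth emphasizing in the write-up is \emph{why} the convexity assumption of Proposition \ref{prop: SC ML-POSC} is not needed. In ML-POSC the control depends only on $z$, so the difference above must be conditioned on $z$ and the relevant pointwise inequality involves $\mb{E}_{p^{*}(x|z)}[\mcal{H}]$ rather than $\mb{E}_{p(x|z)}[\mcal{H}]$; bridging the two conditional densities requires convexity in $p$. In COSC the minimization in (\ref{eq: optimal control of COSC}) is pointwise in $s$ and involves no density at all, so nonnegativity of the integrand is preserved under \emph{any} $p(t,s)$ — in particular under the FP density generated by the competitor $u$ — and convexity plays no role.
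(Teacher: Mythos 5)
Your proposal is correct and is essentially the paper's own argument: the paper applies Lemma \ref{lemm: J-J ML-POSC} (whose proof never uses the $z$-dependence of the control, so it holds verbatim for $s$-dependent feedback) with $u'=u^{*}$ to get $J[u]-J[u^{*}]=\int_{0}^{T}\mb{E}_{p(t,s)}\left[\mcal{H}(t,s,u,w^{*})-\mcal{H}(t,s,u^{*},w^{*})\right]dt$, and then uses the pointwise minimality in (\ref{eq: optimal control of COSC}) to conclude the integrand is nonnegative under any density, hence $J[u]\geq J[u^{*}]$. Your closing remark on why convexity is dispensable here, unlike in Proposition \ref{prop: SC ML-POSC}, matches the paper's discussion and is a fine addition but not a different proof route.
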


\begin{proof}
The proof is shown in \ref{appendix: proof: theo: SC COSC}. 
\end{proof}

Proposition \ref{prop: SC COSC} is consistent with the conventional result of COSC \cite{yong_stochastic_1999}. 
Unlike ML-POSC and MFSC, COSC can be solved by the Bellman's dynamic programming principle on the state space. 
In COSC, the Pontryagin's minimum principle on the probability density function space is equivalent with the Bellman's dynamic programming principle on the state space. 
Because the Bellman's dynamic programming principle on the state space is a necessary and sufficient condition, the Pontryagin's minimum principle on the probability density function space also becomes a necessary and sufficient condition.

\section{Forward-Backward Sweep Method}\label{sec: FBSM}
In this section, we propose FBSM to ML-POSC, and then prove the convergence. 
While the convergence of FBSM is not guaranteed in the deterministic control \cite{krylov_method_1963,chernousko_method_1982,lenhart_optimal_2007,mcasey_convergence_2012} and MFSC \cite{carlini_semi-lagrangian_2013,carlini_fully_2014,carlini_semi-lagrangian_2015,lauriere_numerical_2021}, it is guaranteed in ML-POSC because the coupling of HJB-FP equations is limited only to the optimal control function in ML-POSC. 

\subsection{Forward-Backward Sweep Method}
In this subsection, we propose FBSM to ML-POSC, which is summarized in Algorithm \ref{alg: FBSM}. 
FBSM is an algorithm to compute the forward FP equation and the backward HJB equation alternately. 
More specifically, in the initial step of FBSM, we initialize the control function $u_{0:T-dt}^{0}$ and compute the FP equation $p_{0:T}^{0}$ forward in time from the initial condition. 
In the backward step, we compute the HJB equation $w_{0:T}^{1}$ backward in time from the terminal condition and simultaneously compute the control function $u_{0:T-dt}^{1}$ by minimizing the conditional expected Hamiltonian. 
In the forward step, we compute the FP equation $p_{0:T}^{2}$ forward in time from the initial condition and simultaneously compute the control function $u_{0:T-dt}^{2}$ in the similar way as the backward step. 
By iterating the backward step and the forward step, the objective function $J[u_{0:T-dt}^{k}]$ monotonically decreases and finally converges to the local minimum at which the control function $u_{0:T-dt}^{k}$ satisfies the Pontryagin's minimum principle (Theorem \ref{theo: NC ML-POSC}). 

The Pontryagin's minimum principle is a necessary condition of the optimal control function, not a sufficient condition. 
Therefore, the control function obtained by FBSM is not necessarily the global optimum except in the case where the expected Hamiltonian is convex (Proposition \ref{prop: SC ML-POSC}). 
However, the control function obtained by FBSM is expected to be superior to most of control functions because it is locally optimal. 

\begin{algorithm}[t]
\caption{Forward-Backward Sweep Method (FBSM)}
\label{alg: FBSM}
\begin{algorithmic}
\STATE //--- Initial step ---//
\STATE $k\leftarrow0$
\STATE $p_{0}^{k}(s)\leftarrow p_{0}(s)$
\FOR{$t=0$ to $T-dt$}
	\STATE Initialize $u_{t}^{k}(z)$
	\STATE $p_{t+dt}^{k}(s)\leftarrow p_{t}^{k}(s)+\mcal{L}_{u_{t}^{k}}^{\dag}p_{t}^{k}(s)dt$
\ENDFOR
\WHILE{$J[u_{0:T-dt}^{k}]$ do not converge}
	\IF{$k$ is even}
		\STATE //--- Backward step ---//
		\STATE $w_{T}^{k+1}(s)\leftarrow g(s)$
		\FOR{$t=T-dt$ to $0$}
			\STATE $u_{t}^{k+1}(z)\leftarrow \argmin_{u}\mb{E}_{p_{t}^{k}(x|z)}\left[\mcal{H}(t,s,u,w_{t+dt}^{k+1})\right]$
			\STATE $w_{t}^{k+1}(s)\leftarrow w_{t+dt}^{k+1}(s)+\mcal{H}(t,s,u_{t}^{k+1},w_{t+dt}^{k+1})dt$
		\ENDFOR
	\ELSE
		\STATE //--- Forward step ---//
		\STATE $p_{0}^{k+1}(s)\leftarrow p_{0}(s)$
		\FOR{$t=0$ to $T-dt$}
			\STATE $u_{t}^{k+1}(z)\leftarrow \argmin_{u}\mb{E}_{p_{t}^{k+1}(x|z)}\left[\mcal{H}(t,s,u,w_{t+dt}^{k})\right]$
			\STATE $p_{t+dt}^{k+1}(s)\leftarrow p_{t}^{k+1}(s)+\mcal{L}_{u_{t}^{k+1}}^{\dag}p_{t}^{k+1}(s)dt$
		\ENDFOR
	\ENDIF
	\STATE $k\leftarrow k+1$
\ENDWHILE
\RETURN $u_{0:T-dt}^{k}$
\end{algorithmic}
\end{algorithm}

\subsection{Convergence}
FBSM has been used in the deterministic control \cite{krylov_method_1963,chernousko_method_1982,lenhart_optimal_2007,mcasey_convergence_2012} and MFSC \cite{carlini_semi-lagrangian_2013,carlini_fully_2014,carlini_semi-lagrangian_2015,lauriere_numerical_2021}. 
However, the convergence of FBSM for these cases is not guaranteed because the backward dynamics depends on the forward dynamics even without the optimal control function (Figure \ref{fig: COSC ML-POSC MFSC}(c,d)). 
In contrast, the convergence of FBSM is guaranteed in ML-POSC because the backward HJB equation does not depend on the forward FP equation without the optimal control function (Figure \ref{fig: COSC ML-POSC MFSC}(b)). 

We firstly show the following lemma, which plays a key role in the convergence of FBSM: 
\begin{lemm}\label{lemm: current optimal control}
Given $u_{0:t-dt,t+dt:T-dt}:=\{u_{0},...,u_{t-dt},u_{t+dt},...,u_{T-dt}\}$, $u_{t}^{*}$ is defined by
\begin{align}
	u_{t}^{*}(z):=\argmin_{u_{t}}\mb{E}_{p_{t}(x|z)}\left[\mcal{H}\left(t,s,u_{t},w_{t+dt}\right)\right],
	\label{eq: current optimal control of ML-POSC H}
\end{align}
where $p_{t}(x|z):=p_{t}(s)/\int p_{t}(s)dx$ is the conditional probability density function of the state $x$ given the memory $z$, and $p_{t}(s)$ is the solution of the following FP equation: 
\begin{align}
	p_{t+dt}(s)=p_{t}(s)+\mcal{L}_{u_{t}}^{\dagger}p_{t}(s)dt,
	\label{eq: time-discretized FP eq}
\end{align}
where $p_{0}(s)$. 
$w_{t}(s)$ is the solution of the following HJB equation: 
\begin{align}
	w_{t}(s)=w_{t+dt}(s)+\mcal{H}\left(t,s,u_{t},w_{t+dt}\right)dt, 
	\label{eq: time-discretized HJB eq}
\end{align}
where $w_{T}(s)=g(s)$. 
Then $u_{t}^{*}$ satisfies the following equation in ML-POSC: 
\begin{align}
	u_{t}^{*}=\argmin_{u_{t}}J[u_{0:T-dt}]. 
	\label{eq: current optimal control of ML-POSC J}
\end{align}
\end{lemm}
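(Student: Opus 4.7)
The plan is to leverage Lemma \ref{lemm: J-J ML-POSC} (adapted to the time-discrete setting of Algorithm \ref{alg: FBSM}) on two control sequences that agree everywhere except at the single instant $t$. First I would establish the time-discrete analog of Lemma \ref{lemm: J-J ML-POSC}:
\[
J[u] - J[u'] = dt \sum_{\tau} \Bigl(\mb{E}_{p_{\tau}(s)}\bigl[\mcal{H}(\tau, s, u_{\tau}, w'_{\tau+dt})\bigr] - \mb{E}_{p_{\tau}(s)}\bigl[\mcal{H}(\tau, s, u'_{\tau}, w'_{\tau+dt})\bigr]\Bigr),
\]
where the sum runs over $\tau\in\{0,dt,\ldots,T-dt\}$, the trajectory $p_{\tau}$ is driven by $u$ via (\ref{eq: time-discretized FP eq}), and $w'_{\tau}$ is driven by $u'$ via (\ref{eq: time-discretized HJB eq}). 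I expect this to follow by the direct discrete analog of the proof of (\ref{eq: J-J ML-POSC}): telescope $\sum_{\tau}\bigl(\mb{E}_{p_{\tau+dt}}[w'_{\tau+dt}] - \mb{E}_{p_{\tau}}[w'_{\tau}]\bigr)$, substitute the HJB and FP Euler updates, and invoke the conjugacy (\ref{eq: conjugate}) between $\mcal{L}_{u_{\tau}}$ and $\mcal{L}_{u_{\tau}}^{\dag}$.

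Next I would fix the partial sequence $u_{0:t-dt,\,t+dt:T-dt}$, let $u'$ denote the full sequence obtained by inserting $u_{t}^{*}$ (from (\ref{eq: current optimal control of ML-POSC H})) at time $t$, and let $u$ denote the sequence obtained by inserting an arbitrary alternative $u_{t}$ instead. Because $u_{\tau}=u'_{\tau}$ for all $\tau<t$, iterating (\ref{eq: time-discretized FP eq}) forward from the common initial condition gives $p_{\tau}=p'_{\tau}$ for every $\tau\le t$; because $u_{\tau}=u'_{\tau}$ for all $\tau>t$, iterating (\ref{eq: time-discretized HJB eq}) backward from the common terminal condition gives $w_{\tau}=w'_{\tau}$ for every $\tau>t$, and in particular $w'_{t+dt}=w_{t+dt}$. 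In the sum above, every term with $\tau\neq t$ then vanishes because $u_{\tau}=u'_{\tau}$ there, leaving
\[
J[u] - J[u'] = dt\,\Bigl(\mb{E}_{p_{t}(s)}\bigl[\mcal{H}(t,s,u_{t},w_{t+dt})\bigr] - \mb{E}_{p_{t}(s)}\bigl[\mcal{H}(t,s,u_{t}^{*},w_{t+dt})\bigr]\Bigr).
\]

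Finally I would exploit the ML-POSC restriction that $u_{t}$ depends only on $z$ to factorize $\mb{E}_{p_{t}(s)}[\cdot]=\mb{E}_{p_{t}(z)}\mb{E}_{p_{t}(x|z)}[\cdot]$; the pointwise minimizer in $z$ of the inner conditional expectation is exactly $u_{t}^{*}(z)$ by its definition (\ref{eq: current optimal control of ML-POSC H}), so $J[u]\ge J[u']$ for every admissible $u_{t}$, which yields (\ref{eq: current optimal control of ML-POSC J}).

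The hard part will be carrying the proof of Lemma \ref{lemm: J-J ML-POSC} faithfully into the discrete-time setting, in particular keeping the conjugacy on $\mcal{L}_{u_{\tau}}^{\dag}$ consistent with the forward Euler step in (\ref{eq: time-discretized FP eq}) when rearranging the telescoped sum. After that identity is in hand, the essential observation is the decoupling that a single-time perturbation freezes $p$ on $[0,t]$ and $w$ on $(t,T]$; this is precisely the structural feature that fails in the deterministic-control and MFSC settings of Figure \ref{fig: COSC ML-POSC MFSC}(c,d), and it is what reduces the global minimization over $u_{t}$ to a pointwise minimization of the conditional expected Hamiltonian.
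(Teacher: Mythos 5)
Your argument is correct and is essentially the paper's own second proof (\ref{appendix: proof: lemm: current optimal control ver 2}): apply Lemma \ref{lemm: J-J ML-POSC} to two control sequences differing only at time $t$, observe that all contributions at other times cancel so only the time-$t$ expected-Hamiltonian difference survives, and then use that $u_{t}$ depends only on $z$ to reduce to the pointwise conditional minimization defining $u_{t}^{*}$. Your extra care with the time-discrete analog of Lemma \ref{lemm: J-J ML-POSC} is a faithful (slightly more explicit) version of what the paper does implicitly, so no substantive difference remains.
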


\begin{proof}
This lemma is proven by the time discretization method in \ref{appendix: proof: lemm: current optimal control ver 1} and also by the similar way as the Pontyragin's minimum principle in \ref{appendix: proof: lemm: current optimal control ver 2}. 
While the former is more intuitive, the latter is more useful for comparing ML-POSC with the deterministic control and MFSC. 
\end{proof}

Importantly, $w_{t+dt}$ does not depend on $u_{t}$ in ML-POSC (Figure \ref{fig: POSC MFC pert}(a)) while $\lambda_{t+dt}$ and $w_{t+dt}$ depend on $u_{t}$ in the deterministic control (Figure \ref{fig: POSC MFC pert}(b)) and MFSC (Figure \ref{fig: POSC MFC pert}(c)), respectively. 
Therefore, $u_{t}^{*}$ can be obtained without modifying $w_{t+dt}$ in ML-POSC, which is essentially different from the deterministic control and MFSC. 
From this nice property of ML-POSC, FBSM becomes monotonic as follows: 

\begin{lemm}\label{lemm: tight monotonicity of FBSM}
In FBSM of ML-POSC, the objective function is monotonically non-increasing with respect to the update of the control function at each time step. 
More specifically, 
\begin{align}
	J[u_{0:t-dt}^{k},u_{t:T-dt}^{k+1}]\leq J[u_{0:t}^{k},u_{t+dt:T-dt}^{k+1}]
	\label{eq: tight monotonicity of backward step}
\end{align}
is satisfied in the backward step, and 
\begin{align}
	J[u_{0:t-dt}^{k+1},u_{t:T-dt}^{k}]\geq J[u_{0:t}^{k+1},u_{t+dt:T-dt}^{k}]
	\label{eq: tight monotonicity of forward step}
\end{align}
is satisfied in the forward step.
\end{lemm}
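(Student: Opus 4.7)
The plan is to reduce each per-time-step update in Algorithm~\ref{alg: FBSM} to a single application of Lemma~\ref{lemm: current optimal control}, exploiting the fact that in ML-POSC the discrete-time FP recursion is driven only by past controls while the discrete-time HJB recursion is driven only by future controls. Under this observation the objects $p_t$ and $w_{t+dt}$ that appear in the statement of Lemma~\ref{lemm: current optimal control} will coincide, at each time step, with the ones already stored by the algorithm, so the argmin furnished by the lemma matches the algorithmic update verbatim and the monotonicity follows immediately from (\ref{eq: current optimal control of ML-POSC J}).

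For the backward step I fix the control sequence $\hat{u}_{0:T-dt}$ by setting $\hat{u}_\tau := u_\tau^{k}$ for $\tau<t$ and $\hat{u}_\tau := u_\tau^{k+1}$ for $\tau>t$, leaving $\hat{u}_t$ free. Since (\ref{eq: time-discretized FP eq}) integrates forward from $p_0$, the density $p_t$ in Lemma~\ref{lemm: current optimal control} depends on $\hat{u}$ only through $\hat{u}_{0:t-dt}=u_{0:t-dt}^{k}$ and hence equals the $p_t^{k}$ stored from the previous forward pass. Since (\ref{eq: time-discretized HJB eq}) integrates backward from $g$, the value $w_{t+dt}$ in Lemma~\ref{lemm: current optimal control} depends on $\hat{u}$ only through $\hat{u}_{t+dt:T-dt}=u_{t+dt:T-dt}^{k+1}$ and hence equals $w_{t+dt}^{k+1}$ just produced by the backward sweep. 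The optimizer (\ref{eq: current optimal control of ML-POSC H}) therefore coincides with the algorithmic update rule for $u_t^{k+1}$, and Lemma~\ref{lemm: current optimal control} yields $J[u_{0:t-dt}^{k},u_t^{k+1},u_{t+dt:T-dt}^{k+1}] \leq J[u_{0:t-dt}^{k},u_t^{k},u_{t+dt:T-dt}^{k+1}]$, which is exactly (\ref{eq: tight monotonicity of backward step}).

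The forward step is treated symmetrically: fix $\hat{u}_\tau := u_\tau^{k+1}$ for $\tau<t$ and $\hat{u}_\tau := u_\tau^{k}$ for $\tau>t$, so that the $p_t$ and $w_{t+dt}$ of Lemma~\ref{lemm: current optimal control} become the freshly computed $p_t^{k+1}$ and the inherited $w_{t+dt}^{k}$ respectively; the argmin again matches the algorithmic $u_t^{k+1}$, and (\ref{eq: current optimal control of ML-POSC J}) furnishes (\ref{eq: tight monotonicity of forward step}).

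The crux, and the feature that distinguishes ML-POSC from the deterministic control and MFSC, is the one-sided dependence noted in the paragraph after Lemma~\ref{lemm: current optimal control}: $w_{t+dt}$ has no functional coupling to $u_t$ or to any earlier control, so the HJB object produced by one sweep can be re-used verbatim in the next sweep without any correction. The only mildly subtle point in the argument is the careful bookkeeping of the superscripts $k$ and $k+1$ together with the verification that the discrete-time recursions (\ref{eq: time-discretized FP eq})--(\ref{eq: time-discretized HJB eq}) inherit the directional dependence of their continuous-time counterparts; once this is in place the result is a direct invocation of Lemma~\ref{lemm: current optimal control}.
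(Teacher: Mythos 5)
Your proposal is correct and follows essentially the same route as the paper: both proofs establish each inequality by applying Lemma \ref{lemm: current optimal control} to the mixed control sequence (old controls before time $t$, new controls after), so that the $p_{t}$ and $w_{t+dt}$ of that lemma coincide with the quantities stored by Algorithm \ref{alg: FBSM} and the algorithmic argmin is exactly the per-time-step minimizer of $J$. Your write-up merely spells out the superscript bookkeeping and the one-sided dependence of the discrete FP/HJB recursions, which the paper's proof leaves implicit.
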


\begin{proof}
The proof is shown in \ref{appendix: proof: lemm: tight monotonicity of FBSM}. 
\end{proof}

\begin{theo}\label{theo: monotonicity of FBSM}
In FBSM of ML-POSC, the objective function is monotonically non-increasing with respect to the update of the control function at each iteration step. 
More specifically, 
\begin{align}
	J[u_{0:T-dt}^{k+1}]\leq J[u_{0:T-dt}^{k}]
\end{align}
is satisfied. 
\end{theo}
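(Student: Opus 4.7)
The plan is to derive the theorem as a straightforward telescoping consequence of Lemma \ref{lemm: tight monotonicity of FBSM}. That lemma already guarantees that every individual time-step update in either the backward or forward sweep is non-increasing for $J$; the theorem simply aggregates these local guarantees over an entire iteration.

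First I would treat the case where $k$ is even, so that iteration $k+1$ is a backward step. I would start from $J[u_{0:T-dt}^{k}]$, which equals $J[u_{0:T-dt}^{k}, u_{T:T-dt}^{k+1}]$ in the notation of the lemma (the ``$k+1$'' block being empty at $t=T$). Applying \eqref{eq: tight monotonicity of backward step} at $t=T-dt$, then at $t=T-2dt$, and so on down to $t=0$, each step replaces one more component $u_{t}^{k}$ by the freshly computed $u_{t}^{k+1}$ without increasing $J$. Telescoping these inequalities yields
\begin{equation*}
J[u_{0:T-dt}^{k+1}]
\;\leq\; J[u_{0}^{k}, u_{dt:T-dt}^{k+1}]
\;\leq\; \cdots
\;\leq\; J[u_{0:T-2dt}^{k}, u_{T-dt:T-dt}^{k+1}]
\;\leq\; J[u_{0:T-dt}^{k}],
\end{equation*}
which is the desired inequality.

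For odd $k$, iteration $k+1$ is a forward step, and I would use \eqref{eq: tight monotonicity of forward step} in the analogous way. Starting from $J[u_{0:T-dt}^{k}]$, I would apply the inequality successively at $t=0, dt, 2dt, \ldots, T-dt$, each time swapping one more $u_{t}^{k}$ for $u_{t}^{k+1}$ and never increasing $J$, and again telescope to obtain $J[u_{0:T-dt}^{k+1}] \leq J[u_{0:T-dt}^{k}]$.

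I do not anticipate a real obstacle here: once Lemma \ref{lemm: tight monotonicity of FBSM} is in place, the main work is purely bookkeeping of indices to make sure that the ``hybrid'' control sequences that appear in the chain match exactly on both sides of adjacent inequalities. The only subtlety worth flagging is that the lemma requires $u_{t}^{k+1}$ to be computed from the state of the surrounding controls at the moment of the swap, which is precisely how Algorithm \ref{alg: FBSM} defines the updates (backward sweep uses $p^{k}$ and the most recent $w^{k+1}$; forward sweep uses $w^{k}$ and the most recent $p^{k+1}$); I would briefly point this out so that the hypotheses of the lemma are visibly satisfied at each link of the chain.
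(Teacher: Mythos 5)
Your proposal is correct and matches the paper's argument: the paper's proof of this theorem is literally ``It is obvious from Lemma \ref{lemm: tight monotonicity of FBSM},'' and your telescoping of the per-time-step inequalities (\ref{eq: tight monotonicity of backward step}) and (\ref{eq: tight monotonicity of forward step}) over $t$ is exactly the bookkeeping that remark leaves implicit. Your added note that the algorithm's updates are computed with respect to the current hybrid control sequence, so the lemma's hypotheses hold at each link, is the right (and only) subtlety to flag.
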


\begin{proof}
It is obvious from Lemma \ref{lemm: tight monotonicity of FBSM}. 
\end{proof}

Therefore, if $J[u_{0:T-dt}]$ has a lower bound, FBSM is guaranteed to converge to the local minimum in ML-POSC. 
At the local minimum, the Pontryagin's minimum principle is satisfied as follows: 

\begin{theo}\label{theo: convergence of FBSM}
Assume that if the candidate of $u_{t}^{k+1}(z)$ includes $u_{t}^{k}(z)$, then set $u_{t}^{k+1}(z)$ at $u_{t}^{k}(z)$. If
\begin{align}
	J[u_{0:T-dt}^{k+1}]=J[u_{0:T-dt}^{k}]
\end{align}
holds, $u_{0:T-dt}^{k+1}$ satisfies the Pontryagin's minimum principle (Theorem \ref{theo: NC ML-POSC}), which is a necessary condition of the optimal control function. 
\end{theo}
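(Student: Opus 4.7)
The plan is to combine Lemma \ref{lemm: tight monotonicity of FBSM} and Lemma \ref{lemm: current optimal control}. The former supplies a chain of per-time-step inequalities whose telescoping yields the global monotonicity, while the latter identifies each per-time-step update as the exact minimizer of $J$ in one coordinate. Under the hypothesis $J[u_{0:T-dt}^{k+1}] = J[u_{0:T-dt}^{k}]$, every link in the telescoping chain must in fact be an equality, so replacing $u_{t}^{k}$ by $u_{t}^{k+1}$ does not change the value of $J$ when the remaining coordinates are frozen at their current FBSM states.

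I would focus on the backward step (the case $k$ even); the forward step is handled identically with the roles of $p$ and $w$ swapped. For each $t$, freeze the other coordinates at $u_{0:t-dt}^{k}$ and $u_{t+dt:T-dt}^{k+1}$. Lemma \ref{lemm: current optimal control} gives
\begin{align*}
u_{t}^{k+1}(z) \in \argmin_{u_{t}} J[u_{0:t-dt}^{k}, u_{t}, u_{t+dt:T-dt}^{k+1}],
\end{align*}
and the telescoping equality shows that $u_{t}^{k}$ also attains this minimum. The tie-breaking hypothesis stated in the theorem then forces $u_{t}^{k+1}(z) = u_{t}^{k}(z)$ for every $t$ and every $z$.

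Next I would read off the three conditions of Theorem \ref{theo: NC ML-POSC} directly from the equality $u^{k+1} = u^{k}$ together with the construction rules of Algorithm \ref{alg: FBSM}. Since the controls at iterations $k$ and $k+1$ agree, the forward FP sweep that produced $p^{k}$ and the backward HJB sweep that produced $w^{k+1}$ are both driven by the common control $u^{k+1}$; hence $p^{k}$ satisfies the FP equation (\ref{eq: optimal FP eq}) and $w^{k+1}$ satisfies the HJB equation (\ref{eq: optimal HJB eq}) with $u^{*} = u^{k+1}$. The definition of $u_{t}^{k+1}$ in Algorithm \ref{alg: FBSM} is precisely the argmin condition (\ref{eq: optimal control of ML-POSC}) evaluated at this consistent pair $(p^{k}, w^{k+1})$, which completes the verification of the Pontryagin's minimum principle.

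The main obstacle I anticipate is the bookkeeping across the alternating backward/forward convention: one must check in each case that the "current" density and value function used in the argmin actually coincide, as functions, with the FP and HJB solutions driven by the common control $u^{k+1} = u^{k}$. This is where the tie-breaking hypothesis does essential work; without it one could imagine $u_{t}^{k+1} \neq u_{t}^{k}$ while $J$ is preserved, leaving the argmin condition defined against a $(p, w)$ pair built from inconsistent controls and thereby failing to close the Pontryagin system.
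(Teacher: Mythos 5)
Your proposal is correct and follows essentially the same route as the paper's proof: use Lemma \ref{lemm: tight monotonicity of FBSM} to turn the global equality into per-time-step equalities, invoke Lemma \ref{lemm: current optimal control} plus the tie-breaking assumption to conclude $u^{k+1}_{0:T-dt}=u^{k}_{0:T-dt}$, and then note that the FP and HJB sweeps are driven by this common control so that the argmin update in Algorithm \ref{alg: FBSM} is exactly condition (\ref{eq: optimal control of ML-POSC}). The only cosmetic difference is that you argue the backward step while the paper argues the forward step, with the other case handled symmetrically in both.
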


\begin{proof}
The proof is shown in \ref{appendix: proof: theo: convergence of FBSM}. 
\end{proof}

Therefore, unlike the deterministic control and MFSC, in FBSM of ML-POSC, the objective function $J[u_{0:T-dt}^{k}]$ monotonically decreases and finally converges to the local minimum at which the control function $u_{0:T-dt}^{k}$ satisfies the Pontryagin's minimum principle. 

\begin{figure}[t]
\begin{center}
	\includegraphics[width=100mm]{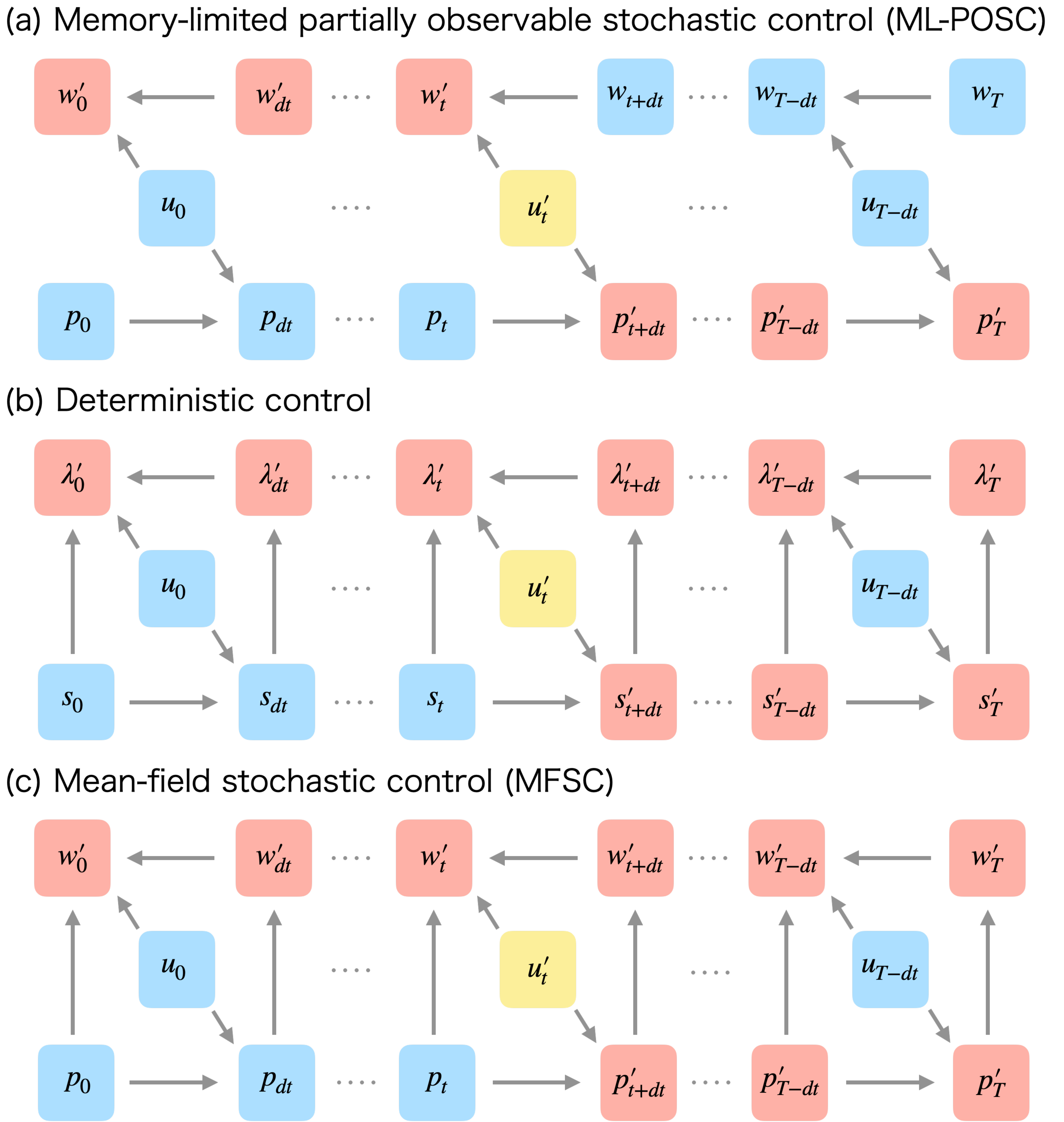}
	\caption{
	Schematic diagram of the effect of updating the control function on the forward and backward dynamics in 
	(a) ML-POSC, 
	(b) deterministic control, 
	and (c) MFSC. 
	$w_{0:T}$, $p_{0:T}$, $\lambda_{0:T}$, and $s_{0:T}$ are the solutions of the HJB equation, the FP equation, the adjoint equation, and the state equation, respectively. 
	$u_{0:T-dt}$ is the given control function. 
	(a) In ML-POSC, the update from $u_{t}$ to $u_{t}'$ does not change $p_{0:t}$ and $w_{t+dt:T}$. 
	From this property, the convergence of FBSM is guaranteed.  
	(b) In deterministic control, the update from $u_{t}$ to $u_{t}'$ changes $\lambda_{t+dt:T}$ to $\lambda_{t+dt:T}'$ because the adjoint equation depends on the state equation. 
	Because FBSM does not take the change of $\lambda_{t+dt:T}$ into account, the convergence of FBSM is not guaranteed. 
	(c) In MFSC, the update from $u_{t}$ to $u_{t}'$ changes $w_{t+dt:T}$ to $w_{t+dt:T}'$ because the HJB equation depends on the FP equation. 
	Because FBSM does not take the change of $w_{t+dt:T}$ into account, the convergence of FBSM is not guaranteed. 
	}
	\label{fig: POSC MFC pert}
\end{center}
\end{figure}

\section{Linear-Quadratic-Gaussian Problem}\label{sec: LQG}
In this section, we apply FBSM into the LQG problem of ML-POSC \cite{tottori_memory-limited_2022}. 
In the LQG problem of ML-POSC, the system of HJB-FP equations is reduced from partial differential equations to ordinary differential equations. 

\subsection{Problem Formulation}
In the LQG problem of ML-POSC, the extended state SDE (\ref{eq: extended state SDE}) is given as follows \cite{tottori_memory-limited_2022}:
\begin{align}
	ds_{t}=\left(A(t)s_{t}+B(t)u_{t}\right)dt+\sigma(t)d\omega_{t},\label{SDE of LQG}
\end{align}
where $s_{0}$ obeys the Gaussian distribution $p_{0}(s_{0}):=\mcal{N}\left(s_{0}\left|\mu_{0},\Lambda_{0}\right.\right)$ where $\mu_{0}$ is the mean vector and  $\Lambda_{0}$ is the precision matrix. 
The objective function (\ref{eq: OF of GML-POSC}) is given as follows:  
\begin{align}
	J[u]:=\mb{E}_{p(s_{0:T};u)}\left[\int_{0}^{T}\left(s_{t}^{\top}Q(t)s_{t}+u_{t}^{\top}R(t)u_{t}\right)dt+s_{T}^{\top}Ps_{T}\right],\label{OF of LQG}
\end{align}
where $Q(t)\succeq O$, $R(t)\succ O$, and $P\succeq O$. 
The LQG problem of ML-POSC is the problem to find the optimal control function $u^{*}$ that minimize the objective function $J[u]$ as follows: 
\begin{align}	
	u^{*}=\argmin_{u}J[u].
\end{align} 

\subsection{Pontryagin's Minimum Principle}
In the LQG problem of ML-POSC, the Pontryagin's minimum principle (Theorem \ref{theo: NC ML-POSC}) can be calculated as follows \cite{tottori_memory-limited_2022}:
 
\begin{prop}[\cite{tottori_memory-limited_2022}]\label{prop: optimal control of LQG in ML-POSC}
In the LQG problem of ML-POSC, the optimal control function satisfies the following equation: 
\begin{align}
	u^{*}(t,z)=-R^{-1}B^{\top}\left(\Pi K(\Lambda)(s-\mu)+\Psi\mu\right),
	\label{eq: optimal control of LQG}
\end{align}
where $K(\Lambda)$ is defined as follows:
\begin{align}
	K(\Lambda):=\left(\begin{array}{cc}
		O&\Lambda_{xx}^{-1}\Lambda_{xz}\\ 
		O&I\\
	\end{array}\right),
	\label{eq: inference gain}
\end{align}
where $\mu(t)$ and $\Lambda(t)$ are the mean vector and the precision matrix of the extended state, respectively, which correspond to the solution of the FP equation (\ref{eq: optimal FP eq}).
We note that $\mb{E}_{p_{t}(z|x)}\left[s\right]=K(\Lambda)(s-\mu)+\mu$ is satisfied. 
$\mu(t)$ and $\Lambda(t)$ are the solutions of the following ordinary differential equations (ODEs): 
\begin{align}
	\dot{\mu}&=\left(A-BR^{-1}B^{\top}\Psi\right)\mu,\label{eq: ODE of mu}\\
	\dot{\Lambda}&=-\left(A-BR^{-1}B^{\top}\Pi K(\Lambda)\right)^{\top}\Lambda-\Lambda\left(A-BR^{-1}B^{\top}\Pi K(\Lambda)\right)-\Lambda\sigma\sigma^{\top}\Lambda,\label{eq: ODE of Lambda}
\end{align}
where $\mu(0)=\mu_{0}$ and $\Lambda(0)=\Lambda_{0}$. 
$\Psi(t)$ and $\Pi(t)$ are the control gain matrices of the deterministic and stochastic extended state, respectively, which correspond to the solution of the HJB equation (\ref{eq: optimal HJB eq}).
$\Psi(t)$ and $\Pi(t)$ are the solutions of the following ODEs: 
\begin{align}
	-\dot{\Psi}&=Q+A^{\top}\Psi+\Psi A -\Psi BR^{-1}B^{\top}\Psi,\label{eq: ODE of Psi}\\
	-\dot{\Pi}&=Q+A^{\top}\Pi+\Pi A-\Pi BR^{-1}B^{\top}\Pi+(I-K(\Lambda))^{\top}\Pi BR^{-1}B^{\top}\Pi (I-K(\Lambda)), \label{eq: ODE of Pi}
\end{align}
where $\Psi(T)=\Pi(T)=P$. 
The ODE of $\Psi$ (\ref{eq: ODE of Psi}) is the Riccati equation \cite{yong_stochastic_1999,bensoussan_estimation_2018}, which also appears in the LQG problem of COSC. 
In contrast, the ODE of $\Pi$ (\ref{eq: ODE of Pi}) is the partially observable Riccati equation \cite{tottori_memory-limited_2022}, which appears only in the LQG problem of ML-POSC. 
\end{prop}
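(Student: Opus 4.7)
The plan is to specialize the Pontryagin's minimum principle (Theorem \ref{theo: NC ML-POSC}) to the LQG setting by exploiting the Gaussian structure. In the LQG case the Hamiltonian becomes
\[
\mcal{H}(t,s,u,w) = s^{\top}Qs + u^{\top}Ru + (As+Bu)^{\top}\nabla_{s}w + \tfrac{1}{2}\operatorname{tr}\!\bigl(\sigma\sigma^{\top}\nabla_{s}^{2}w\bigr),
\]
which is strictly convex in $u$ with Hessian $2R$. Taking the conditional expectation under $p^{*}_{t}(x|z)$ and minimizing pointwise in $z$ gives
\[
u^{*}(t,z) = -\tfrac{1}{2}R^{-1}B^{\top}\,\mb{E}_{p^{*}_{t}(x|z)}\!\bigl[\nabla_{s}w^{*}(t,s)\bigr].
\]

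Because $\sigma$ depends on neither $s$ nor $u$, and because the closed-loop drift will turn out to be affine in $s$, the FP equation (\ref{eq: optimal FP eq}) preserves Gaussianity, so $p^{*}_{t}(s)$ is Gaussian with mean $\mu(t)$ and precision $\Lambda(t)$. A standard Gaussian conditioning calculation then yields $\mb{E}_{p^{*}_{t}(x|z)}[s] = K(\Lambda)(s-\mu)+\mu$, and the block structure of $K(\Lambda)$ ensures that the right-hand side depends on $z$ alone.

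Next, I postulate the quadratic ansatz
\[
w^{*}(t,s) = (s-\mu(t))^{\top}\Pi(t)(s-\mu(t)) + 2(\Psi(t)\mu(t))^{\top}s + c(t),
\]
whose gradient is $\nabla_{s}w^{*} = 2\Pi(s-\mu)+2\Psi\mu$. Plugging this into the formula for $u^{*}$ immediately reproduces (\ref{eq: optimal control of LQG}). Propagating the first two moments of the closed-loop FP equation under the affine drift $As - BR^{-1}B^{\top}(\Pi K(\Lambda)(s-\mu)+\Psi\mu)$ and the constant diffusion $\sigma$ gives the ODEs (\ref{eq: ODE of mu}) and (\ref{eq: ODE of Lambda}) for $\mu$ and $\Lambda$, with the stated initial conditions coming directly from $p_{0}$. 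Substituting the ansatz and $u^{*}$ back into the HJB equation (\ref{eq: optimal HJB eq}) and matching coefficients of the monomials $(s-\mu)(s-\mu)^{\top}$, $(s-\mu)\mu^{\top}$, and $\mu\mu^{\top}$ — while using the ODEs for $\mu$ and $\Lambda$ to absorb the contributions coming from their time dependence in $\pl_{t}w^{*}$ — produces the two Riccati ODEs (\ref{eq: ODE of Psi}) and (\ref{eq: ODE of Pi}), with terminal conditions $\Pi(T)=\Psi(T)=P$ read off from $\tilde g(s)=s^{\top}Ps$. The residual $c(t)$ satisfies an autonomous ODE that does not enter either the control law or the moment dynamics and can be ignored.

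The main obstacle is isolating the partially observable Riccati equation for $\Pi$. Unlike COSC, the gain $\Pi$ couples to the FP solution through $K(\Lambda)$ because $u$ is constrained to depend on $z$ rather than on $s$. The extra term $(I-K(\Lambda))^{\top}\Pi BR^{-1}B^{\top}\Pi(I-K(\Lambda))$ appears from the residual $s-\mb{E}_{p^{*}_{t}(x|z)}[s] = (I-K(\Lambda))(s-\mu)$, which captures the unobservable part of the state. Carefully tracking whether a quadratic form is averaged under $\mb{E}_{p^{*}_{t}(x|z)}$ before the minimization of the Hamiltonian is taken, or under $\mb{E}_{p^{*}_{t}(s)}$ after integrating over $z$, is the delicate book-keeping step that separates the ODE for $\Pi$ from the standard Riccati ODE for $\Psi$ and produces the correction reflecting the non-separation of estimation and control in ML-POSC.
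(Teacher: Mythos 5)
Your proposal is correct and follows essentially the same route as the paper, which defers the detailed computation to \cite{tottori_memory-limited_2022}: a quadratic ansatz for $w^{*}$, a Gaussian ansatz for $p^{*}$ (with the conditional-mean identity giving $K(\Lambda)$), pointwise minimization of the conditionally expected Hamiltonian, and coefficient matching in the HJB equation together with moment propagation in the FP equation — the same reduction reflected in the proof of Proposition \ref{prop: FBSM for LQG problem} in \ref{appendix: proof: theo: FBSM for LQG problem}. Your ansatz $w^{*}(t,s)=(s-\mu)^{\top}\Pi(s-\mu)+2(\Psi\mu)^{\top}s+c(t)$ is equivalent to the paper's form $s^{\top}\Pi s+\alpha^{\top}s+\beta$ with $\alpha=2(\Psi-\Pi)\mu$, so the bookkeeping you describe indeed yields (\ref{eq: ODE of Psi}) and (\ref{eq: ODE of Pi}).
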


\begin{proof}
The proof is shown in \cite{tottori_memory-limited_2022}.
\end{proof}

The ODE of $\Psi$ (\ref{eq: ODE of Psi}) can be solved backward in time from the terminal condition. 
Using $\Psi$, the ODE of $\mu$ (\ref{eq: ODE of mu}) can be solved forward in time from the initial condition. 
In contrast, the ODEs of $\Pi$ (\ref{eq: ODE of Pi}) and $\Lambda$ (\ref{eq: ODE of Lambda}) cannot be solved in the similar way as the ODEs of $\Psi$ (\ref{eq: ODE of Psi}) and $\mu$ (\ref{eq: ODE of mu}) because they interact with each other, which is the similar problem with the system of HJB-FP equations. 

\subsection{Forward-Backward Sweep Method}
In the FBSM of the LQG problem, the FP equation (\ref{eq: optimal FP eq}) and the HJB equation (\ref{eq: optimal HJB eq}) are reduced to the ODEs of $\Lambda$ (\ref{eq: ODE of Lambda}) and $\Pi$ (\ref{eq: ODE of Pi}), respectively, which is summarized in Algorithm \ref{alg: FBSM LQG}. 
$\mcal{F}(\Lambda,\Pi)$ and $\mcal{G}(\Lambda,\Pi)$ are defined by the right-hand sides of (\ref{eq: ODE of Lambda}) and (\ref{eq: ODE of Pi}), respectively, as follows: 
\begin{align}
	\mcal{F}(\Lambda,\Pi)&:=-\left(A-BR^{-1}B^{\top}\Pi K(\Lambda)\right)^{\top}\Lambda-\Lambda\left(A-BR^{-1}B^{\top}\Pi K(\Lambda)\right)-\Lambda\sigma\sigma^{\top}\Lambda,\nonumber\\
	\mcal{G}(\Lambda,\Pi)&:=Q+A^{\top}\Pi+\Pi A-\Pi BR^{-1}B^{\top}\Pi+(I-K(\Lambda))^{\top}\Pi BR^{-1}B^{\top}\Pi (I-K(\Lambda)).\nonumber
\end{align}
The following proposition shows that FBSM is reduced from Algorithm \ref{alg: FBSM} to Algorithm \ref{alg: FBSM LQG} in the LQG problem: 

\begin{prop}\label{prop: FBSM for LQG problem}
Algorithm \ref{alg: FBSM LQG} is the FBSM of the LQG problem when the control function is initialized by
\begin{align}
	u^{0}(t,z)=-R^{-1}B^{\top}\left(\Pi^{0} K(\Lambda^{0})(s-\mu)+\Psi\mu\right),
	\label{eq: FBSM LQG the initial control function}
\end{align}
where $\Pi^{0}$ is arbitrary and $\Lambda^{0}$ is the solution of $\dot{\Lambda}^{0}=\mcal{F}(\Lambda^{0},\Pi^{0})$ given $\Lambda^{0}(0)=\Lambda_{0}$. 
\end{prop}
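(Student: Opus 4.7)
The plan is to prove by induction on the iteration index $k$ that each control function $u^{k}$ produced by Algorithm \ref{alg: FBSM} in the LQG setting retains the affine feedback form of Proposition \ref{prop: optimal control of LQG in ML-POSC}, namely $u^{k}(t,z) = -R^{-1}B^{\top}\left(\Pi^{k} K(\Lambda^{k})(s-\mu) + \Psi \mu\right)$, where $(\Psi,\mu)$ is the same at every iteration and $(\Pi^{k},\Lambda^{k})$ evolves exactly as in Algorithm \ref{alg: FBSM LQG}. The base case $k=0$ is the initialization hypothesis (\ref{eq: FBSM LQG the initial control function}), together with the fact that any affine feedback applied to the linear-Gaussian SDE (\ref{SDE of LQG}) preserves the Gaussian family; consequently the initialization loop of Algorithm \ref{alg: FBSM} reduces $\pl_{t}p^{0} = \mcal{L}^{\dagger}_{u^{0}}p^{0}$ to the ODE system $\dot{\mu} = (A - BR^{-1}B^{\top}\Psi)\mu$ and $\dot{\Lambda}^{0} = \mcal{F}(\Lambda^{0},\Pi^{0})$, which is precisely the $k=0$ state of Algorithm \ref{alg: FBSM LQG}.

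For the inductive step on the backward half-iteration, given that $p^{k}$ is Gaussian with mean $\mu$ and precision $\Lambda^{k}$, I would propagate the quadratic ansatz $w^{k+1}(t,s) = (s-\mu)^{\top}\Pi^{k+1}(s-\mu) + 2m(t)^{\top}(s-\mu) + \alpha(t)$ backward from the terminal condition $w(T,s) = s^{\top}Ps$. Substituting this ansatz into the update line
\[
u^{k+1}_{t}(z) = \argmin_{u}\mb{E}_{p^{k}_{t}(x|z)}\left[\mcal{H}(t,s,u,w^{k+1}_{t+dt})\right],
\]
the Hamiltonian is quadratic in $u$ with Hessian $2R\succ 0$, so the minimizer is affine in $\mb{E}_{p^{k}_{t}(x|z)}[s] = K(\Lambda^{k})(s-\mu) + \mu$, and matching with the ansatz reproduces the feedback of Proposition \ref{prop: optimal control of LQG in ML-POSC} with $\Lambda=\Lambda^{k}$ and $\Pi=\Pi^{k+1}$. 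Inserting this minimizer into the HJB update $w^{k+1}_{t}(s) = w^{k+1}_{t+dt}(s) + \mcal{H}(t,s,u^{k+1}_{t},w^{k+1}_{t+dt})dt$ and matching the coefficients of $(s-\mu)(s-\mu)^{\top}$ reproduces $-\dot{\Pi}^{k+1} = \mcal{G}(\Lambda^{k},\Pi^{k+1})$, while the linear and constant parts reproduce the autonomous Riccati ODE for $\Psi$ and the ODE for the scalar piece. The forward half-iteration is symmetric: propagating the Gaussian ansatz for $p^{k+1}$ with $w^{k}_{t+dt}$ already in the inductive quadratic form yields $\dot{\Lambda}^{k+1} = \mcal{F}(\Lambda^{k+1},\Pi^{k})$.

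The key observation that lets the iteration close into Algorithm \ref{alg: FBSM LQG} is that the ODEs (\ref{eq: ODE of Psi}) and (\ref{eq: ODE of mu}) for $\Psi$ and $\mu$ are autonomous with respect to $(\Pi,\Lambda)$; since their boundary data are fixed by the problem, $\Psi$ and $\mu$ are invariants of the outer iteration, and only the pair $(\Pi^{k},\Lambda^{k})$ is actually updated. The main obstacle I expect is the bookkeeping of the quadratic ansatz across the backward and forward sweeps: one must verify that the quadratic part of $w$ remains anchored on $(s-\mu)$ rather than on $s$ itself, and that the Gaussian conditional mean evaluates exactly to $K(\Lambda^{k})(s-\mu) + \mu$ so that the expected Hamiltonian argmin collapses onto the form (\ref{eq: optimal control of LQG}). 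Both checks are already absorbed into Proposition \ref{prop: optimal control of LQG in ML-POSC}, so I anticipate that the bulk of the argument can be carried out by inductively invoking that proposition with the current iterates in place of the fixed point, rather than redoing the Gaussian algebra from scratch.
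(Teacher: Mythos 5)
Your proposal is correct and follows essentially the same route as the paper's proof: an induction over the initial, backward, and forward sweeps showing that the Gaussian ansatz $p^{k}_{t}=\mcal{N}(s|\mu,\Lambda^{k})$ and the quadratic ansatz for $w^{k+1}$ are preserved, which collapses Algorithm \ref{alg: FBSM} to the ODE updates $-\dot{\Pi}^{k+1}=\mcal{G}(\Lambda^{k},\Pi^{k+1})$ and $\dot{\Lambda}^{k+1}=\mcal{F}(\Lambda^{k+1},\Pi^{k})$, with the detailed LQG algebra deferred to the previous work exactly as the paper does. The only step worth making explicit is that the linear coefficient of $w^{k+1}$ equals $2(\Psi-\Pi^{k+1})\mu$ along the iteration (it satisfies the same ODE as in the fixed-point case with zero terminal condition), which is what guarantees that $\Psi$ and $\mu$ are indeed iteration invariants and that the argmin reproduces the feedback form (\ref{eq: optimal control of LQG}) with $\Lambda=\Lambda^{k}$, $\Pi=\Pi^{k+1}$.
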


\begin{proof}
The proof is shown in \ref{appendix: proof: theo: FBSM for LQG problem}. 
\end{proof}

\begin{algorithm}[t]
\caption{Forward-Backward Sweep Method (FBSM) in the LQG problem}
\label{alg: FBSM LQG}
\begin{algorithmic}
\STATE //--- Initial step ---//
\STATE $k\leftarrow0$
\STATE $\Lambda_{0}^{k}\leftarrow \Lambda_{0}$
\FOR{$t=0$ to $T-dt$}
	\STATE Initialize $\Pi_{t+dt}^{k}$
	\STATE $\Lambda_{t+dt}^{k}\leftarrow\Lambda_{t}^{k}+\mcal{F}(\Lambda_{t}^{k},\Pi_{t+dt}^{k})dt$
\ENDFOR
\WHILE{$J[u_{0:T-dt}^{k}]$ do not converge}
	\IF{$k$ is even}
		\STATE //--- Backward step ---//
		\STATE $\Pi_{T}^{k+1}\leftarrow P$
		\FOR{$t=T-dt$ to $0$}
			\STATE $\Pi_{t}^{k+1}\leftarrow\Pi_{t+dt}^{k+1}+\mcal{G}(\Lambda_{t}^{k},\Pi_{t+dt}^{k+1})dt$
		\ENDFOR
	\ELSE
		\STATE //--- Forward step ---//
		\STATE $\Lambda_{0}^{k+1}\leftarrow \Lambda_{0}$
		\FOR{$t=0$ to $T-dt$}
			\STATE $\Lambda_{t+dt}^{k+1}\leftarrow\Lambda_{t}^{k+1}+\mcal{F}(\Lambda_{t}^{k+1},\Pi_{t+dt}^{k})dt$
		\ENDFOR
	\ENDIF
	\STATE $k\leftarrow k+1$
\ENDWHILE
\RETURN $u_{0:T-dt}^{k}$
\end{algorithmic}
\end{algorithm}

\section{Numerical Experiments}\label{sec: NE}
In this section, we verify the convergence of FBSM in ML-POSC by conducting the numerical experiments of the LQG and non-LQG problems. 
The setting of the numerical experiments is the same with the previous work \cite{tottori_memory-limited_2022}.  

\subsection{LQG Problem}
In this subsection, we verify the convergence of FBSM in ML-POSC by conducting the numerical experiment of the LQG problem. 
We consider the state $x_{t}\in\mb{R}$, the observation $y_{t}\in\mb{R}$, and the memory $z_{t}\in\mb{R}$, which evolve by the following SDEs: 
\begin{align}
	dx_{t}&=\left(x_{t}+u_{t}\right)dt+d\omega_{t},\label{eq: state SDE LQG NE}\\
	dy_{t}&=x_{t}dt+d\nu_{t},\label{eq: observation SDE LQG NE}\\
	dz_{t}&=v_{t}dt+dy_{t},\label{eq: memory SDE LQG NE}
\end{align}
where $x_{0}$ and $z_{0}$ obey the standard Gaussian distributions, $y_{0}$ is an arbitrary real number, $\omega_{t}\in\mb{R}$ and $\nu_{t}\in\mb{R}$ are independent standard Wiener processes, $u_{t}=u(t,z_{t})\in\mb{R}$ and $v_{t}=v(t,z_{t})\in\mb{R}$ are the controls. 
The objective function to be minimized is given as follows: 
\begin{align}	
	J[u,v]:=\mb{E}_{p(x_{0:10},y_{0:10},z_{0:10};u,v)}\left[\int_{0}^{10}\left(x_{t}^{2}+u_{t}^{2}+v_{t}^{2}\right)dt\right].
	\label{eq: OF of ML-POSC LQG NE}
\end{align} 
Therefore, the objective of this problem is to minimize the state variance by the small state and memory controls. 

This problem corresponds to the LQG problem, which is defined by (\ref{SDE of LQG}) and (\ref{OF of LQG}). 
By defining $s_{t}:=(x_{t},z_{t})\in\mb{R}^{2}$, $\tilde{u}_{t}:=(u_{t},v_{t})\in\mb{R}^{2}$, and $\tilde{\omega}_{t}:=(\omega_{t},\nu_{t})\in\mb{R}^{2}$, the SDEs (\ref{eq: state SDE LQG NE}), (\ref{eq: observation SDE LQG NE}), (\ref{eq: memory SDE LQG NE}) can be rewritten as follows: 
\begin{align}
	&ds_{t}=\left(\left(\begin{array}{cc}
		1&0\\
		1&0\\
	\end{array}\right)s_{t}
	+\tilde{u}_{t}\right)dt
	+d\tilde{\omega}_{t}, 
\end{align}
which corresponds to (\ref{SDE of LQG}). 
Furthermore, the objective function (\ref{eq: OF of ML-POSC LQG NE}) can be rewritten as follows: 
\begin{align}	
	J[\tilde{u}]:=\mb{E}_{p(s_{0:10};\tilde{u})}\left[\int_{0}^{10}\left(
	s_{t}^{\top}\left(\begin{array}{cc}
		1&0\\
		0&0\\
	\end{array}\right)s_{t}
	+\tilde{u}_{t}^{\top}\tilde{u}_{t}\right)dt\right],
\end{align} 
which corresponds to (\ref{OF of LQG}). 

We apply the FBSM of the LQG problem (Algorithm \ref{alg: FBSM LQG}) to this problem. 
$\Pi^{0}(t)$ is initialized by $\Pi^{0}(t)=O$. 
In order to solve the ODEs of $\Pi^{k}(t)$ and $\Lambda^{k}(t)$, we use the Runge-Kutta method. 
Figure \ref{fig: LQG Pi and Lambda} shows the control gain matrix $\Pi^{k}(t)\in\mb{R}^{2\times2}$ and the precision matrix $\Lambda^{k}(t)\in\mb{R}^{2\times2}$ obtained by FBSM. 
The color of the curve represents the iteration $k$. 
The darkest curve corresponds to the first iteration $k=0$, and the brightest curve corresponds to the last iteration $k=50$. 
Importantly, $\Pi^{k}(t)$ and $\Lambda^{k}(t)$ converge with respect to the iteration $k$. 

Figure \ref{fig: LQG performance}(a) shows the objective function $J[u^{k}]$ with respect to the iteration $k$. 
The objective function $J[u^{k}]$ monotonically decreases with respect to the iteration $k$, which is consistent with Theorem \ref{theo: monotonicity of FBSM}. 
This monotonicity of FBSM is the nice property of ML-POSC that is not guaranteed in the deterministic control and MFSC. 
The objective function $J[u^{k}]$ finally converges, and its $u^{k}$ satisfies the Pontryagin's minimum principle from Theorem \ref{theo: convergence of FBSM}. 

Figure \ref{fig: LQG performance}(b,c,d) compares the performance of the control function $u^{k}$ at the first iteration $k=0$ and the last iteration $k=50$ by conducting the stochastic simulation. 
At the first iteration $k=0$, the distributions of the state and the memory are unstable, and the cumulative cost diverges. 
In contrast, at the last iteration $k=50$, the distributions of the state and the memory are stable, and the cumulative cost is smaller. 
This result indicates that FBSM improves the performance in ML-POSC. 

\begin{figure}[t]
\begin{center}
	\includegraphics[width=135mm]{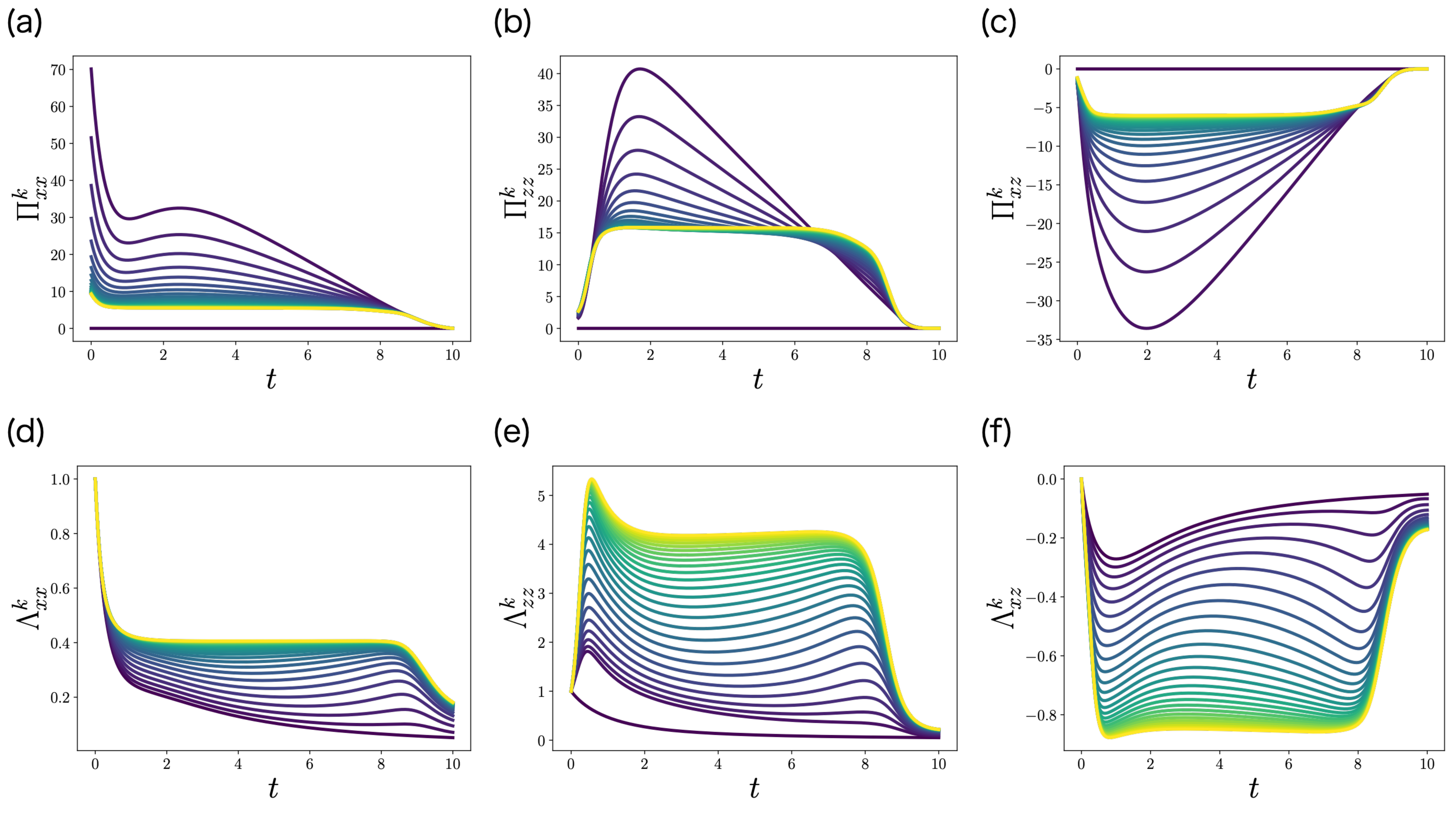}
	\caption{
	The elements of the control gain matrix $\Pi^{k}(t)\in\mb{R}^{2\times2}$ (a,b,c) and the precision matrix $\Lambda^{k}(t)\in\mb{R}^{2\times2}$ (d,e,f) obtained by FBSM (Algorithm \ref{alg: FBSM LQG}) in the numerical experiment of the LQG problem of ML-POSC. 
	Because $\Pi_{zx}^{k}(t)=\Pi_{xz}^{k}(t)$ and $\Lambda_{zx}^{k}(t)=\Lambda_{xz}^{k}(t)$, $\Pi_{zx}^{k}(t)$ and $\Lambda_{zx}^{k}(t)$ are not visualized. 
	The darkest curve corresponds to the first iteration $k=0$, and the brightest curve corresponds to the last iteration $k=50$. 
	$\Pi^{0}(t)$ is initialized by $\Pi^{0}(t)=O$. 
	}
	\label{fig: LQG Pi and Lambda}
\end{center}
\end{figure}

\begin{figure}[t]
\begin{center}
	\includegraphics[width=180mm]{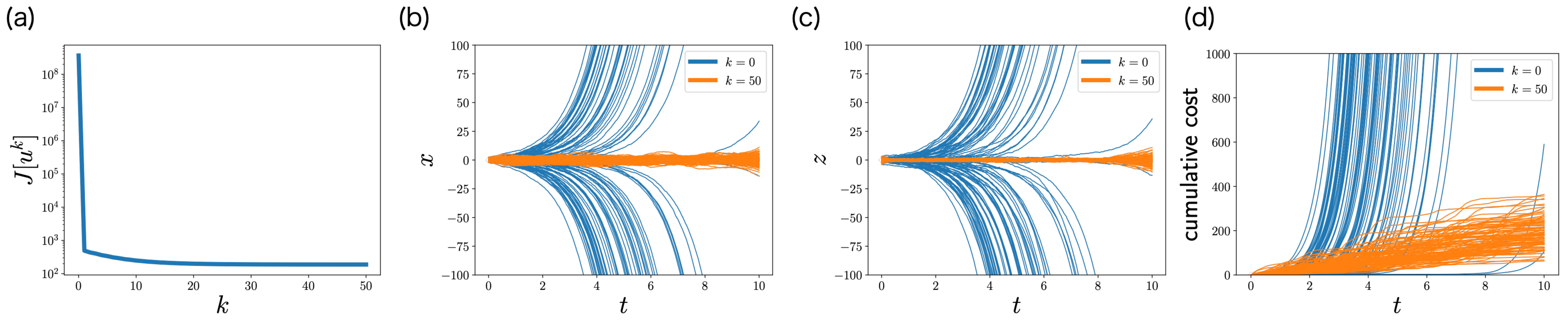}
	\caption{
	Performance of FBSM in the numerical experiment of the LQG problem of ML-POSC. 
	(a) The objective function $J[u^{k}]$ with respect to the iteration $k$. 
	(b,c,d) Stochastic simulation of the state $x_{t}$ (b), the memory $z_{t}$ (c), and the cumulative cost (d) for 100 samples. 
	The expectation of the cumulative cost at $t=10$ corresponds to the objective function (\ref{eq: OF of ML-POSC LQG NE}).
	Blue and orange curves correspond to the first iteration $k=0$ and the last iteration $k=50$, respectively. 
	}
	\label{fig: LQG performance}
\end{center}
\end{figure}

\subsection{Non-LQG Problem}
In this subsection, we verify the convergence of FBSM in ML-POSC by conducting the numerical experiment of the non-LQG problem. 
We consider the state $x_{t}\in\mb{R}$, the observation $y_{t}\in\mb{R}$, and the memory $z_{t}\in\mb{R}$, which evolve by the following SDEs: 
\begin{align}
	dx_{t}&=u_{t}dt+d\omega_{t},\label{eq: state SDE non-LQG NE}\\
	dy_{t}&=x_{t}dt+d\nu_{t},\label{eq: observation SDE non-LQG NE}\\
	dz_{t}&=dy_{t},\label{eq: memory SDE non-LQG NE}
\end{align}
where $x_{0}$ and $z_{0}$ obey the Gaussian distributions $p_{0}(x_{0})=\mcal{N}(x_{0}|0,0.01)$ and $p_{0}(z_{0})=\mcal{N}(z_{0}|0,0.01)$, respectively, $y_{0}$ is an arbitrary real number, $\omega_{t}\in\mb{R}$ and $\nu_{t}\in\mb{R}$ are independent standard Wiener processes, $u_{t}=u(t,z_{t})\in\mb{R}$ is the control. 
For the sake of simplicity, the memory control is not considered. 
The objective function to be minimized is given as follows: 
\begin{align}	
	J[u]:=\mb{E}_{p(x_{0:1},y_{0:1},z_{0:1};u)}\left[\int_{0}^{1}\left(Q(t,x_{t})+u_{t}^{2}\right)dt+10x_{1}^{2}\right], 
	\label{eq: OF of ML-POSC non-LQG NE}
\end{align} 
where  
\begin{align}
	Q(t,x):=\begin{cases}
		1000&(0.3\leq t\leq 0.6, 0.1\leq |x|\leq 2.0),\\
		0&(others).
	\end{cases}
	\label{eq: state cost function of non-LQG NE}
\end{align}
The cost function is high on the black rectangles in Figure \ref{fig: non-LQG performance} (b), which represents the obstacles. 
In addition, the terminal cost function is the lowest on the black cross in Figure \ref{fig: non-LQG performance} (b), which represents the desirable goal.
Therefore, the system should avoid the obstacles and reach the goal with the small control. 
Because the cost function is non-quadratic, it is a non-LQG problem. 

We apply FBSM (Algorithm \ref{alg: FBSM}) to this problem. 
$u^{0}(t,z)$ is initialized by $u^{0}(t,z)=0$. 
In order to solve the HJB equation and the FP equation, we use the finite-difference method. 
Figure \ref{fig: non-LQG HJB-FP} shows $w^{k}(t,s)$ and $p^{k}(t,s)$ obtained by FBSM. 
From the proof of Lemma \ref{lemm: current optimal control} (\ref{appendix: proof: lemm: current optimal control ver 1}), $w^{k}(t,s)$ is given as follows: 
\begin{align}
	w^{k}(t,s)=\mb{E}_{p(s_{t+dt:T}|s_{t}=s;u^{k})}\left[\int_{t}^{1}\left(Q(\tau,x_{\tau})+(u_{\tau}^{k})^{2}\right)d\tau+10x_{1}^{2}\right]. 
\end{align}
Because $u^{0}(t,z)=0$, $w^{0}(t,s)$ reflects the cost function corresponding to the obstacles and the goal (Figure \ref{fig: non-LQG HJB-FP}(a-e)). 
In contrast, because $u^{50}(t,z)\neq 0$, $w^{50}(t,s)$ becomes more complex (Figure \ref{fig: non-LQG HJB-FP}(f-j)). 
Especially, while $w^{0}(t,s)$ does not depends on the memory $z$, $w^{50}(t,s)$ depends on the memory $z$, which indicates that the control function $u^{50}(t,z)$ is adjusted by the memory $z$. 
Furthermore, while $p^{0}(t,s)$ is a unimodal distribution (Figure \ref{fig: non-LQG HJB-FP}(k-o)), $p^{50}(t,s)$ is a bimodal distribution (Figure \ref{fig: non-LQG HJB-FP}(p-t)), which can avoid the obstacles. 

Figure \ref{fig: non-LQG performance}(a) shows the objective function $J[u^{k}]$ with respect to the iteration $k$. 
The objective function $J[u^{k}]$ monotonically decreases with respect to the iteration $k$, which is consistent with Theorem \ref{theo: monotonicity of FBSM}. 
This monotonicity of FBSM is the nice property of ML-POSC that is not guaranteed in the deterministic control and MFSC. 
The objective function $J[u^{k}]$ finally converges, and its $u^{k}$ satisfies the Pontryagin's minimum principle from Theorem \ref{theo: convergence of FBSM}. 

Figure \ref{fig: non-LQG performance}(b,c) compares the performance of the control function $u^{k}$ at the first iteration $k=0$ and the last iteration $k=50$ by conducting the stochastic simulation. 
At the first iteration $k=0$, the obstacles cannot be avoided, which results in the higher objective function. 
In contrast, at the last iteration $k=50$, the obstacles can be avoided, which results in the lower objective function. 
This result indicates that FBSM improves the performance in ML-POSC. 

\begin{figure}[t]
\begin{center}
	\includegraphics[width=180mm]{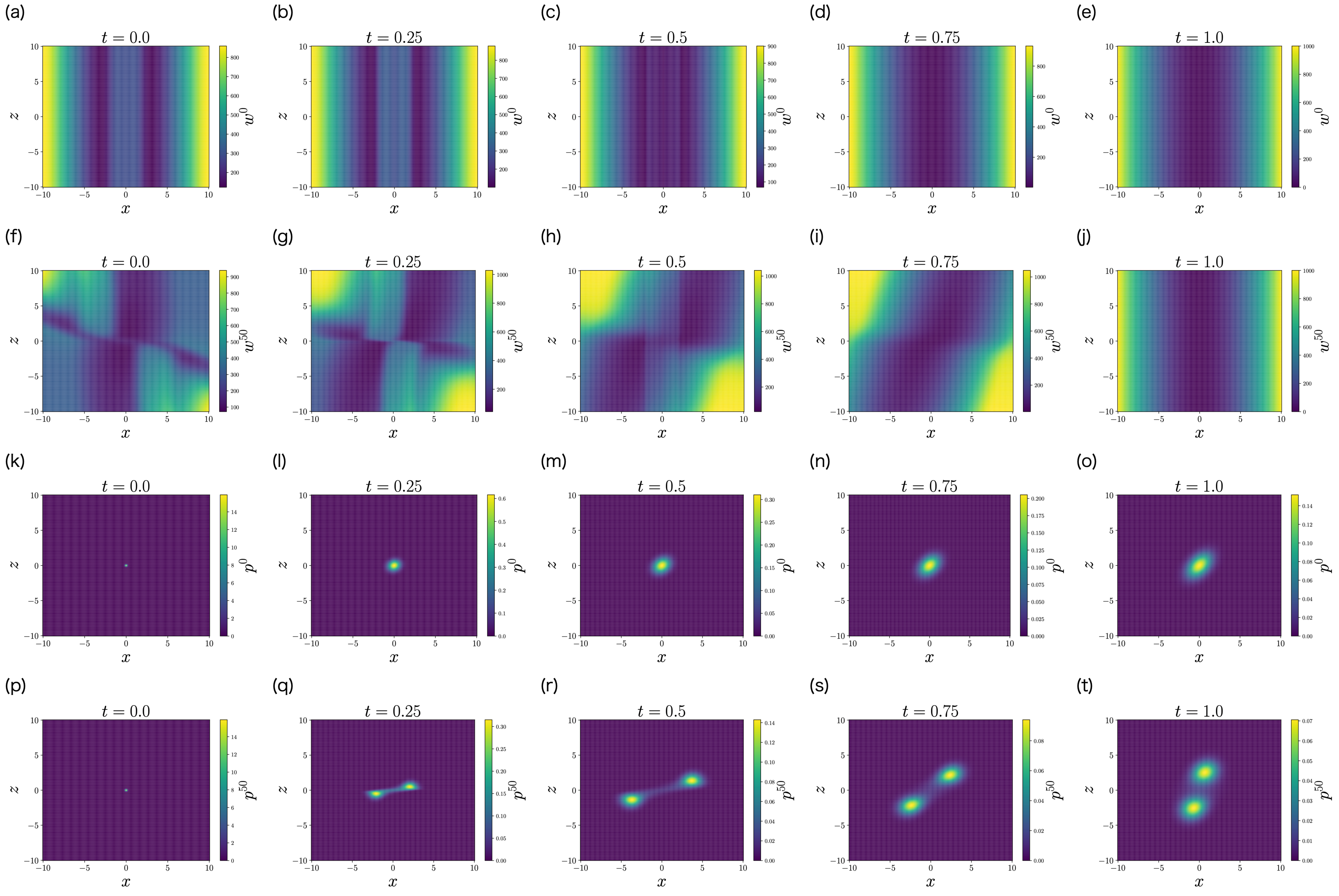}
	\caption{
	The solutions of the HJB equation $w^{k}(t,s)$ (a-j) and the FP equation $p^{k}(t,s)$ (k-t) at the first iteration $k=0$ (a-e,k-o) and at the last iteration $k=50$ (f-j,p-t) of FBSM (Algorithm \ref{alg: FBSM}) in the numerical experiment of the non-LQG problem of ML-POSC. 
	$u^{0}(t,z)$ is initialized by $u^{0}(t,z)=0$. 
	}
	\label{fig: non-LQG HJB-FP}
\end{center}
\end{figure}
\begin{figure}[t]
\begin{center}
	\includegraphics[width=135mm]{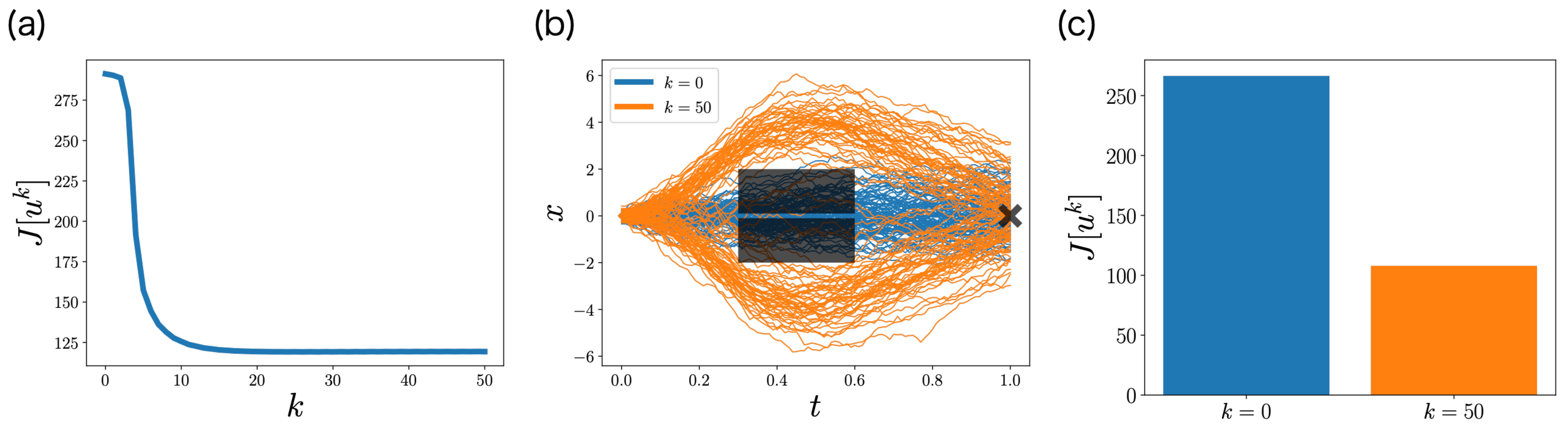}
	\caption{
	Performance of FBSM in the numerical experiment of the non-LQG problem of ML-POSC. 
	(a) The objective function $J[u^{k}]$ with respect to the iteration $k$. 
	(b) Stochastic simulation of the state $x_{t}$ for 100 samples. 
	The black rectangles and cross represent the obstacles and the goal, respectively. 
	Blue and orange curves correspond to the first iteration $k=0$ and the last iteration $k=50$, respectively. 
	(c) The objective function (\ref{eq: OF of ML-POSC non-LQG NE}), which is computed from 100 samples. 
	}
	\label{fig: non-LQG performance}
\end{center}
\end{figure}

\section{Discussion}\label{sec: conclusion}
In this work, we firstly showed that the system of HJB-FP equations corresponds to the Pontryagin's minimum principle on the probability density function space. 
Although the relationship between the system of HJB-FP equations and the Pontryagin's minimum principle has been mentioned briefly in MFSC \cite{crisan_master_2014,bensoussan_master_2015,bensoussan_interpretation_2017}, its details have not yet been investigated. 
We resolved this problem by deriving the system of HJB-FP equations in a similar way to the Pontryagin's minimum principle. 
We then proposed FBSM to ML-POSC. 
Although the convergence of FBSM is generally not guaranteed in the deterministic control \cite{krylov_method_1963,chernousko_method_1982,lenhart_optimal_2007,mcasey_convergence_2012} and MFSC \cite{carlini_semi-lagrangian_2013,carlini_fully_2014,carlini_semi-lagrangian_2015,lauriere_numerical_2021}, we proved the convergence in ML-POSC by noting the fact that the update of the current control function does not affect the future HJB equation in ML-POSC. 
Therefore, ML-POSC is a special and nice class where FBSM is guaranteed to converge. 

The regularized FBSM has recently been proposed in the deterministic control, which is guaranteed to converge even in the general deterministic control \cite{li_maximum_2018,liu_symplectic_2021}. 
Our work gives an intuitive reason why the regularized FBSM is guaranteed to converge. 
In the regularized FBSM, the Hamiltonian is regularized, which makes the update of the control function smaller. 
When the regularization is sufficiently strong, the effect from the current control function to the future backward dynamics is negligible. 
Therefore, the regularized FBSM of the deterministic control is guaranteed to converge for the similar reason to the FBSM of ML-POSC. 
However, the convergence of the regularized FBSM is much slower because the stronger regularization makes the update of the control function smaller. 
The FBSM of ML-POSC does not suffer from such a problem because the future backward dynamics already does not depend on the current control function without regularization. 

Our work gives a hint about a modification of the fixed-point iteration method to ensure the convergence in MFSC. 
Although the fixed-point iteration method is the most basic algorithm in MFSC, its convergence is not guaranteed \cite{carlini_semi-lagrangian_2013,carlini_fully_2014,carlini_semi-lagrangian_2015,lauriere_numerical_2021}.  
Our work showed that the fixed-point iteration method is equivalent with the FBSM on the probability density function space. 
Therefore, the idea of the regularized FBSM may also be applied to the fixed-point iteration method. 
More specifically, the fixed-point iteration method may be guaranteed to converge by regularizing the expected Hamiltonian. 

In FBSM, we solve the HJB equation and the FP equation by the finite-difference method. 
However, because the finite-difference method is prone to the curse of dimensionality, it is difficult to solve high-dimensional ML-POSC. 
In order to resolve this problem, we consider two directions. 
One direction is the policy iteration method \cite{puterman_markov_2014,bellman_dynamic_1957,howard_dynamic_1960}. 
Although the policy iteration method is almost the same with FBSM, only the update of the control function is different. 
While FBSM updates the system of HJB equations and the control function simultaneously, the policy iteration method updates them separately. 
In the policy iteration method, the system of HJB-FP equations becomes linear, which can be solved by the sampling method \cite{kappen_linear_2005,kappen_path_2005,satoh_iterative_2017}. 
Because the sampling method is more tractable than the finite-difference method, the policy iteration method may allow high-dimensional ML-POSC to be solved. 
Furthermore, the policy iteration method has recently been studied in MFSC \cite{cacace_policy_2021,lauriere_policy_2021,camilli_rates_2022}. 
However, its convergence is not guaranteed except for special cases in MFSC. 
In the similar way as FBSM, the convergence of the policy iteration method may be guaranteed in ML-POSC. 

The other direction is the machine learning. 
The neural network-based algorithms have recently been proposed in MFSC, which can solve high-dimensional problems efficiently \cite{ruthotto_machine_2020,lin_alternating_2021}. 
By extending these algorithms, high-dimensional ML-POSC may be solved efficiently. 
Furthermore, unlike MFSC, the coupling of HJB-FP equations is limited only to the optimal control function in ML-POSC. 
By exploiting this nice property, more efficient algorithms may be devised for ML-POSC.

\setcounter{section}{0}
\renewcommand{\thesection}{Appendix \Roman{section}}
\section{Deterministic Control}\label{appendix: DC}
In this section, we review the Pontryagin's minimum principle in the deterministic control \cite{yong_stochastic_1999,vinter_optimal_2010,lewis_optimal_2012,aschepkov_optimal_2016}. 

\subsection{Problem Formulation}\label{appendix: DC-Problem formulation}
In this subsection, we formulate the deterministic control \cite{vinter_optimal_2010,lewis_optimal_2012,aschepkov_optimal_2016}. 
The state of the system $s_{t}\in\mb{R}^{d_{s}}$ at time $t\in[0,T]$ evolves by the following ordinary differential equation (ODE): 
\begin{align}
	\frac{ds_{t}}{dt}=b(t,s_{t},u_{t}), 
	\label{eq: state equation}
\end{align}
where the initial state is $s_{0}$, and the control is $u_{t}=u(t)\in\mb{R}^{d_{u}}$. 
The objective function is given by the following cumulative cost function: 
\begin{align}	
	J[u]:=\int_{0}^{T}f(t,s_{t},u_{t})dt+g(s_{T}),
	\label{eq: OF of DC}
\end{align}
where $f$ is the cost function, and $g$ is the terminal cost function.  
The deterministic control is the problem to find the optimal control function $u^{*}$ that minimizes the cumulative cost function $J[u]$ as follows: 
\begin{align}	
	u^{*}=\argmin_{u}J[u].
	\label{eq: DC}
\end{align}

\subsection{Hamiltonian}\label{appendix: DC-Hamiltonian}
Before we show the optimality conditions, we define the Hamiltonian as follows: 
\begin{align}
	\mcal{H}\left(t,s,u,\lambda\right):=f(t,s,u)+\lambda^{\top}b(t,s,u), 
	\label{eq: Hamiltonian DC}
\end{align}
where $^\forall \lambda\in\mb{R}^{d_{s}}$ is the adjoint variable. 
The following lemma shows the relationship between the objective function $J[u]$ and the Hamiltonian $\mcal{H}(t,s,u,\lambda)$, 
which is significant for the optimality conditions \cite{yong_stochastic_1999,vinter_optimal_2010}. 

\begin{lemm}[\cite{yong_stochastic_1999,vinter_optimal_2010}]\label{lemm: J-J DC}
Let $^\forall u: [0,T]\to\mb{R}^{d_{u}}$ and $^\forall u': [0,T]\to\mb{R}^{d_{u}}$ be the arbitrary control functions, and let $s$ and $s'$ be the states driven by the control functions $u$ and $u'$, respectively.  
Then $J[u]-J[u']$ satisfies the following equation: 
\begin{align}
	J[u]-J[u']&=\int_{0}^{T}\left(\mcal{H}(t,s_{t},u_{t},\lambda_{t}')-\mcal{H}(t,s_{t}',u_{t}',\lambda_{t}')-\left(\frac{\pl \mcal{H}(t,s_{t}',u_{t}',\lambda_{t}')}{\pl s}\right)^{\top}(s_{t}-s_{t}')\right)dt\nonumber\\
	&\ \ \ +g(s_{T})-g(s_{T}')-\left(\frac{\pl g(s_{T}')}{\pl s}\right)^{\top}(s_{T}-s_{T}'),
	\label{eq: J-J DC}
\end{align}
where $\lambda_{t}'$ is the adjoint variable of $s_{t}'$, which is the solution of the following ODE: 
\begin{align}
	-\frac{d\lambda_{t}'}{dt}=\frac{\pl \mcal{H}\left(t,s_{t}',u_{t}',\lambda_{t}'\right)}{\pl s}, 
	\label{eq: adjoint equation}
\end{align}
where $\lambda_{T}'=\pl g(s_{T}')/\pl s$. 
\end{lemm}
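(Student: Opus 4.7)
The plan is to start from the raw difference
\[
J[u]-J[u'] = \int_{0}^{T}\bigl(f(t,s_{t},u_{t})-f(t,s_{t}',u_{t}')\bigr)\,dt + g(s_{T}) - g(s_{T}'),
\]
and rewrite the running cost in terms of the Hamiltonian by using the definition $f(t,s,u)=\mcal{H}(t,s,u,\lambda)-\lambda^{\top}b(t,s,u)$ from (\ref{eq: Hamiltonian DC}). Choosing $\lambda=\lambda_{t}'$ for both terms (since $\lambda_{t}'$ is a fixed function of time independent of the state argument) produces
\[
f(t,s_{t},u_{t})-f(t,s_{t}',u_{t}') = \mcal{H}(t,s_{t},u_{t},\lambda_{t}')-\mcal{H}(t,s_{t}',u_{t}',\lambda_{t}') - (\lambda_{t}')^{\top}\bigl(b(t,s_{t},u_{t})-b(t,s_{t}',u_{t}')\bigr).
\]

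Next I would recognize that $b(t,s_{t},u_{t})-b(t,s_{t}',u_{t}') = \tfrac{d}{dt}(s_{t}-s_{t}')$ by the state equation (\ref{eq: state equation}), and apply integration by parts to the resulting term:
\[
\int_{0}^{T}(\lambda_{t}')^{\top}\frac{d}{dt}(s_{t}-s_{t}')\,dt = \bigl[(\lambda_{t}')^{\top}(s_{t}-s_{t}')\bigr]_{0}^{T} - \int_{0}^{T}\Bigl(\frac{d\lambda_{t}'}{dt}\Bigr)^{\top}(s_{t}-s_{t}')\,dt.
\]
Since both trajectories start from the common initial state $s_{0}$, the boundary contribution at $t=0$ vanishes. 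Substituting the adjoint equation (\ref{eq: adjoint equation}), i.e.\ $d\lambda_{t}'/dt = -\partial \mcal{H}(t,s_{t}',u_{t}',\lambda_{t}')/\partial s$, converts the remaining integral into exactly the $-(\partial \mcal{H}/\partial s)^{\top}(s_{t}-s_{t}')$ term appearing in (\ref{eq: J-J DC}).

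Finally, the boundary contribution at $t=T$ combined with the terminal condition $\lambda_{T}'=\partial g(s_{T}')/\partial s$ reproduces the terminal correction $-(\partial g(s_{T}')/\partial s)^{\top}(s_{T}-s_{T}')$, while the $g(s_{T})-g(s_{T}')$ piece is carried through unchanged. Collecting all contributions yields precisely (\ref{eq: J-J DC}). The computation is essentially bookkeeping: the only step that requires care is the sign management in the integration by parts together with the substitution of the adjoint equation, and the recognition that the boundary term at $T$ must be kept rather than canceled so that it combines correctly with the terminal cost difference. No convexity or optimality is used here; the identity holds for arbitrary $u,u'$, which is what makes it the workhorse for deriving the Pontryagin minimum principle in the next step.
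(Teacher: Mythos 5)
Your proposal is correct and follows essentially the same route as the paper's proof: rewrite the running cost via the Hamiltonian definition with the fixed adjoint $\lambda_{t}'$, use the state equation to identify $b(t,s_{t},u_{t})-b(t,s_{t}',u_{t}')$ with $\frac{d}{dt}(s_{t}-s_{t}')$, integrate by parts using $s_{0}-s_{0}'=0$, and then substitute the adjoint equation and its terminal condition $\lambda_{T}'=\pl g(s_{T}')/\pl s$. No gaps; the sign bookkeeping and treatment of the boundary term at $t=T$ are handled exactly as in the paper.
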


\begin{proof}
$J[u]-J[u']$ can be calculated as follows: 
\begin{align}
	J[u]-J[u']
	&=\left[\int_{0}^{T}f(t,s_{t},u_{t})dt+g(s_{T})\right]-\left[\int_{0}^{T}f(t,s_{t}',u_{t}')dt+g(s_{T}')\right]\nonumber\\
	&=\left[\int_{0}^{T}\left(\mcal{H}(t,s_{t},u_{t},\lambda_{t}')-(\lambda_{t}')^{\top}b(t,s_{t},u_{t})\right)dt+g(s_{T})\right]\nonumber\\
	&\ \ \ -\left[\int_{0}^{T}\left(\mcal{H}(t,s_{t}',u_{t}',\lambda_{t}')-(\lambda_{t}')^{\top}b(t,s_{t}',u_{t}')\right)dt+g(s_{T}')\right]\nonumber\\
	&=\int_{0}^{T}\left(\mcal{H}(t,s_{t},u_{t},\lambda_{t}')-\mcal{H}(t,s_{t}',u_{t}',\lambda_{t}')\right)dt\nonumber\\
	&\ \ \ -\int_{0}^{T}(\lambda_{t}')^{\top}\left(b(t,s_{t},u_{t})-b(t,s_{t}',u_{t}')\right)dt+g(s_{T})-g(s_{T}').
\end{align}
From the state equation (\ref{eq: state equation}), 
\begin{align}
	J[u]-J[u']
	&=\int_{0}^{T}\left(\mcal{H}(t,s_{t},u_{t},\lambda_{t}')-\mcal{H}(t,s_{t}',u_{t}',\lambda_{t}')\right)dt\nonumber\\
	&\ \ \ -\int_{0}^{T}(\lambda_{t}')^{\top}\frac{d(s_{t}-s_{t}')}{dt}dt+g(s_{T})-g(s_{T}').
\end{align}
From the integration by parts and $s_{0}-s_{0}'=0$, 
\begin{align}
	J[u]-J[u']
	&=\int_{0}^{T}\left(\mcal{H}(t,s_{t},u_{t},\lambda_{t}')-\mcal{H}(t,s_{t}',u_{t}',\lambda_{t}')\right)dt\nonumber\\
	&\ \ \ +\int_{0}^{T}\left(\frac{d\lambda_{t}'}{dt}\right)^{\top}(s_{t}-s_{t}')dt+g(s_{T})-g(s_{T}')-(\lambda_{T}')^{\top}(s_{T}-s_{T}').
\end{align}
From the adjoint equation (\ref{eq: adjoint equation}), (\ref{eq: J-J DC}) is obtained. 
\end{proof}

\subsection{Necessary Condition}\label{appendix: DC-NC}
We show the Pontryagin's minimum principle, which is a necessary condition of the optimal control function as follows \cite{yong_stochastic_1999,vinter_optimal_2010}. 
\begin{prop}[\cite{yong_stochastic_1999,vinter_optimal_2010}]\label{prop: NC DC}
In the deterministic control, the optimal control function $u^{*}$ satisfies the following equation: 
\begin{align}
	u^{*}(t)=\argmin_{u}\mcal{H}\left(t,s_{t}^{*},u,\lambda_{t}^{*}\right),\ ^\forall t\in[0,T],
	\label{eq: optimal control of DC}
\end{align}
where $s_{t}^{*}$ is the state driven by the optimal control function $u^{*}$, which is the solution of the following state equation: 
\begin{align}
	\frac{ds_{t}}{dt}=\frac{\pl \mcal{H}(t,s_{t}^{*},u_{t}^{*},\lambda_{t}^{*})}{\pl \lambda},
	\label{eq: optimal state equation}
\end{align}
where the initial state is $s_{0}$. 
Because $\pl \mcal{H}(t,s_{t}^{*},u_{t}^{*},\lambda_{t}^{*})/\pl \lambda=b(t,s_{t}^{*},u_{t}^{*})$, (\ref{eq: optimal state equation}) is consistent with (\ref{eq: state equation}). 
$\lambda_{t}^{*}$ is the adjoint variable of $s_{t}^{*}$, which is the solution of the following adjoint equation: 
\begin{align}
	-\frac{d\lambda_{t}^{*}}{dt}=\frac{\pl \mcal{H}(t,s_{t}^{*},u_{t}^{*},\lambda_{t}^{*})}{\pl s}, 
	\label{eq: optimal adjoint equation}
\end{align}
where $\lambda_{T}^{*}=\pl g(s_{T}^{*})/\pl s$. 
\end{prop}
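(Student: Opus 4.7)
The plan is to combine Lemma \ref{lemm: J-J DC} with a needle (spike) variation of the control to extract a pointwise minimality condition on the Hamiltonian. Setting $u' = u^{*}$ in that lemma gives $s_{t}' = s_{t}^{*}$ and $\lambda_{t}' = \lambda_{t}^{*}$, and optimality of $u^{*}$ yields
\begin{align*}
0 &\leq \int_{0}^{T}\left[\mcal{H}(t,s_{t},u_{t},\lambda_{t}^{*}) - \mcal{H}(t,s_{t}^{*},u_{t}^{*},\lambda_{t}^{*}) - \left(\frac{\pl \mcal{H}(t,s_{t}^{*},u_{t}^{*},\lambda_{t}^{*})}{\pl s}\right)^{\top}(s_{t}-s_{t}^{*})\right]dt \\
&\quad + g(s_{T}) - g(s_{T}^{*}) - \left(\frac{\pl g(s_{T}^{*})}{\pl s}\right)^{\top}(s_{T} - s_{T}^{*})
\end{align*}
for every admissible control $u$, with $s_{t}$ the trajectory driven by $u$. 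Because only Hamiltonian differences along the optimal adjoint appear, with a first-order state correction subtracted off, this inequality collapses cleanly under a localized perturbation of $u$.

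Next I would fix $\tau \in (0,T)$ and an arbitrary $\tilde{u} \in \mb{R}^{d_{u}}$ and introduce the needle variation $u_{t}^{\ve} = \tilde{u}$ for $t \in [\tau, \tau+\ve]$ and $u_{t}^{\ve} = u^{*}(t)$ otherwise. By standard ODE perturbation (Gronwall), the corresponding trajectory satisfies $s_{t}^{\ve} = s_{t}^{*}$ on $[0,\tau)$ and $\|s_{t}^{\ve} - s_{t}^{*}\| = O(\ve)$ uniformly on $[\tau,T]$. Substituting $u = u^{\ve}$ into the inequality above and splitting the time integral: on $[0,T]\setminus[\tau, \tau+\ve]$, where $u_{t}^{\ve} = u_{t}^{*}$, the integrand equals $\mcal{H}(t,s_{t}^{\ve},u_{t}^{*},\lambda_{t}^{*}) - \mcal{H}(t,s_{t}^{*},u_{t}^{*},\lambda_{t}^{*}) - (\pl \mcal{H}/\pl s)^{\top}(s_{t}^{\ve} - s_{t}^{*})$, which by Taylor's theorem is $o(\|s_{t}^{\ve} - s_{t}^{*}\|) = o(\ve)$ pointwise and contributes $o(\ve)$ in total; the terminal boundary correction is likewise $o(\ve)$ by Taylor applied to $g$. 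Inside the spike $[\tau, \tau+\ve]$, since the state variation remains $O(\ve)$, the integrand equals $\mcal{H}(\tau,s_{\tau}^{*},\tilde{u},\lambda_{\tau}^{*}) - \mcal{H}(\tau,s_{\tau}^{*},u^{*}(\tau),\lambda_{\tau}^{*}) + O(\ve)$, yielding an integral contribution $\ve\left[\mcal{H}(\tau,s_{\tau}^{*},\tilde{u},\lambda_{\tau}^{*}) - \mcal{H}(\tau,s_{\tau}^{*},u^{*}(\tau),\lambda_{\tau}^{*})\right] + o(\ve)$.

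Dividing by $\ve$ and letting $\ve \downarrow 0$ gives $\mcal{H}(\tau,s_{\tau}^{*},\tilde{u},\lambda_{\tau}^{*}) \geq \mcal{H}(\tau,s_{\tau}^{*},u^{*}(\tau),\lambda_{\tau}^{*})$ for almost every $\tau \in [0,T]$ and every $\tilde{u}$, which is precisely (\ref{eq: optimal control of DC}). The state equation (\ref{eq: optimal state equation}) is just (\ref{eq: state equation}) rewritten via the identity $\pl\mcal{H}/\pl\lambda = b$, and the adjoint equation (\ref{eq: optimal adjoint equation}) is (\ref{eq: adjoint equation}) specialized to $u' = u^{*}$ by the construction of $\lambda^{*}$. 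The main technical obstacle is making the asymptotic estimates rigorous: we need $\|s^{\ve} - s^{*}\|_{\infty} = O(\ve)$ and uniform Taylor remainders, both of which require $C^{1}$ smoothness of $b$, $f$, $g$ in $(s,u)$ together with suitable boundedness, and the passage from an integral inequality holding for every $\tau$ to a pointwise a.e.\ statement relies on a Lebesgue differentiation argument.
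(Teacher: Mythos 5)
Your proposal takes essentially the same route as the paper's proof: apply Lemma \ref{lemm: J-J DC} with $u'=u^{*}$, insert a needle (spike) variation of the control on a small interval, and let $\ve\to 0$ so that optimality of $u^{*}$ forces the pointwise minimality of the Hamiltonian, with the state and adjoint equations following from $\pl\mcal{H}/\pl\lambda=b$ and the construction of $\lambda^{*}$. The only difference is that you spell out the Gronwall estimate $\|s^{\ve}-s^{*}\|=O(\ve)$, the Taylor remainders, and the Lebesgue-point argument that the paper treats heuristically, so your argument is correct and, if anything, slightly more rigorous.
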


\begin{proof}
We define the control function
\begin{align}
	u^{\ve}(t):=\begin{cases}
		u^{*}(t)&t\in[0,T]\backslash E_{\ve},\\
		u(t)&t\in E_{\ve},\\
	\end{cases}
\end{align}
where $E_{\ve}:=[t',t'+\ve]\subseteq[0,T]$, and $^\forall u:[0,T]\to\mb{R}^{d_{u}}$. 
From the Lemma \ref{lemm: J-J DC}, $J[u^{\ve}]-J[u^{*}]$ can be calculated as follows: 
\begin{align}
	J[u^{\ve}]-J[u^{*}]&=\int_{0}^{T}\left(\mcal{H}(t,s_{t}^{\ve},u_{t}^{\ve},\lambda_{t}^{*})-\mcal{H}(t,s_{t}^{*},u_{t}^{*},\lambda_{t}^{*})-\left(\frac{\pl \mcal{H}(t,s_{t}^{*},u_{t}^{*},\lambda_{t}^{*})}{\pl s}\right)^{\top}(s_{t}^{\ve}-s_{t}^{*})\right)dt\nonumber\\
	&\ \ \ +g(s_{T}^{\ve})-g(s_{T}^{*})-\left(\frac{\pl g(s_{T}^{*})}{\pl s}\right)^{\top}(s_{T}^{\ve}-s_{T}^{*})\nonumber\\
	&=\int_{0}^{T}\left(\mcal{H}(t,s_{t}^{\ve},u_{t}^{*},\lambda_{t}^{*})-\mcal{H}(t,s_{t}^{*},u_{t}^{*},\lambda_{t}^{*})-\left(\frac{\pl \mcal{H}(t,s_{t}^{*},u_{t}^{*},\lambda_{t}^{*})}{\pl s}\right)^{\top}(s_{t}^{\ve}-s_{t}^{*})\right)dt\nonumber\\
	&\ \ \ +g(s_{T}^{\ve})-g(s_{T}^{*})-\left(\frac{\pl g(s_{T}^{*})}{\pl s}\right)^{\top}(s_{T}^{\ve}-s_{T}^{*})\nonumber\\
	&\ \ \ +\int_{E_{\ve}}\left(\mcal{H}(t,s_{t}^{\ve},u_{t},\lambda_{t}^{*})-\mcal{H}(t,s_{t}^{\ve},u_{t}^{*},\lambda_{t}^{*})\right)dt.
\end{align}
Letting $\ve\to0$,
\begin{align}
	J[u^{\ve}]-J[u^{*}]&=\int_{0}^{T}\left(\left(\frac{\pl \mcal{H}(t,s_{t}^{*},u_{t}^{*},\lambda_{t}^{*})}{\pl s}\right)^{\top}(s_{t}^{\ve}-s_{t}^{*})-\left(\frac{\pl \mcal{H}(t,s_{t}^{*},u_{t}^{*},\lambda_{t}^{*})}{\pl s}\right)^{\top}(s_{t}^{\ve}-s_{t}^{*})\right)dt\nonumber\\
	&\ \ \ +\left(\frac{\pl g(s_{T}^{*})}{\pl s}\right)^{\top}(s_{T}^{\ve}-s_{T}^{*})-\left(\frac{\pl g(s_{T}^{*})}{\pl s}\right)^{\top}(s_{T}^{\ve}-s_{T}^{*})\nonumber\\
	&\ \ \ +\left(\mcal{H}(t',s_{t'}^{*},u_{t'},\lambda_{t'}^{*})-\mcal{H}(t',s_{t'}^{*},u_{t'}^{*},\lambda_{t'}^{*})\right)dt\nonumber\\
	&=\left(\mcal{H}(t',s_{t'}^{*},u_{t'},\lambda_{t'}^{*})-\mcal{H}(t',s_{t'}^{*},u_{t'}^{*},\lambda_{t'}^{*})\right)dt.
\end{align}
Because $u^{*}$ is the optimal control function, the following inequality is satisfied: 
\begin{align}
	0\leq J[u^{\ve}]-J[u^{*}]
	&=\left(\mcal{H}(t',s_{t'}^{*},u_{t'},\lambda_{t'}^{*})-\mcal{H}(t',s_{t'}^{*},u_{t'}^{*},\lambda_{t'}^{*})\right)dt.
\end{align}
Therefore, (\ref{eq: optimal control of DC}) is obtained. 
\end{proof}

\subsection{Sufficient Condition}\label{appendix: DC-SC}
The Pontryagin's minimum principle becomes a necessary and sufficient condition if the Hamiltonian and the terminal cost function are convex as follows \cite{yong_stochastic_1999,vinter_optimal_2010}: 

\begin{prop}[\cite{yong_stochastic_1999,vinter_optimal_2010}]\label{prop: SC DC}
Assume that $\mcal{H}(t,s,u,\lambda)$ is convex with respect to $s$ and $u$ and $g(s)$ is convex with respect to $s$. 
If the control function $u^{*}$ satisfies (\ref{eq: optimal control of DC}), then it is the optimal control function of the deterministic control. 
\end{prop}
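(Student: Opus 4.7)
My plan is to derive $J[u] - J[u^{*}] \ge 0$ for an arbitrary admissible $u$ by feeding $u^{*}$ into Lemma \ref{lemm: J-J DC} as the reference control and then bounding each resulting term from below using the three convexity hypotheses together with the Hamiltonian minimization condition (\ref{eq: optimal control of DC}).

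Concretely, I would proceed as follows. First, I would apply Lemma \ref{lemm: J-J DC} with the primed control taken to be $u^{*}$, so that $s' = s^{*}$ and $\lambda' = \lambda^{*}$. This yields an explicit formula for $J[u] - J[u^{*}]$ consisting of (i) an integral of the Hamiltonian difference minus the $s$-gradient remainder along the trajectories $s_{t}$ and $s_{t}^{*}$, and (ii) a terminal remainder of the form $g(s_{T}) - g(s_{T}^{*}) - (\partial g(s_{T}^{*})/\partial s)^{\top}(s_{T}-s_{T}^{*})$. The terminal remainder is immediately non-negative by the assumed convexity of $g$ (first-order characterization of convex functions).

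For the integrand, I would invoke the joint convexity of $\mcal{H}(t,\cdot,\cdot,\lambda)$ in $(s,u)$, which gives the supporting hyperplane inequality
\begin{align*}
\mcal{H}(t,s_{t},u_{t},\lambda_{t}^{*}) - \mcal{H}(t,s_{t}^{*},u_{t}^{*},\lambda_{t}^{*})
&\ge \left(\frac{\pl \mcal{H}(t,s_{t}^{*},u_{t}^{*},\lambda_{t}^{*})}{\pl s}\right)^{\top}(s_{t}-s_{t}^{*}) \\
&\quad + \left(\frac{\pl \mcal{H}(t,s_{t}^{*},u_{t}^{*},\lambda_{t}^{*})}{\pl u}\right)^{\top}(u_{t}-u_{t}^{*}).
\end{align*}
After cancelling the $s$-gradient term against the corresponding subtraction in Lemma \ref{lemm: J-J DC}, the integrand is bounded below by $(\partial \mcal{H}/\partial u)^{\top}(u_{t}-u_{t}^{*})$ evaluated along $(s_{t}^{*},u_{t}^{*},\lambda_{t}^{*})$. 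The Pontryagin condition (\ref{eq: optimal control of DC}) says that $u_{t}^{*}$ minimizes the convex function $u \mapsto \mcal{H}(t,s_{t}^{*},u,\lambda_{t}^{*})$, and the first-order optimality condition (or equivalently, the tangent-line inequality for convex functions at their minimizer) then forces $(\partial \mcal{H}/\partial u)^{\top}(u_{t}-u_{t}^{*}) \ge 0$ for all admissible $u_{t}$. Integrating in $t$ and combining with the terminal non-negativity completes the proof.

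The main obstacle I anticipate is handling the Hamiltonian minimization step cleanly. In the unconstrained smooth case the first-order condition reduces to $\partial_{u}\mcal{H}(t,s_{t}^{*},u_{t}^{*},\lambda_{t}^{*}) = 0$, which trivializes the last step; but if the admissible set for $u$ is constrained, one needs the variational inequality form of the minimum condition to conclude $(\partial \mcal{H}/\partial u)^{\top}(u_{t}-u_{t}^{*}) \ge 0$, and this requires $u_{t}$ to be restricted to the same feasible set. A second minor point is that the joint-convexity hypothesis in $(s,u)$ (rather than separate convexity in each variable) is what actually makes the supporting hyperplane bound usable; I would state this explicitly to avoid ambiguity with the phrase ``convex with respect to $s$ and $u$'' in the proposition.
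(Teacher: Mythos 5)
Your proposal is correct and follows essentially the same route as the paper's proof: apply Lemma \ref{lemm: J-J DC} with $u'=u^{*}$, use the first-order convexity inequalities for $\mcal{H}$ (jointly in $(s,u)$, as you rightly note the hypothesis must be read) and for $g$, and conclude via the stationarity condition $\pl\mcal{H}(t,s_{t}^{*},u_{t}^{*},\lambda_{t}^{*})/\pl u=0$ implied by (\ref{eq: optimal control of DC}). Your remarks on the constrained-control variational inequality and on joint versus separate convexity are sensible refinements but do not change the argument.
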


\begin{proof}
We define the arbitrary control function $^\forall u: [0,T]\to\mb{R}^{d_{u}}$. 
From the Lemma \ref{lemm: J-J DC}, $J[u]-J[u^{*}]$ is given by the following equation: 
\begin{align}
	J[u]-J[u^{*}]&=\int_{0}^{T}\left(\mcal{H}(t,s_{t},u_{t},\lambda_{t}^{*})-\mcal{H}(t,s_{t}^{*},u_{t}^{*},\lambda_{t}^{*})-\left(\frac{\pl \mcal{H}(t,s_{t}^{*},u_{t}^{*},\lambda_{t}^{*})}{\pl s}\right)^{\top}(s_{t}-s_{t}^{*})\right)dt\nonumber\\
	&\ \ \ +g(s_{T})-g(s_{T}^{*})-\left(\frac{\pl g(s_{T}^{*})}{\pl s}\right)^{\top}(s_{T}-s_{T}^{*}).
\end{align}
Because $\mcal{H}(t,s,u,\lambda)$ is convex with respect to $s$ and $u$ and $g(s)$ is convex with respect to $s$, the following inequalities are satisfied: 
\begin{align}
	\mcal{H}(t,s_{t},u_{t},\lambda_{t}^{*})&\geq \mcal{H}(t,s_{t}^{*},u_{t}^{*},\lambda_{t}^{*})+\left(\frac{\pl \mcal{H}(t,s_{t}^{*},u_{t}^{*},\lambda_{t}^{*})}{\pl s}\right)^{\top}(s_{t}-s_{t}^{*})\nonumber\\
	&\ \ \ +\left(\frac{\pl \mcal{H}(t,s_{t}^{*},u_{t}^{*},\lambda_{t}^{*})}{\pl u}\right)^{\top}(u_{t}-u_{t}^{*}),\\
	g(s_{T})&\geq g(s_{T}^{*})+\left(\frac{\pl g(s_{T}^{*})}{\pl s}\right)^{\top}(s_{T}-s_{T}^{*}).
\end{align}
Hence, the following inequality is satisfied: 
\begin{align}
	J[u]-J[u^{*}]
	&\geq\int_{0}^{T}\left(\frac{\pl \mcal{H}(t,s_{t}^{*},u_{t}^{*},\lambda_{t}^{*})}{\pl u}\right)^{\top}(u_{t}-u_{t}^{*})dt. 
\end{align}
Because $u^{*}$ satisfies (\ref{eq: optimal control of DC}), the following stationary condition is satisfied: 
\begin{align}
	\frac{\pl \mcal{H}(t,s_{t}^{*},u_{t}^{*},\lambda_{t}^{*})}{\pl u}=0. 
\end{align}
Hence, the following inequality is satisfied: 
\begin{align}
	J[u]-J[u^{*}]&\geq0. 
\end{align}
Therefore, $u^{*}$ is the optimal control function.
\end{proof}

\subsection{Relationship with Bellman's Dynamic Programming Principle}\label{appendix: DC-DPP}
From the Bellman's dynamic programming principle, the optimal control function $u^{*}$ is given as follows \cite{yong_stochastic_1999,vinter_optimal_2010}: 
\begin{align}
	u^{*}(t,s)=\argmin_{u}\mcal{H}\left(t,s,u,\frac{\pl V^{*}(t,s)}{\pl s}\right). 
	\label{eq: optimal control of DC DPP}
\end{align}
More specifically, the optimal control function is given by $u^{*}(t)=u^{*}(t,s_{t}^{*})$. 
$V^{*}(t,s)$ is the value function, which is the solution of the following Hamilton-Jacobi-Bellman (HJB) equation: 
\begin{align}
	-\frac{\pl V^{*}(t,s)}{\pl t}=\mcal{H}\left(t,s,u^{*},\frac{\pl V^{*}(t,s)}{\pl s}\right),
	\label{eq: HJB eq DC}
\end{align}
where $V^{*}(T,s)=g(s)$. 
The following proposition converts the Bellman's dynamic programming principle into the Pontryagin's minimum principle \cite{yong_stochastic_1999,vinter_optimal_2010}: 

\begin{prop}[\cite{yong_stochastic_1999,vinter_optimal_2010}]\label{prop: DPP DC}
We define $\lambda_{t}^{*}$ from $V^{*}(t,s)$ as follows: 
\begin{align}
	\lambda_{t}^{*}:=\frac{\pl V^{*}(t,s_{t}^{*})}{\pl s}, 
\end{align}
where $s_{t}^{*}$ is the solution of the state equation (\ref{eq: optimal state equation}). 
$\lambda_{t}^{*}$ satisfies the adjoint equation (\ref{eq: optimal adjoint equation}) from the HJB equation (\ref{eq: HJB eq DC}). 
\end{prop}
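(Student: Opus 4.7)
The plan is to differentiate the defining relation $\lambda_{t}^{*} := \pl V^{*}(t,s_{t}^{*})/\pl s$ along the optimal trajectory $s_{t}^{*}$, use the HJB equation~(\ref{eq: HJB eq DC}) to substitute for the mixed partial $\pl^{2} V^{*}/\pl t\,\pl s$, and then invoke the first-order optimality of $u^{*}$ in the Hamiltonian (an envelope-theorem step) to cancel the remaining cross terms.

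First, I would apply the chain rule to obtain $\dfrac{d\lambda_{t}^{*}}{dt} = \dfrac{\pl^{2} V^{*}(t,s_{t}^{*})}{\pl t\,\pl s} + \dfrac{\pl^{2} V^{*}(t,s_{t}^{*})}{\pl s^{2}}\,\dfrac{ds_{t}^{*}}{dt}$, where $ds_{t}^{*}/dt = b(t,s_{t}^{*},u_{t}^{*}) = \pl\mcal{H}/\pl \lambda$ by~(\ref{eq: optimal state equation}) and the definition of $\mcal{H}$.

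Second, I would differentiate the HJB equation~(\ref{eq: HJB eq DC}) with respect to $s$. Its right-hand side depends on $s$ directly, through the feedback $u^{*}(t,s)$ from~(\ref{eq: optimal control of DC DPP}), and through $\pl V^{*}/\pl s$. Taking the total derivative yields $-\dfrac{\pl^{2} V^{*}}{\pl t\,\pl s} = \dfrac{\pl\mcal{H}}{\pl s} + \dfrac{\pl\mcal{H}}{\pl u}\,\dfrac{\pl u^{*}}{\pl s} + \dfrac{\pl\mcal{H}}{\pl \lambda}\,\dfrac{\pl^{2} V^{*}}{\pl s^{2}}$, with all Hamiltonian derivatives evaluated at $(t,s,u^{*}(t,s),\pl V^{*}/\pl s)$. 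The envelope step is the crux: since $u^{*}(t,s)$ is defined as a minimizer of $\mcal{H}$ in~(\ref{eq: optimal control of DC DPP}), the interior first-order condition $\pl\mcal{H}/\pl u = 0$ at $u=u^{*}$ eliminates the middle term. Substituting into the chain-rule expression and using $\pl\mcal{H}/\pl \lambda = b = ds_{t}^{*}/dt$ makes the two Hessian contributions cancel exactly, leaving $d\lambda_{t}^{*}/dt = -\pl\mcal{H}(t,s_{t}^{*},u_{t}^{*},\lambda_{t}^{*})/\pl s$, which is the adjoint equation~(\ref{eq: optimal adjoint equation}). The terminal condition is immediate: $\lambda_{T}^{*} = \pl V^{*}(T,s_{T}^{*})/\pl s = \pl g(s_{T}^{*})/\pl s$ from $V^{*}(T,s) = g(s)$.

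The main obstacle is the envelope step, together with the regularity it presupposes: one needs $V^{*}$ to be sufficiently smooth (classically $C^{1,2}$) and the feedback $u^{*}(t,s)$ to be differentiable in $s$, so that $\pl u^{*}/\pl s$ is meaningful and the interior first-order condition $\pl\mcal{H}/\pl u=0$ at $u=u^{*}$ is legitimate. Under the standard classical-solution assumption for the HJB equation~(\ref{eq: HJB eq DC}), the argument is routine; if smoothness fails, one instead uses the adjoint structure via Lemma~\ref{lemm: J-J DC} directly, avoiding differentiation of $u^{*}$ altogether.
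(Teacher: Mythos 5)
Your proof is correct and follows essentially the same route as the paper's: differentiate the HJB equation (\ref{eq: HJB eq DC}) with respect to $s$, apply the chain rule to $\lambda_{t}^{*}=\pl V^{*}(t,s_{t}^{*})/\pl s$ along the optimal trajectory, and cancel the Hessian contributions using $ds_{t}^{*}/dt=b=\pl\mcal{H}/\pl\lambda$, with the terminal condition from $V^{*}(T,s)=g(s)$. The only difference is that you make the envelope step ($\pl\mcal{H}/\pl u=0$ at $u=u^{*}$, needed to drop the $\pl_{u}\mcal{H}\,\pl_{s}u^{*}$ term) and its regularity requirements explicit, whereas the paper leaves this implicit when differentiating the minimized Hamiltonian.
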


\begin{proof}
We firstly define 
\begin{align}
	\Lambda^{*}(t,s):=\frac{\pl V^{*}(t,s)}{\pl s}. 
\end{align}
Differentiating the HJB equation (\ref{eq: HJB eq DC}) with respect to $s$, 
the following equation is obtained: 
\begin{align}
	-\frac{\pl \Lambda^{*}(t,s)}{\pl t}=\frac{\pl \mcal{H}\left(t,s,u^{*},\Lambda^{*}\right)}{\pl s}
	+\left(\frac{\pl \Lambda^{*}(t,s)}{\pl s}\right)^{\top}b(t,s^{*},u^{*}),
	\label{eq: Master eq DC}
\end{align}
where $\Lambda^{*}(T,s)=\pl g(s)/\pl s$. 
Then the derivative of $\lambda_{t}^{*}=\Lambda^{*}(t,s_{t}^{*})$ with respect to $t$ can be calculated as follows: 
\begin{align}
	\frac{d \lambda_{t}^{*}}{d t}=\frac{\pl\Lambda^{*}(t,s_{t}^{*})}{\pl t}+\left(\frac{\pl \Lambda^{*}(t,s_{t}^{*})}{\pl s}\right)^{\top}\frac{ds_{t}^{*}}{dt}. 
	\label{eq: the derivative of lambda(t)}
\end{align}
By substituting (\ref{eq: Master eq DC}) into (\ref{eq: the derivative of lambda(t)}), the following equation is obtained: 
\begin{align}
	-\frac{d \lambda_{t}^{*}}{d t}=\frac{\pl \mcal{H}\left(t,s,u^{*},\lambda^{*}\right)}{\pl s}
	-\left(\frac{\pl \Lambda^{*}(t,s_{t}^{*})}{\pl s}\right)^{\top}\underbrace{\left(\frac{ds_{t}^{*}}{dt}-b(t,s^{*},u^{*})\right)}_{(*)}. 
	\label{eq: the derivative of lambda(t) ver2}
\end{align}
From the state equation (\ref{eq: optimal state equation}), $(*)=0$ is satisfied. 
Therefore, $\lambda^{*}(t)$ satisfies the adjoint equation (\ref{eq: optimal adjoint equation}).
\end{proof}

\section{Mean-Field Stochastic Control}\label{appendix: MFC}
In this section, we show that the system of HJB-FP equations  in MFSC corresponds to the Pontryagin's minimum principle on the probability density function space.  
Although the relationship between the system of HJB-FP equations and the Pontryagin's minimum principle has been mentioned briefly in MFSC \cite{crisan_master_2014,bensoussan_master_2015,bensoussan_interpretation_2017}, its details have not yet been investigated. 
In this section, we resolve this problem by deriving the system of HJB-FP equations in a similar way as \ref{appendix: DC}. 

\subsection{Problem Formulation}\label{appendix: MFC-Problem formulation}
In this subsection, we formulate MFSC \cite{bensoussan_mean_2013,carmona_probabilistic_2018,carmona_probabilistic_2018-1}. 
The state of the system $s_{t}\in\mb{R}^{d_{s}}$ at time $t\in[0,T]$ evolves by the following stochastic differential equation (SDE): 
\begin{align}
	ds_{t}=b(t,s_{t},p_{t},u_{t})dt+\sigma(t,s_{t},p_{t},u_{t})d\omega_{t},
	\label{eq: state SDE MFC}
\end{align}
where $s_{0}$ obeys $p_{0}(s_{0})$, $p_{t}(s):=p(t,s)$ is the probability density function of the state $s$, $u_{t}(s):=u(t,s)\in\mb{R}^{d_{u}}$ is the control, and $\omega_{t}\in\mb{R}^{d_{\omega}}$ is the standard Wiener process. 
The objective function is given by the following expected cumulative cost function: 
\begin{align}	
	J[u]:=\mb{E}_{p(s_{0:T};u)}\left[\int_{0}^{T}f(t,s_{t},p_{t},u_{t})dt+g(s_{T},p_{T})\right],
	\label{eq: OF of MFC}
\end{align}
where $f$ is the cost function, $g$ is the terminal cost function, $p(s_{0:T};u)$ is the probability of $s_{0:t}:=\{s_{\tau}|\tau\in[0,t]\}$ given $u$ as a parameter, and $\mb{E}_{p}[\cdot]$ is the expectation with respect to the probability $p$. 
MFSC is the problem to find the optimal control function $u^{*}$ that minimizes the expected cumulative cost function $J[u]$ as follows: 
\begin{align}	
	u^{*}=\argmin_{u}J[u].
	\label{eq: MFC}
\end{align}

\subsection{Hamiltonian}\label{appendix: MFC-Hamiltonian}
Before we show the optimality conditions, we define the Hamiltonian as follows: 
\begin{align}
	\mcal{H}\left(t,s,p,u,w\right):=f(t,s,p,u)+\mcal{L}_{u}w(t,s), 
	\label{eq: Hamiltonian MFC} 
\end{align}
where $^\forall w:[0,T]\times\mb{R}^{d_{s}}\to\mb{R}$, and $\mcal{L}_{u}$ is the backward diffusion operator, which is defined as follows: 
\begin{align}
	\mcal{L}_{u}w(t,s)&:=\sum_{i=1}^{d_{s}}b_{i}(t,s,p,u)\frac{\pl w(t,s)}{\pl s_{i}}+\frac{1}{2}\sum_{i,j=1}^{d_{s}}D_{ij}(t,s,p,u)\frac{\pl^{2} w(t,s)}{\pl s_{i}\pl s_{j}}, 
	\label{eq: backward diffusion operator MFC}
\end{align}
where $D(t,s,p,u):=\sigma(t,s,p,u)\sigma^{\top}(t,s,p,u)$. 
We also define the expected Hamiltonian and the expected terminal cost function as follows: 
\begin{align}
	\bar{\mcal{H}}(t,p,u,w)&:=\mb{E}_{p(s)}\left[\mcal{H}\left(t,s,p,u,w\right)\right],\label{eq: expected Hamiltonian MFC}\\
	\bar{g}(p)&:=\mb{E}_{p(s)}\left[g(s,p)\right].\label{eq: expected terminal cost MFC}
\end{align}
The state SDE (\ref{eq: state SDE MFC}) can be converted into the following Fokker-Planck (FP) equation: 
\begin{align}
	\frac{\pl p(t,s)}{\pl t}=\mcal{L}_{u}^{\dagger}p(t,s),
	\label{eq: FP eq MFC}
\end{align}
where $p(0,s)=p_{0}(s)$, and $\mcal{L}_{u}^{\dag}$ is the forward diffusion operator, which is defined as follows: 
\begin{align}
	\mcal{L}_{u}^{\dag}p(t,s)&:=-\sum_{i=1}^{d_{s}}\frac{\pl (b_{i}(t,s,p,u)p(t,s))}{\pl s_{i}}+\frac{1}{2}\sum_{i,j=1}^{d_{s}}\frac{\pl^{2}(D_{ij}(t,s,p,u)p(t,s))}{\pl s_{i}\pl s_{j}}. 
	\label{eq: forward diffusion operator MFC}
\end{align}
We note that $\mcal{L}_{u}^{\dag}$ is the conjugate of $\mcal{L}_{u}$ as follows: 
\begin{align}
	\int w(t,s)\mcal{L}_{u}^{\dag}p(t,s)ds=\int p(t,s)\mcal{L}_{u}w(t,s)ds.
	\label{eq: conjugate MFC}
\end{align}
The following lemma shows the relationship between the objective function $J[u]$ and the expected Hamiltonian $\bar{\mcal{H}}(t,p,u,w)$, 
which is significant for the optimality conditions. 

\begin{lemm}\label{lemm: J-J MFC}
Let $^\forall u: [0,T]\times\mb{R}^{d_{s}}\to\mb{R}^{d_{u}}$ and $^\forall u': [0,T]\times\mb{R}^{d_{s}}\to\mb{R}^{d_{u}}$ be the arbitrary control functions, and let $p$ and $p'$ be the probability density functions of the state driven by the control functions $u$ and $u'$, respectively.  
Then $J[u]-J[u']$ satisfies the following equation: 
\begin{align}
	J[u]-J[u']
	&=\int_{0}^{T}\left(\bar{\mcal{H}}(t,p,u,w')-\bar{\mcal{H}}(t,p',u',w')-\int\frac{\delta \bar{\mcal{H}}\left(t,p',u',w'\right)}{\delta p}(s)\left(p(t,s)-p'(t,s)\right)ds\right)dt\nonumber\\
	&\ \ \ +\bar{g}(p)-\bar{g}(p')
	-\int\frac{\delta \bar{g}(p')}{\delta p}(s)\left(p(T,s)-p'(T,s)\right)ds,
	\label{eq: J-J MFC}
\end{align}
where $w'$ is the solution of the following Hamilton-Jacobi-Bellman (HJB) equation: 
\begin{align}
	-\frac{\pl w'(t,s)}{\pl t}=\frac{\delta\bar{\mcal{H}}\left(t,p',u',w'\right)}{\delta p}(s), 
	\label{eq: HJB eq MFC}
\end{align}
where $w'(T,s)=(\delta\bar{g}(p')/\delta p)(s)$. 
\end{lemm}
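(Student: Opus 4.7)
The plan is to mirror the derivation used in Lemma \ref{lemm: J-J DC} for deterministic control and Lemma \ref{lemm: J-J ML-POSC} for ML-POSC, but to promote partial derivatives to functional derivatives, since in MFSC the expected Hamiltonian $\bar{\mcal{H}}$ and the expected terminal cost $\bar{g}$ depend nonlinearly on $p$. The single adjoint field $w'$, defined by the HJB equation (\ref{eq: HJB eq MFC}) along the primed trajectory, will play the role that $\lambda'$ plays in the deterministic case, and the subtraction $J[u]-J[u']$ will expose the difference between the two trajectories as a first-order functional Taylor remainder in $p$ around $p'$.

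First I would rewrite the running-cost contribution to $J[u]$ using the definition of the Hamiltonian (\ref{eq: Hamiltonian MFC}):
\begin{align*}
\mb{E}_{p(t,s)}\left[f(t,s,p,u)\right] = \bar{\mcal{H}}(t,p,u,w') - \int p(t,s)\,\mcal{L}_{u}w'(t,s)\,ds.
\end{align*}
Applying the conjugacy relation (\ref{eq: conjugate MFC}) followed by the FP equation (\ref{eq: FP eq MFC}) turns the last integral into $\int w'(t,s)\,\frac{\pl p(t,s)}{\pl t}\,ds$. Integration by parts in $t$ over $[0,T]$ yields boundary terms at $t=0$ and $t=T$ together with a bulk term involving $\frac{\pl w'(t,s)}{\pl t}$; substituting the HJB equation (\ref{eq: HJB eq MFC}) and its terminal condition $w'(T,\cdot)=(\delta\bar{g}(p')/\delta p)(\cdot)$ converts these into $\int (\delta\bar{g}(p')/\delta p)(s)\,p(T,s)\,ds$, $\int w'(0,s)\,p_{0}(s)\,ds$, and $\int_{0}^{T}\!\int (\delta\bar{\mcal{H}}(t,p',u',w')/\delta p)(s)\,p(t,s)\,ds\,dt$, respectively. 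This gives a closed-form representation of $J[u]$ in terms of quantities evaluated at $(p',u',w')$ together with $\bar{\mcal{H}}(\cdot,p,u,w')$ and $\bar{g}(p_{T})$. The identical manipulation applied to $J[u']$ yields the analogous representation with $p$ replaced by $p'$ throughout.

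Subtracting the two representations, the $t=0$ boundary contributions cancel because $p(0,\cdot)=p'(0,\cdot)=p_{0}$. The $t=T$ boundary terms combine with $\bar{g}(p_{T})-\bar{g}(p'_{T})$ to produce exactly the functional Taylor remainder $\bar{g}(p)-\bar{g}(p')-\int(\delta\bar{g}(p')/\delta p)(s)(p(T,s)-p'(T,s))\,ds$, and the bulk terms assemble into the analogous integrand in (\ref{eq: J-J MFC}). No estimate is required; only bookkeeping.

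The main subtlety, and the only real obstacle I anticipate, is the deliberate choice to linearize both $J[u]$ and $J[u']$ against the \emph{same} adjoint $w'$ rather than introducing two separate HJB solutions. This is the MFSC analogue of evaluating against $\lambda'$ alone in Lemma \ref{lemm: J-J DC}, and it is what forces the appearance of the functional derivatives $\delta\bar{\mcal{H}}/\delta p$ and $\delta\bar{g}/\delta p$ evaluated at the primed data. In ML-POSC (Lemma \ref{lemm: J-J ML-POSC}) the expected Hamiltonian is linear in $p$, so $(\delta\bar{\mcal{H}}(\cdot,p',u',w')/\delta p)(s)$ collapses to $\mcal{H}(t,s,u',w')$ and the Taylor remainder terms vanish identically; the MFSC statement above is thus the natural nonlinear-in-$p$ generalization of that earlier lemma, and carrying out the functional-derivative bookkeeping carefully is all that remains.
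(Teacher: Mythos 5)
Your proposal is correct and follows essentially the same route as the paper's proof: rewrite the running cost via the Hamiltonian (\ref{eq: Hamiltonian MFC}), use the conjugacy (\ref{eq: conjugate MFC}) and the FP equation (\ref{eq: FP eq MFC}) to produce $\int w'\,\pl p/\pl t\,ds$, integrate by parts in time, and substitute the HJB equation (\ref{eq: HJB eq MFC}) with its terminal condition, the $t=0$ terms cancelling since $p(0,\cdot)=p'(0,\cdot)=p_{0}$. The only difference is that you linearize $J[u]$ and $J[u']$ separately against the common adjoint $w'$ before subtracting, while the paper subtracts first and then manipulates the difference; this is the same bookkeeping in a different order.
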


\begin{proof}
$J[u]-J[u']$ can be calculated as follows: 
\begin{align}
	J[u]-J[u']
	&=\mb{E}_{p(s_{0:T})}\left[\int_{0}^{T}f(t,s_{t},p_{t},u_{t})dt+g(s_{T},p_{T})\right]\nonumber\\
	&\ \ \ -\mb{E}_{p'(s_{0:T})}\left[\int_{0}^{T}f(t,s_{t},p_{t}',u_{t}')dt+g(s_{T},p_{T}')\right]\nonumber\\
	&=\mb{E}_{p(s_{0:T})}\left[\int_{0}^{T}\left(\mcal{H}(t,s_{t},p_{t},u_{t},w')-\mcal{L}_{u_{t}}w'(t,s_{t})\right)dt+g(s_{T},p_{T})\right]\nonumber\\
	&\ \ \ -\mb{E}_{p'(s_{0:T})}\left[\int_{0}^{T}\left(\mcal{H}(t,s_{t},p_{t}',u_{t}',w')-\mcal{L}_{u_{t}'}w'(t,s_{t})\right)dt+g(s_{T},p_{T}')\right]\nonumber\\
	&=\int_{0}^{T}\left(\bar{\mcal{H}}(t,p,u,w')-\bar{\mcal{H}}(t,p',u',w')\right)dt\nonumber\\
	&\ \ \ -\int_{0}^{T}\left(\mb{E}_{p(t,s)}\left[\mcal{L}_{u}w'(t,s)\right]-\mb{E}_{p'(t,s)}\left[\mcal{L}_{u'}w'(t,s)\right]\right)dt+\bar{g}(p)-\bar{g}(p').
\end{align}
Because $\mcal{L}_{u_{t}}$ and $\mcal{L}_{u_{t}'}$ are the conjugates of $\mcal{L}_{u_{t}}^{\dag}$ and $\mcal{L}_{u_{t}'}^{\dag}$, respectively, 
\begin{align}
	J[u]-J[u']
	&=\int_{0}^{T}\left(\bar{\mcal{H}}(t,p,u,w')-\bar{\mcal{H}}(t,p',u',w')\right)dt\nonumber\\
	&\ \ \ -\int_{0}^{T}\int \left(\mcal{L}_{u}^{\dag}p(t,s)-\mcal{L}_{u'}^{\dag}p'(t,s)\right)w'(t,s)dsdt+\bar{g}(p)-\bar{g}(p').
\end{align}
From the FP equation (\ref{eq: FP eq MFC}), 
\begin{align}
	J[u]-J[u']
	&=\int_{0}^{T}\left(\bar{\mcal{H}}(t,p,u,w')-\bar{\mcal{H}}(t,p',u',w')\right)dt\nonumber\\
	&\ \ \ -\int_{0}^{T}\int \frac{\pl\left(p(t,s)-p'(t,s)\right)}{\pl t}w'(t,s)dsdt+\bar{g}(p)-\bar{g}(p').
\end{align}
From the integration by parts and $p(0,s)-p'(0,s)=p_{0}(s)-p_{0}(s)=0$, 
\begin{align}
	J[u]-J[u']
	&=\int_{0}^{T}\left(\bar{\mcal{H}}(t,p,u,w')-\bar{\mcal{H}}(t,p',u',w')\right)dt\nonumber\\
	&\ \ \ +\int_{0}^{T}\int \left(p(t,s)-p'(t,s)\right)\frac{\pl w'(t,s)}{\pl t}dsdt\nonumber\\
	&\ \ \ +\bar{g}(p)-\bar{g}(p')-\int \left(p(T,s)-p'(T,s)\right)w'(T,s)ds.
\end{align}
From the HJB equation (\ref{eq: HJB eq MFC}),  (\ref{eq: J-J MFC}) is obtained. 
\end{proof}

\subsection{Necessary Condition}\label{appendix: MFC-NC}
We show the necessary condition of the optimal control function of MFSC that corresponds to the Pontryagin's minimum principle on the probability density function space. 

\begin{theo}\label{theo: NC MFC}
In MFSC, the optimal control function $u^{*}$ satisfies the following equation: 
\begin{align}
	u^{*}(t,s)=\argmin_{u}\mcal{H}\left(t,s,p^{*},u,w^{*}\right),\ a.s.\ ^\forall t\in[0,T],\ ^\forall s\in\mb{R}^{d_{s}},
	\label{eq: optimal control of MFC}
\end{align}
where $p^{*}(t,s)$ is the solution of the following FP equation: 
\begin{align}
	\frac{\pl p^{*}(t,s)}{\pl t}=\frac{\delta\bar{\mcal{H}}(t,p^{*},u^{*},w^{*})}{\delta w}(s),
	\label{eq: optimal FP eq MFC}
\end{align}
where $p^{*}(0,s)=p_{0}(s)$. 
Because $(\delta\bar{\mcal{H}}(t,p^{*},u^{*},w^{*})/\delta w)(s)=\mcal{L}_{u^{*}}^{\dagger}p^{*}(t,s)$, (\ref{eq: optimal FP eq MFC}) is consistent with (\ref{eq: FP eq MFC}). 
$w^{*}(t,s)$ is the solution of the following HJB equation: 
\begin{align}
	-\frac{\pl w^{*}(t,s)}{\pl t}=\frac{\delta \bar{\mcal{H}}(t,p^{*},u^{*},w^{*})}{\delta p}(s), 
	\label{eq: optimal HJB eq MFC}
\end{align}
where $w^{*}(T,s)=(\delta\bar{g}(p^{*})/\delta p)(s)$. 
\end{theo}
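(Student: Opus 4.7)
The plan is to mirror the proof of Proposition~3 in Appendix I, using Lemma~\ref{lemm: J-J MFC} in place of Lemma~\ref{lemm: J-J DC} and adapting the needle-variation technique to the probability density function space. First I fix an arbitrary reference $u:[0,T]\times\mb{R}^{d_{s}}\to\mb{R}^{d_{u}}$ and an arbitrary time $t'\in[0,T]$, and introduce the spike perturbation
\begin{equation*}
u^{\ve}(t,s):=\begin{cases}u(t,s)&t\in E_{\ve}:=[t',t'+\ve],\\ u^{*}(t,s)&t\notin E_{\ve},\end{cases}
\end{equation*}
with associated density $p^{\ve}$ solving the FP equation (\ref{eq: FP eq MFC}) driven by $u^{\ve}$. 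Applying Lemma~\ref{lemm: J-J MFC} with $u'=u^{*}$, $p'=p^{*}$, $w'=w^{*}$ expresses $J[u^{\ve}]-J[u^{*}]$ in terms of $\bar{\mcal{H}}$, $\bar{g}$, and their first functional derivatives in $p$.

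The key step is to extract the $O(\ve)$ contribution to this expression. Since $u^{\ve}=u^{*}$ on $[0,t']$, one has $p^{\ve}=p^{*}$ there, and a standard stability estimate for the FP equation yields $\sup_{t}\|p^{\ve}(t,\cdot)-p^{*}(t,\cdot)\|=O(\ve)$. A first-order Taylor expansion in $p$ around $p^{*}$ rewrites the integrand of Lemma~\ref{lemm: J-J MFC} as
\begin{equation*}
\bar{\mcal{H}}(t,p^{*},u^{\ve},w^{*})-\bar{\mcal{H}}(t,p^{*},u^{*},w^{*})+\int\!\left[\tfrac{\delta\bar{\mcal{H}}(t,p^{*},u^{\ve},w^{*})}{\delta p}(s)-\tfrac{\delta\bar{\mcal{H}}(t,p^{*},u^{*},w^{*})}{\delta p}(s)\right]\!(p^{\ve}-p^{*})(t,s)\,ds+O(\|p^{\ve}-p^{*}\|^{2}).
\end{equation*}
Outside $E_{\ve}$ the expression vanishes identically since $u^{\ve}=u^{*}$; inside $E_{\ve}$ the first term is $O(1)$ on a set of measure $\ve$, while the second is $O(\ve)$ and the third $O(\ve^{2})$. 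The terminal correction is similarly $O(\|p^{\ve}-p^{*}\|^{2})=o(\ve)$. Collecting the estimates yields
\begin{equation*}
J[u^{\ve}]-J[u^{*}]=\int_{E_{\ve}}\!\!\left(\bar{\mcal{H}}(t,p^{*},u,w^{*})-\bar{\mcal{H}}(t,p^{*},u^{*},w^{*})\right)dt+o(\ve).
\end{equation*}

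Dividing by $\ve$ and letting $\ve\downarrow 0$ at a Lebesgue point $t'$, optimality of $u^{*}$ implies $\bar{\mcal{H}}(t',p^{*}_{t'},u(t',\cdot),w^{*}_{t'})\geq\bar{\mcal{H}}(t',p^{*}_{t'},u^{*}(t',\cdot),w^{*}_{t'})$ for every admissible reference $u(t',\cdot)$. Because $\bar{\mcal{H}}(t,p,u,w)=\int\mcal{H}(t,s,p,u(s),w)p(s)\,ds$, minimising over functions $u:\mb{R}^{d_{s}}\to\mb{R}^{d_{u}}$ is equivalent to minimising the integrand pointwise in $s$ on the support of $p^{*}_{t}$, which is exactly (\ref{eq: optimal control of MFC}). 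The FP equation (\ref{eq: optimal FP eq MFC}) then follows by specialising (\ref{eq: FP eq MFC}) to $u^{*}$ together with the identity $(\delta\bar{\mcal{H}}/\delta w)(s)=\mcal{L}_{u}^{\dag}p(s)$, and the HJB equation (\ref{eq: optimal HJB eq MFC}) follows from the defining equation of $w^{*}$ in Lemma~\ref{lemm: J-J MFC}.

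The main obstacle I anticipate is the rigorous control of the $o(\ve)$ remainder. Unlike Proposition~3, where the Hamiltonian is linear in $\lambda$ and contains no density, here $\bar{\mcal{H}}$ and $\bar{g}$ depend nonlinearly on $p$, so one must verify under explicit regularity hypotheses on $b,\sigma,f,g$ that the second-order Gateaux remainders are genuinely $O(\|p^{\ve}-p^{*}\|^{2})$ and that the mixed $u$-$p$ correction on $E_{\ve}$ contributes only $O(\ve^{2})$ rather than $O(\ve)$. Once these analytic estimates are in place, the rest of the argument is a direct transcription of the deterministic proof.
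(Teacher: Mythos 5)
Your proposal is correct and follows essentially the same route as the paper: both apply the needle-variation argument to Lemma \ref{lemm: J-J MFC} with reference $(u^{*},p^{*},w^{*})$, use optimality of $u^{*}$ to force the localized Hamiltonian difference to be nonnegative, and read off the FP and HJB equations from (\ref{eq: FP eq MFC}) and (\ref{eq: HJB eq MFC}). The only cosmetic difference is that the paper spikes the control on a set $E_{\ve_{1}}\times F_{\ve_{2}}$ in both time and state, obtaining the pointwise condition at $(t',s')$ directly, whereas you spike in time only and then localize in $s$ by noting that minimizing $\bar{\mcal{H}}(t,p^{*},\cdot,w^{*})=\int\mcal{H}(t,s,p^{*},u(s),w^{*})p^{*}(s)ds$ over control functions is equivalent to $p^{*}$-a.s.\ pointwise minimization; your explicit tracking of the $o(\ve)$ remainders matches (and if anything sharpens) the paper's formal limit $\ve_{1},\ve_{2}\to0$.
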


\begin{proof}
We define the control function
\begin{align}
	u^{\ve}(t,z):=\begin{cases}
		u^{*}(t,s)&(t,s)\in([0,T]\times\mb{R}^{d_{s}})\backslash(E_{\ve_{1}}\times F_{\ve_{2}}),\\
		u(t,s)&(t,s)\in E_{\ve_{1}}\times F_{\ve_{2}},\\
	\end{cases}
\end{align}
where $E_{\ve_{1}}:=[t',t'+\ve_{1}]\subseteq[0,T]$, $F_{\ve_{2}}:=[s',s'+\ve_{2}]\subseteq\mb{R}^{d_{s}}$, and $^\forall u:[0,T]\times\mb{R}^{d_{s}}\to\mb{R}^{d_{u}}$. 
From the Lemma \ref{lemm: J-J MFC}, $J[u^{\ve}]-J[u^{*}]$ can be calculated as follows: 
\begin{align}
	J[u^{\ve}]-J[u^{*}]
	&=\int_{0}^{T}\left(\bar{\mcal{H}}(t,p^{\ve},u^{\ve},w^{*})-\bar{\mcal{H}}(t,p^{*},u^{*},w^{*})-\int\frac{\delta\bar{\mcal{H}}\left(t,p^{*},u^{*},w^{*}\right)}{\delta p}(s)\left(p^{\ve}(t,s)-p^{*}(t,s)\right)ds\right)dt\nonumber\\
	&\ \ \ +\bar{g}(p^{\ve})-\bar{g}(p^{*})-\int\frac{\delta\bar{g}(p^{*})}{\delta p}(s)\left(p^{\ve}(T,s)-p^{*}(T,s)\right)ds\nonumber\\
	&=\int_{0}^{T}\left(\bar{\mcal{H}}(t,p^{\ve},u^{*},w^{*})-\bar{\mcal{H}}(t,p^{*},u^{*},w^{*})-\int\frac{\delta\bar{\mcal{H}}\left(t,p^{*},u^{*},w^{*}\right)}{\delta p}(s)\left(p^{\ve}(t,s)-p^{*}(t,s)\right)ds\right)dt\nonumber\\
	&\ \ \ +\bar{g}(p^{\ve})-\bar{g}(p^{*})-\int\frac{\delta\bar{g}(p^{*})}{\delta p}(s)\left(p^{\ve}(T,s)-p^{*}(T,s)\right)ds\nonumber\\
	&\ \ \ +\int_{E_{\ve_{1}}}\int_{F_{\ve_{2}}}\left(\mcal{H}(t,s,p^{\ve},u,w^{*})-\mcal{H}(t,s,p^{\ve},u^{*},w^{*})\right)p^{\ve}(t,s)dsdt.
\end{align}
Letting $\ve_{1}\to0$ and $\ve_{2}\to0$, 
\begin{align}
	J[u^{\ve}]-J[u^{*}]
	&=\int_{0}^{T}\left(\int\frac{\delta\bar{\mcal{H}}\left(t,p^{*},u^{*},w^{*}\right)}{\delta p}(s)\left(p^{\ve}(t,s)-p^{*}(t,s)\right)ds\right.\nonumber\\
	&\ \ \ \left.-\int\frac{\delta\bar{\mcal{H}}\left(t,p^{*},u^{*},w^{*}\right)}{\delta p}(s)\left(p^{\ve}(t,s)-p^{*}(t,s)\right)ds\right)dt\nonumber\\
	&\ \ \ +\int\frac{\delta\bar{g}(p^{*})}{\delta p}(s)\left(p^{\ve}(T,s)-p^{*}(T,s)\right)ds-\int\frac{\delta\bar{g}(p^{*})}{\delta p}(s)\left(p^{\ve}(T,s)-p^{*}(T,s)\right)ds\nonumber\\
	&\ \ \ +\left(\mcal{H}(t',s',p^{*},u,w^{*})-\mcal{H}(t',s',p^{*},u^{*},w^{*})\right)p^{*}(t',s')dsdt\nonumber\\
	&=\left(\mcal{H}(t',s',p^{*},u,w^{*})-\mcal{H}(t',s',p^{*},u^{*},w^{*})\right)p^{*}(t',s')dsdt.
\end{align}
Because $u^{*}$ is the optimal control function, the following inequality is satisfied: 
\begin{align}
	0\leq J[u^{\ve}]-J[u^{*}]
	&=\left(\mcal{H}(t',s',p^{*},u,w^{*})-\mcal{H}(t',s',p^{*},u^{*},w^{*})\right)p^{*}(t',s')dsdt.
\end{align}
Therefore, (\ref{eq: optimal control of MFC}) is obtained. 
\end{proof}

\subsection{Sufficient Condition}\label{appendix: MFC-SC}
The Pontryagin's minimum principle becomes a necessary and sufficient condition if the expected Hamiltonian and the expected terminal cost function are convex as follows: 

\begin{prop}\label{prop: SC MFC}
Assume that the expected Hamiltonian $\bar{\mcal{H}}(t,p,u,w)$ is convex with respect to $p$ and $u$ and the expected terminal cost function $\bar{g}(p)$ is convex with respect to $p$. 
If the control function $u^{*}$ satisfies (\ref{eq: optimal control of MFC}), it is the optimal control function of MFSC. 
\end{prop}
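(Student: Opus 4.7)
The plan is to mirror the proof of Proposition \ref{prop: SC DC} for the deterministic case, replacing partial derivatives by functional derivatives on the probability density space. Fix an arbitrary admissible control $u$ and let $p$ be the density it generates. Applying Lemma \ref{lemm: J-J MFC} with $u' = u^*$ (so $p' = p^*$ and $w' = w^*$) gives
\begin{align*}
	J[u] - J[u^*]
	&= \int_0^T \left(\bar{\mcal{H}}(t,p,u,w^*) - \bar{\mcal{H}}(t,p^*,u^*,w^*) - \int \frac{\delta \bar{\mcal{H}}(t,p^*,u^*,w^*)}{\delta p}(s)(p(t,s) - p^*(t,s))\,ds\right) dt \\
	&\quad + \bar{g}(p) - \bar{g}(p^*) - \int \frac{\delta \bar{g}(p^*)}{\delta p}(s)(p(T,s) - p^*(T,s))\,ds.
\end{align*}

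Next I would invoke the two convexity hypotheses. Joint convexity of $\bar{\mcal{H}}$ in $(p,u)$ produces the subgradient inequality
\begin{align*}
	\bar{\mcal{H}}(t,p,u,w^*)
	&\geq \bar{\mcal{H}}(t,p^*,u^*,w^*)
	+ \int \frac{\delta \bar{\mcal{H}}(t,p^*,u^*,w^*)}{\delta p}(s)(p(t,s) - p^*(t,s))\,ds \\
	&\quad + \mb{E}_{p^*(t,s)}\!\left[\left(\frac{\pl \mcal{H}(t,s,p^*,u^*,w^*)}{\pl u}\right)^{\!\top}\!(u(t,s) - u^*(t,s))\right],
\end{align*}
while convexity of $\bar{g}$ in $p$ gives $\bar{g}(p) - \bar{g}(p^*) \geq \int (\delta \bar{g}(p^*)/\delta p)(s)(p(T,s) - p^*(T,s))\,ds$. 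Substituting both into the identity from Lemma \ref{lemm: J-J MFC}, the functional-derivative terms cancel exactly, leaving
\begin{align*}
	J[u] - J[u^*] \geq \int_0^T \mb{E}_{p^*(t,s)}\!\left[\left(\frac{\pl \mcal{H}(t,s,p^*,u^*,w^*)}{\pl u}\right)^{\!\top}\!(u(t,s) - u^*(t,s))\right] dt.
\end{align*}

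Finally, the pointwise minimization condition (\ref{eq: optimal control of MFC}) implies the stationary condition $(\pl \mcal{H}/\pl u)(t,s,p^*,u^*,w^*) = 0$ for a.e.\ $(t,s)$ in the support of $p^*$, so the integrand on the right vanishes and $J[u] \geq J[u^*]$, proving the optimality of $u^*$.

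The delicate step is the subgradient inequality for $\bar{\mcal{H}}$: because $\mcal{H}$ depends on $p$ both through the weighting density in the expectation and through the coefficients $b$, $D$, $f$, one must check that the stated joint convexity of $\bar{\mcal{H}}$ in $(p,u)$ at fixed $w$ truly yields a single linear lower bound combining the $p$-functional derivative with a pointwise gradient in $u$; this is routine in the linear-in-$p$ ML-POSC setting but requires care in the genuinely nonlinear MFSC setting. A secondary technicality is the unconstrained domain of the control function $u$: the pointwise Euler--Lagrange condition needs the interior-minimum hypothesis on $\mcal{H}$ in $u$, without which one must reformulate the final step as a variational inequality rather than a gradient equation.
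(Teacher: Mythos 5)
Your proposal is correct and follows essentially the same route as the paper's own proof: apply Lemma \ref{lemm: J-J MFC} with $u'=u^{*}$, use the convexity of $\bar{\mcal{H}}$ in $(p,u)$ and of $\bar{g}$ in $p$ to cancel the functional-derivative terms, identify the remaining $u$-directional term with $\mb{E}_{p^{*}(t,s)}[(\pl\mcal{H}/\pl u)^{\top}(u-u^{*})]$, and conclude via the stationarity condition implied by (\ref{eq: optimal control of MFC}). The caveats you flag (the interpretation of the $u$-functional derivative and the interior-minimum assumption behind $\pl\mcal{H}/\pl u=0$) are implicit in the paper's argument as well, so no gap distinguishes your proof from theirs.
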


\begin{proof}
We define the arbitrary control function $^\forall u: [0,T]\times\mb{R}^{d_{s}}\to\mb{R}^{d_{u}}$. 
From the Lemma \ref{lemm: J-J MFC}, $J[u]-J[u^{*}]$ is given by the following equation: 
\begin{align}
	J[u]-J[u^{*}]
	&=\int_{0}^{T}\left(\bar{\mcal{H}}(t,p,u,w^{*})-\bar{\mcal{H}}(t,p^{*},u^{*},w^{*})-\int\frac{\delta \bar{\mcal{H}}\left(t,p^{*},u^{*},w^{*}\right)}{\delta p}(s)\left(p(t,s)-p^{*}(t,s)\right)ds\right)dt\nonumber\\
	&\ \ \ +\bar{g}(p)-\bar{g}(p^{*})
	-\int\frac{\delta \bar{g}(p^{*})}{\delta p}(s)\left(p(T,s)-p^{*}(T,s)\right)ds.
\end{align}
Because $\bar{\mcal{H}}(t,p,u,w)$ is convex with respect to $p$ and $u$ and $\bar{g}(p)$ is convex with respect to $p$, the following inequalities are satisfied: 
\begin{align}
	\bar{\mcal{H}}(t,p,u,w^{*})
	&\geq\bar{\mcal{H}}(t,p^{*},u^{*},w^{*})+\int\frac{\delta\bar{\mcal{H}}(t,p^{*},u^{*},w^{*})}{\delta p}(s)(p(t,s)-p^{*}(t,s))ds\nonumber\\
	&\ \ \ +\int\left(\frac{\delta\bar{\mcal{H}}(t,p^{*},u^{*},w^{*})}{\delta u}(s)\right)^{\top}(u(t,s)-u^{*}(t,s))ds,\\
	\bar{g}(p)
	&\geq\bar{g}(p^{*})+\int\frac{\delta\bar{g}(p^{*})}{\delta p}(s)(p(T,s)-p^{*}(T,s))ds.
\end{align}
Hence, the following inequality is satisfied: 
\begin{align}
	J[u]-J[u^{*}]
	&\geq\int_{0}^{T}\mb{E}_{p^{*}(t,s)}\left[\left(\frac{\pl \mcal{H}(t,s,p^{*},u^{*},w^{*})}{\pl u}\right)^{\top}(u(t,s)-u^{*}(t,s))\right]dt. 
\end{align}
Because $u^{*}$ satisfies (\ref{eq: optimal control of MFC}), the following stationary condition is satisfied: 
\begin{align}
	\frac{\pl \mcal{H}(t,s,p^{*},u^{*},w^{*})}{\pl u}=0. 
\end{align}
Hence, the following inequality is satisfied: 
\begin{align}
	J[u]-J[u^{*}]
	&\geq0
\end{align}
Therefore, $u^{*}$ is the optimal control function.
\end{proof}

\subsection{Relationship with Bellman's Dynamic Programming Principle}\label{appendix: MFC-DPP}
From the Bellman's dynamic programming principle on the probability density function space \cite{lauriere_dynamic_2014,lauriere_dynamic_2016,pham_bellman_2018}, the optimal control function of MFSC is given by the following equation: 
\begin{align}
	u^{*}(t,s,p)=\argmin_{u}\mcal{H}\left(t,s,p,u,\frac{\delta V^{*}(t,p)}{\delta p}(s)\right). 
	\label{eq: optimal control of  MFC DPP}
\end{align}
More specifically, the optimal control function is given by $u^{*}(t,s)=u^{*}(t,s,p^{*})$, where $p^{*}(t,s)$ is the solution of the FP equation (\ref{eq: optimal FP eq MFC}). 
$V^{*}(t,p)$ is the value function on the probability density function space, which is the solution of the following Bellman equation: 
\begin{align}
	-\frac{\pl V^{*}(t,p)}{\pl t}=\mb{E}_{p(s)}\left[\mcal{H}\left(t,s,p,u^{*},\frac{\delta V^{*}(t,p)}{\delta p}(s)\right)\right],
	\label{eq: Bellman eq MFC}
\end{align}
where $V^{*}(T,p)=\mb{E}_{p(s)}\left[g(s)\right]$. 
Because the Bellman equation (\ref{eq: Bellman eq MFC}) is a functional differential equation, it cannot be solved even numerically. 
In order to resolve this problem, the previous works \cite{bensoussan_master_2015,bensoussan_interpretation_2017} converted the Bellman equation (\ref{eq: Bellman eq MFC}) into the system of HJB-FP equation (\ref{eq: optimal FP eq MFC}) and (\ref{eq: optimal HJB eq MFC}) as follows: 

\begin{prop}[\cite{bensoussan_master_2015,bensoussan_interpretation_2017}]\label{prop: DPP MFC}
We define $w^{*}(t,s)$ from $V^{*}(t,p)$ as follows: 
\begin{align}
	w^{*}(t,s):=\frac{\delta V^{*}(t,p^{*})}{\delta p}(s), 
\end{align}
where $p^{*}(t,s)$ is the solution of the FP equation (\ref{eq: optimal FP eq MFC}). 
$w^{*}(t,s)$ satisfies the HJB equation (\ref{eq: optimal HJB eq MFC}) from the Bellman equation (\ref{eq: Bellman eq MFC}). 
\end{prop}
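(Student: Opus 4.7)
The plan is to mirror the argument of Proposition \ref{prop: DPP DC} one level higher, replacing the pointwise state derivative $\pl/\pl s$ with the functional derivative $\delta/\delta p$. Define
\[
W^{*}(t,p)(s):=\frac{\delta V^{*}(t,p)}{\delta p}(s),
\]
so that $w^{*}(t,s)=W^{*}(t,p^{*}_{t})(s)$ along the optimal flow of the FP equation (\ref{eq: optimal FP eq MFC}). The goal is to derive a ``master''-type functional equation for $W^{*}$ and show that, when evaluated along $p^{*}_{t}$ and combined with the FP evolution, it reduces to the HJB equation (\ref{eq: optimal HJB eq MFC}).

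First I would take the functional derivative $\delta/\delta p(s)$ of both sides of the Bellman equation (\ref{eq: Bellman eq MFC}). The left-hand side yields $-\pl_{t}W^{*}(t,p)(s)$. On the right-hand side three contributions appear: (i) differentiating the outer $\mb{E}_{p}[\cdot]$ brings down $\mcal{H}(t,s,p,u^{*},W^{*}(t,p))$; (ii) differentiating the explicit $p$-dependence inside $\mcal{H}$ gives $\mb{E}_{p(s')}[\delta\mcal{H}(t,s',p,u^{*},W^{*})/\delta p(s)]$; (iii) differentiating the $W^{*}(t,p)(s')$ slot of $\mcal{H}$ gives a second-order functional derivative of $V^{*}$ composed with the backward diffusion operator acting in $s'$. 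The derivative through $u^{*}(t,s',p)$ itself drops out by the first-order optimality condition (\ref{eq: optimal control of  MFC DPP}), which is the envelope argument.

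Next I would compute the total time derivative of $w^{*}(t,s)=W^{*}(t,p^{*}_{t})(s)$ using the chain rule for functional derivatives,
\[
\frac{\pl w^{*}(t,s)}{\pl t}=\pl_{t}W^{*}(t,p^{*}_{t})(s)+\int\frac{\delta W^{*}(t,p^{*}_{t})(s)}{\delta p(s')}\,\frac{\pl p^{*}(t,s')}{\pl t}\,ds'.
\]
Substituting the FP equation (\ref{eq: optimal FP eq MFC}) and invoking the conjugacy (\ref{eq: conjugate MFC}) rewrites the second term as an expectation against $p^{*}$ of $\mcal{L}_{u^{*}}$ applied (in $s'$) to the mixed second functional derivative of $V^{*}$. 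By the Schwarz symmetry of mixed functional derivatives this cancels contribution (iii) exactly, leaving
\[
-\frac{\pl w^{*}(t,s)}{\pl t}=\mcal{H}(t,s,p^{*},u^{*},w^{*})+\mb{E}_{p^{*}(s')}\left[\frac{\delta\mcal{H}(t,s',p^{*},u^{*},w^{*})}{\delta p}(s)\right]=\frac{\delta\bar{\mcal{H}}(t,p^{*},u^{*},w^{*})}{\delta p}(s),
\]
which is exactly the HJB equation (\ref{eq: optimal HJB eq MFC}). The terminal condition $w^{*}(T,s)=(\delta\bar{g}(p^{*}_{T})/\delta p)(s)$ follows immediately from $V^{*}(T,p)=\bar{g}(p)$.

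The main obstacle is the cancellation in the previous step: the second-order functional-derivative contribution produced by the FP-driven chain rule must exactly offset the one produced by differentiating the $W^{*}$-slot of the Hamiltonian. This is the functional-space analogue of the cancellation of $(\pl\Lambda^{*}/\pl s)^{\top}(ds^{*}/dt-b)$ in the deterministic proof, and it requires careful bookkeeping of a mixed double functional derivative together with the conjugacy of $\mcal{L}_{u}$ and $\mcal{L}_{u}^{\dag}$. An additional subtlety, absent in ML-POSC where $\bar{\mcal{H}}$ is linear in $p$, is that in MFSC the integrand $\mcal{H}$ itself depends on $p$ through $b$, $\sigma$, and $f$, so $\delta\bar{\mcal{H}}/\delta p$ has both an outer and an inner piece; one must ensure that neither is double-counted nor dropped when matching terms to the HJB right-hand side.
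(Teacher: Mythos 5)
Your argument is correct (at the usual formal level of regularity: smoothness of $V^{*}$, symmetry of mixed functional derivatives, and the envelope condition $\pl\mcal{H}/\pl u=0$ at $u^{*}$), and it is essentially the same derivation the paper relies on: the paper defers to \cite{bensoussan_master_2015,bensoussan_interpretation_2017}, which obtain precisely this master-equation computation, and your proof is the density-space lift of the paper's own proof of Proposition \ref{prop: DPP DC}, with $\delta/\delta p$ replacing $\pl/\pl s$, the FP equation replacing the state equation, and the conjugacy (\ref{eq: conjugate MFC}) producing the cancellation of the second-order term. No gaps beyond these standard smoothness assumptions.
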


\begin{proof}
The proof is shown in \cite{bensoussan_master_2015,bensoussan_interpretation_2017}. 
\end{proof}

This approach can be interpreted as the conversion from the Bellman's dynamic programming principle into the Pontryagin's minimum principle on the probability density function space.

\section{Proof}\label{appendix: proof}
\subsection{Proof of Lemma \ref{lemm: J-J ML-POSC}}\label{appendix: proof: lemm: J-J ML-POSC}
$J[u]-J[u']$ can be calculated as follows: 
\begin{align}
	J[u]-J[u']
	&=\mb{E}_{p(s_{0:T})}\left[\int_{0}^{T}f(t,s_{t},u_{t})dt+g(s_{T})\right]-\mb{E}_{p'(s_{0:T})}\left[\int_{0}^{T}f(t,s_{t},u_{t}')dt+g(s_{T})\right]\nonumber\\
	&=\mb{E}_{p(s_{0:T})}\left[\int_{0}^{T}\left(\mcal{H}(t,s_{t},u_{t},w')-\mcal{L}_{u_{t}}w'(t,s_{t})\right)dt+g(s_{T})\right]\nonumber\\
	&\ \ \ -\mb{E}_{p'(s_{0:T})}\left[\int_{0}^{T}\left(\mcal{H}(t,s_{t},u_{t}',w')-\mcal{L}_{u_{t}'}w'(t,s_{t})\right)dt+g(s_{T})\right]\nonumber\\
	&=\int_{0}^{T}\left(\mb{E}_{p(t,s)}\left[\mcal{H}(t,s,u,w')\right]-\mb{E}_{p'(t,s)}\left[\mcal{H}(t,s,u',w')\right]\right)dt\nonumber\\
	&\ \ \ -\int_{0}^{T}\left(\mb{E}_{p(t,s)}\left[\mcal{L}_{u}w'(t,s)\right]-\mb{E}_{p'(t,s)}\left[\mcal{L}_{u'}w'(t,s)\right]\right)dt\nonumber\\
	&\ \ \ +\mb{E}_{p(T,s)}\left[g(s)\right]-\mb{E}_{p'(T,s)}\left[g(s)\right].
\end{align}
Because $\mcal{L}_{u_{t}}$ and $\mcal{L}_{u_{t}'}$ are the conjugates of $\mcal{L}_{u_{t}}^{\dag}$ and $\mcal{L}_{u_{t}'}^{\dag}$, respectively, 
\begin{align}
	J[u]-J[u']
	&=\int_{0}^{T}\left(\mb{E}_{p(t,s)}\left[\mcal{H}(t,s,u,w')\right]-\mb{E}_{p'(t,s)}\left[\mcal{H}(t,s,u',w')\right]\right)dt\nonumber\\
	&\ \ \ -\int_{0}^{T}\int \left(\mcal{L}_{u}^{\dag}p(t,s)-\mcal{L}_{u'}^{\dag}p'(t,s)\right)w'(t,s)dsdt\nonumber\\
	&\ \ \ +\mb{E}_{p(T,s)}\left[g(s)\right]-\mb{E}_{p'(T,s)}\left[g(s)\right].
\end{align}
From the FP equation (\ref{eq: FP eq}), 
\begin{align}
	J[u]-J[u']
	&=\int_{0}^{T}\left(\mb{E}_{p(t,s)}\left[\mcal{H}(t,s,u,w')\right]-\mb{E}_{p'(t,s)}\left[\mcal{H}(t,s,u',w')\right]\right)dt\nonumber\\
	&\ \ \ -\int_{0}^{T}\int \frac{\pl\left(p(t,s)-p'(t,s)\right)}{\pl t}w'(t,s)dsdt\nonumber\\
	&\ \ \ +\mb{E}_{p(T,s)}\left[g(s)\right]-\mb{E}_{p'(T,s)}\left[g(s)\right].
\end{align}
From the integration by parts and $p(0,s)-p'(0,s)=p_{0}(s)-p_{0}(s)=0$, 
\begin{align}
	J[u]-J[u']
	&=\int_{0}^{T}\left(\mb{E}_{p(t,s)}\left[\mcal{H}(t,s,u,w')\right]-\mb{E}_{p'(t,s)}\left[\mcal{H}(t,s,u',w')\right]\right)dt\nonumber\\
	&\ \ \ +\int_{0}^{T}\int \left(p(t,s)-p'(t,s)\right)\frac{\pl w'(t,s)}{\pl t}dsdt\nonumber\\
	&\ \ \ +\mb{E}_{p(T,s)}\left[g(s)\right]-\mb{E}_{p'(T,s)}\left[g(s)\right]-\int \left(p(T,s)-p'(T,s)\right)w'(T,s)ds.
\end{align} 
From the HJB equation (\ref{eq: HJB eq}),  (\ref{eq: J-J ML-POSC}) is obtained. 

\subsection{Proof of Theorem \ref{theo: NC ML-POSC}}\label{appendix: proof: theo: NC ML-POSC}
We define the control function
\begin{align}
	u^{\ve}(t,z):=\begin{cases}
		u^{*}(t,z)&(t,z)\in([0,T]\times\mb{R}^{d_{z}})\backslash(E_{\ve_{1}}\times F_{\ve_{2}}),\\
		u(t,z)&(t,z)\in E_{\ve_{1}}\times F_{\ve_{2}},\\
	\end{cases}
\end{align}
where $E_{\ve_{1}}:=[t',t'+\ve_{1}]\subseteq[0,T]$, $F_{\ve_{2}}:=[z',z'+\ve_{2}]\subseteq\mb{R}^{d_{z}}$, and $^\forall u:[0,T]\times\mb{R}^{d_{z}}\to\mb{R}^{d_{u}}$. 
From the Lemma \ref{lemm: J-J ML-POSC}, $J[u^{\ve}]-J[u^{*}]$ can be calculated as follows: 
\begin{align}
	J[u^{\ve}]-J[u^{*}]
	&=\int_{0}^{T}\left(\mb{E}_{p^{\ve}(t,s)}\left[\mcal{H}(t,s,u^{\ve},w^{*})\right]-\mb{E}_{p^{\ve}(t,s)}\left[\mcal{H}(t,s,u^{*},w^{*})\right]\right)dt\nonumber\\
	&=\int_{E_{\ve_{1}}}\int_{F_{\ve_{2}}}\left(\mb{E}_{p_{t}^{\ve}(x|z)}\left[\mcal{H}(t,s,u,w^{*})\right]-\mb{E}_{p_{t}^{\ve}(x|z)}\left[\mcal{H}(t,s,u^{*},w^{*})\right]\right)p_{t}^{\ve}(z)dzdt.\nonumber
\end{align}
Letting $\ve_{1}\to0$ and $\ve_{2}\to0$, 
\begin{align}
	J[u^{\ve}]-J[u^{*}]
	&=\left(\mb{E}_{p_{t'}^{*}(x'|z')}\left[\mcal{H}(t',s',u,w^{*})\right]-\mb{E}_{p_{t'}^{*}(x'|z')}\left[\mcal{H}(t',s',u^{*},w^{*})\right]\right)p_{t'}^{*}(z')dzdt.\nonumber
\end{align}
Because $u^{*}$ is the optimal control function, the following inequality is satisfied: 
\begin{align}
	0\leq J[u^{\ve}]-J[u^{*}]
	&=\left(\mb{E}_{p_{t'}^{*}(x'|z')}\left[\mcal{H}(t',s',u,w^{*})\right]-\mb{E}_{p_{t'}^{*}(x'|z')}\left[\mcal{H}(t',s',u^{*},w^{*})\right]\right)p_{t'}^{*}(z')dzdt.\nonumber
\end{align}
Therefore, (\ref{eq: optimal control of ML-POSC}) is obtained. 

\subsection{Proof of Proposition \ref{prop: SC ML-POSC}}\label{appendix: proof: theo: SC ML-POSC}
We define the arbitrary control function $^\forall u: [0,T]\times\mb{R}^{d_{z}}\to\mb{R}^{d_{u}}$. 
From the Lemma \ref{lemm: J-J ML-POSC}, $J[u]-J[u^{*}]$ is given by the following equation: 
\begin{align}
	J[u]-J[u^{*}]
	&=\int_{0}^{T}\left(\mb{E}_{p(t,s)}\left[\mcal{H}(t,s,u,w^{*})\right]-\mb{E}_{p(t,s)}\left[\mcal{H}(t,s,u^{*},w^{*})\right]\right)dt.
\end{align}
Because $\bar{\mcal{H}}(t,p,u,w)$ is convex with respect to $p$ and $u$, the following inequality is satisfied: 
\begin{align}
	\mb{E}_{p(t,s)}\left[\mcal{H}(t,s,u,w^{*})\right]
	&=\bar{\mcal{H}}(t,p,u,w^{*})\nonumber\\
	&\geq\bar{\mcal{H}}(t,p^{*},u^{*},w^{*})+\int\frac{\delta\bar{\mcal{H}}(t,p^{*},u^{*},w^{*})}{\delta p}(s)(p(t,s)-p^{*}(t,s))ds\nonumber\\
	&\ \ \ +\int\left(\frac{\delta\bar{\mcal{H}}(t,p^{*},u^{*},w^{*})}{\delta u}(z)\right)^{\top}(u(t,z)-u^{*}(t,z))dz.
\end{align}
Because 
\begin{align}
	\frac{\delta\bar{\mcal{H}}(t,p^{*},u^{*},w^{*})}{\delta p}(s)
	&=\left.\frac{\delta}{\delta p}\left(\int p(s)\mcal{H}(t,s,u^{*},w^{*})ds\right)\right|_{p=p^{*}}\nonumber\\
	&=\mcal{H}(t,s,u^{*},w^{*}),\\
	\frac{\delta\bar{\mcal{H}}(t,p^{*},u^{*},w^{*})}{\delta u}(z)
	&=\left.\frac{\delta}{\delta u}\left(\int p_{t}^{*}(z)\mb{E}_{p_{t}^{*}(x|z)}\left[\mcal{H}(t,s,u,w^{*})\right]dz\right)\right|_{u=u^{*}}\nonumber\\
	&=p_{t}^{*}(z)\frac{\pl\mb{E}_{p_{t}^{*}(x|z)}\left[\mcal{H}(t,s,u^{*},w^{*})\right]}{\pl u},
\end{align}
the above inequality can be calculated as follows: 
\begin{align}
	\mb{E}_{p(t,s)}\left[\mcal{H}(t,s,u,w^{*})\right]
	&\geq\int p^{*}(t,s)\mcal{H}(t,s,u^{*},w^{*})ds+\int\mcal{H}(t,s,u^{*},w^{*})(p(t,s)-p^{*}(t,s))ds\nonumber\\
	&\ \ \ +\int p_{t}^{*}(z)\left(\frac{\pl\mb{E}_{p_{t}^{*}(x|z)}\left[\mcal{H}(t,s,u^{*},w^{*})\right]}{\pl u}\right)^{\top}(u(t,z)-u^{*}(t,z))dz\nonumber\\
	&=\mb{E}_{p(t,s)}\left[\mcal{H}(t,s,u^{*},w^{*})\right]\nonumber\\
	&\ \ \ +\mb{E}_{p_{t}^{*}(z)}\left[\left(\frac{\pl \mb{E}_{p_{t}^{*}(x|z)}\left[\mcal{H}(t,s,u^{*},w^{*})\right]}{\pl u}\right)^{\top}(u(t,z)-u^{*}(t,z))\right].
\end{align}
Hence, the following inequality is satisfied: 
\begin{align}
	J[u]-J[u^{*}]
	&\geq\int_{0}^{T}\mb{E}_{p_{t}^{*}(z)}\left[\left(\frac{\pl \mb{E}_{p_{t}^{*}(x|z)}\left[\mcal{H}(t,s,u^{*},w^{*})\right]}{\pl u}\right)^{\top}(u(t,z)-u^{*}(t,z))\right]dt. 
\end{align}
Because $u^{*}$ satisfies (\ref{eq: optimal control of ML-POSC}), the following stationary condition is satisfied: 
\begin{align}
	\frac{\pl \mb{E}_{p_{t}^{*}(x|z)}\left[\mcal{H}(t,s,u^{*},w^{*})\right]}{\pl u}=0. 
\end{align}
Hence, the following inequality is satisfied: 
\begin{align}
	J[u]-J[u^{*}]
	&\geq0
\end{align}
Therefore, $u^{*}$ is the optimal control function.

\subsection{Proof of Proposition \ref{prop: SC COSC}}\label{appendix: proof: theo: SC COSC}
We define the arbitrary control function $^\forall u: [0,T]\times\mb{R}^{d_{s}}\to\mb{R}^{d_{u}}$. 
From the Lemma \ref{lemm: J-J ML-POSC}, $J[u]-J[u^{*}]$ is given by the following equation: 
\begin{align}
	J[u]-J[u^{*}]
	&=\int_{0}^{T}\left(\mb{E}_{p(t,s)}\left[\mcal{H}(t,s,u,w^{*})\right]-\mb{E}_{p(t,s)}\left[\mcal{H}(t,s,u^{*},w^{*})\right]\right)dt.
\end{align}
From (\ref{eq: optimal control of COSC}), the following inequality is satisfied: 
\begin{align}
	J[u]-J[u^{*}]
	&\geq\int_{0}^{T}\left(\mb{E}_{p(t,s)}\left[\mcal{H}(t,s,u^{*},w^{*})\right]-\mb{E}_{p(t,s)}\left[\mcal{H}(t,s,u^{*},w^{*})\right]\right)dt=0.
\end{align}
Therefore, $u^{*}$ is the optimal control function.

\subsection{Proof of Lemma \ref{lemm: current optimal control} by the time discretized method}\label{appendix: proof: lemm: current optimal control ver 1}
$u_{t}':=\argmin_{u_{t}}J[u_{0:T-dt}]$ can be calculated as follows: 
\begin{align}
	u_{t}'
	&=\argmin_{u_{t}}J[u_{0:T-dt}]\nonumber\\
	&=\argmin_{u_{t}}\mb{E}_{p(s_{0:T};u_{0:T-dt})}\left[\int_{0}^{T}f(\tau,s_{\tau},u_{\tau})d\tau+g(s_{T})\right]\nonumber\\
	&=\argmin_{u_{t}}\mb{E}_{p(s_{t:T};u_{0:T-dt})}\left[\int_{t}^{T}f(\tau,s_{\tau},u_{\tau})d\tau+g(s_{T})\right]\nonumber\\
	&=\argmin_{u_{t}}\mb{E}_{p(s_{t:T};u_{0:T-dt})}\left[f(t,s_{t},u_{t})dt+\int_{t+dt}^{T}f(\tau,s_{\tau},u_{\tau})d\tau+g(s_{T})\right]\nonumber\\
	&=\argmin_{u_{t}}\mb{E}_{p_{t}(s_{t})}\left[f(t,s_{t},u_{t})dt+\mb{E}_{p(s_{t+dt}|s_{t};u_{t})}\left[w_{t+dt}(s_{t+dt})\right]\right].
\end{align}
where $p_{t}(s)$ is the solution of the FP equation (\ref{eq: time-discretized FP eq}), and $w_{t+dt}(s)$ is defined by 
\begin{align}
	w_{t+dt}(s):=\mb{E}_{p(s_{t+2dt:T}|s_{t+dt}=s;u_{t+dt:T-dt})}\left[\int_{t+dt}^{T}f(\tau,s_{\tau},u_{\tau})d\tau+g(s_{T})\right].
\end{align}
Although $p_{t}(s)$ and $w_{t+dt}(s)$ depend on $u_{0:t-dt}$ and $u_{t+dt:T-dt}$, respectively, they do not depend on $u_{t}$ in ML-POSC. 
From the Ito's lemma, 
\begin{align}
	u_{t}'
	&=\argmin_{u_{t}}\mb{E}_{p_{t}(s_{t})}\left[f(t,s_{t},u_{t})dt+w_{t+dt}(s_{t})+\mcal{L}_{u_{t}}w_{t+dt}(s_{t})dt\right]\nonumber\\
	&=\argmin_{u_{t}}\mb{E}_{p_{t}(s)}\left[\mcal{H}(t,s,u_{t},w_{t+dt})\right]. 
\end{align}
Because the control $u_{t}$ is the function of the memory $z$ in ML-POSC, the minimization by $u_{t}$ can be exchanged with the expectation by $p_{t}(z)$ as follows:  
\begin{align}
	u_{t}'(z)=\argmin_{u_{t}}\mb{E}_{p_{t}(x|z)}\left[\mcal{H}\left(t,s,u_{t},w_{t+dt}\right)\right]=u_{t}^{*}(z).
\end{align}
Therefore, (\ref{eq: current optimal control of ML-POSC J}) is proven. 
Finally, we prove that $w_{t}(s)$ is the solution of the HJB equation (\ref{eq: time-discretized HJB eq}). 
$w_{t}(s)$ can be calculated as follows: 
\begin{align}
	w_{t}(s)
	&=\mb{E}_{p(s_{t+dt:T}|s_{t}=s;u_{t:T-dt})}\left[\int_{t}^{T}f(\tau,s_{\tau},u_{\tau})d\tau+g(s_{T})\right]\nonumber\\
	&=f(t,s,u_{t})dt+\mb{E}_{p(s_{t+dt}|s_{t}=s;u_{t})}\left[w_{t+dt}(s_{t+dt})\right]\nonumber\\
	&=f(t,s,u_{t})dt+w_{t+dt}(s)+\mcal{L}_{u_{t}}w_{t+dt}(s)dt\nonumber\\
	&=w_{t+dt}(s)+\mcal{H}(t,s,u_{t},w_{t+dt})dt,
\end{align}
where $w_{T}(s)=g(s)$. 
Therefore, (\ref{eq: time-discretized HJB eq}) is proven. 

\subsection{Proof of Lemma \ref{lemm: current optimal control} by the similar way as the Pontyragin's minimum principle}\label{appendix: proof: lemm: current optimal control ver 2}
From the Lemma \ref{lemm: J-J ML-POSC}, the following equality is satisfied: 
\begin{align}
	&J[u_{0:t-dt},u_{t},u_{t+dt:T-dt}]-J[u_{0:t-dt},u_{t}^{*},u_{t+dt:T-dt}]\nonumber\\
	&=\left(\mb{E}_{p_{t}(s)}\left[\mcal{H}(t,s,u_{t},w_{t+dt})\right]-\mb{E}_{p_{t}(s)}\left[\mcal{H}(t,s,u_{t}^{*},w_{t+dt})\right]\right)dt\nonumber\\
	&=\mb{E}_{p_{t}(z)}\left[\mb{E}_{p_{t}(x|z)}\left[\mcal{H}(t,s,u_{t},w_{t+dt})\right]-\mb{E}_{p_{t}(x|z)}\left[\mcal{H}(t,s,u_{t}^{*},w_{t+dt})\right]\right]dt.
\end{align}
From (\ref{eq: current optimal control of ML-POSC H}), the following inequality is satisfied. 
\begin{align}
	J[u_{0:t-dt},u_{t},u_{t+dt:T-dt}]-J[u_{0:t-dt},u_{t}^{*},u_{t+dt:T-dt}]
	&\geq 0.
\end{align}
Therefore, $u_{t}^{*}$ satisfies (\ref{eq: current optimal control of ML-POSC J}).

\subsection{Proof of Lemma \ref{lemm: tight monotonicity of FBSM}}\label{appendix: proof: lemm: tight monotonicity of FBSM}
We mainly prove the inequality of the forward step (\ref{eq: tight monotonicity of forward step}).  
We can prove the inequality of the backward step (\ref{eq: tight monotonicity of backward step}) in a similar way. 
In the forward step, $u_{0:t-dt}^{k+1}$ and $u_{t+dt:T-dt}^{k}$ are given, and $u_{t}^{k+1}$ is defined by $u_{t}^{k+1}:=\argmin_{u_{t}}\mb{E}_{p_{t}^{k+1}(x|z)}\left[\mcal{H}\left(t,s,u_{t},w_{t+dt}^{k}\right)\right]$. 
In this case, from  Lemma \ref{lemm: current optimal control}, the following inequality is satisfied: 
\begin{align}
	J[u_{0:t-dt}^{k+1},u_{t}^{k},u_{t+dt:T-dt}^{k}]\geq J[u_{0:t-dt}^{k+1},u_{t}^{k+1},u_{t+dt:T-dt}^{k}]. 
\end{align}
Therefore, the inequality of the forward step (\ref{eq: tight monotonicity of forward step}) is proven.

\subsection{Proof of Theorem \ref{theo: convergence of FBSM}}\label{appendix: proof: theo: convergence of FBSM}
We mainly consider the forward step. 
The similar discussion is possible for the backward step. 
If $J[u_{0:T-dt}^{k+1}]=J[u_{0:T-dt}^{k}]$ holds, then $J[u_{0:t}^{k+1},u_{t+dt:T-dt}^{k}]=J[u_{0:t-dt}^{k+1},u_{t:T-dt}^{k}]$ holds from Lemma \ref{lemm: tight monotonicity of FBSM}. 
Because $J[u_{0}^{k+1},u_{dt:T-dt}^{k}]=J[u_{0:T-dt}^{k}]$ holds, $u_{0}^{k+1}=u_{0}^{k}$ holds. 
Then, because $J[u_{0}^{k},u_{dt}^{k+1},u_{2dt:T-dt}^{k}]=J[u_{0:T-dt}^{k}]$ holds, $u_{dt}^{k+1}=u_{dt}^{k}$ holds. 
Iterating this procedure from $t=0$ to $t=T-dt$, $u_{0:T-dt}^{k+1}=u_{0:T-dt}^{k}$ holds. 
Therefore, because the HJB equation and the FP equation depend on the same control function $u_{0:T-dt}^{k+1}=u_{0:T-dt}^{k}$, $u_{0:T-dt}^{k+1}$ satisfies the Pontryagin's minimum principle (Theorem \ref{theo: NC ML-POSC}), which is the necessary condition of the optimal control function. 

\subsection{Proof of Proposition \ref{prop: FBSM for LQG problem}}\label{appendix: proof: theo: FBSM for LQG problem}
We firstly consider the initial step. 
When the control function is initialized by (\ref{eq: FBSM LQG the initial control function}), the solution of the FP equation is given by the Gaussian distribution $p_{t}^{0}(s):=\mcal{N}(s|\mu,\Lambda^{0})$, where $\mu$ is the solution of (\ref{eq: ODE of mu}) and $\Lambda^{0}$ is the solution of $\dot{\Lambda}^{0}=\mcal{F}(\Lambda^{0},\Pi^{0})$ given $\Lambda^{0}(0)=\Lambda_{0}$. 

We then consider the backward step. 
When the solution of the FP equation is given by the Gaussian distribution $p_{t}^{k}(s):=\mcal{N}(s|\mu,\Lambda^{k})$, the solution of the HJB equation is given by the quadratic function $w_{t}^{k+1}(s)=s^{\top}\Pi^{k+1}s+(\alpha^{k+1})^{\top}s+\beta^{k+1}$, where $\Pi^{k+1}$, $\alpha^{k+1}$, and $\beta^{k+1}$ are the solutions of the following ODEs: 
\begin{align}
	-\dot{\Pi}^{k+1}&=\mcal{G}(\Lambda^{k},\Pi^{k+1}),\label{eq: ODE of Pi BS}\\
	-\dot{\alpha}^{k+1}&=(A-BR^{-1}B^{\top}\Pi^{k+1})^{\top}\alpha^{k+1}-2(I-K(\Lambda^{k}))^{\top}\Pi^{k+1} BR^{-1}B^{\top}\Pi^{k+1} (I-K(\Lambda^{k}))\mu,\label{eq: ODE of alpha BS}\\
	-\dot{\beta}^{k+1}&=\tr\left(\Pi^{k+1}\sigma\sigma^{\top}\right)-\frac{1}{4}(\alpha^{k+1})^{\top}BR^{-1}B^{\top}\alpha^{k+1}+\mu^{\top}(I-K(\Lambda^{k}))^{\top}\Pi^{k+1} BR^{-1}B^{\top}\Pi^{k+1}(I-K(\Lambda^{k}))\mu,\label{eq: ODE of beta BS}
\end{align}
where $\Pi^{k+1}(T)=P$, $\alpha^{k+1}(T)=0$, and $\beta^{k+1}(T)=0$. 

We finally consider the forward step. 
When the solution of the HJB equation is given by the quadratic function $w_{t}^{k}(s)=s^{\top}\Pi^{k}s+(\alpha^{k})^{\top}s+\beta^{k}$, the solution of the FP equation is given by the Gaussian distribution $p_{t}^{k+1}(s):=\mcal{N}(s|\mu,\Lambda^{k+1})$, where $\mu$ is the solution of (\ref{eq: ODE of mu}) and $\Lambda^{k+1}$ is the solution of $\dot{\Lambda}^{k+1}=\mcal{F}(\Lambda^{k+1},\Pi^{k})$ given $\Lambda^{k+1}(0)=\Lambda_{0}$. 
Therefore, FBSM is reduced from Algorithm \ref{alg: FBSM} to Algorithm \ref{alg: FBSM LQG} in the LQG problem. 
The details of these calculations are almost the same with \cite{tottori_memory-limited_2022}.

\section*{ACKNOWLEDGMENT}
The first author received a JSPS Research Fellowship (Grant No. 21J20436). 
This work was supported by JSPS KAKENHI (Grant No. 19H05799) and JST CREST (Grant No. JPMJCR2011).

\bibliographystyle{ieeetr}
\bibliography{220709_ML-POSC_FBSM_ref}

\begin{thebibliography}{10}

\bibitem{fox_minimum-information_2016}
R.~Fox and N.~Tishby, ``Minimum-information {LQG} control {Part} {II}:
  {Retentive} controllers,'' in {\em 2016 {IEEE} 55th {Conference} on
  {Decision} and {Control} ({CDC})}, pp.~5603--5609, Dec. 2016.

\bibitem{fox_minimum-information_2016-1}
R.~Fox and N.~Tishby, ``Minimum-information {LQG} control part {I}:
  {Memoryless} controllers,'' in {\em 2016 {IEEE} 55th {Conference} on
  {Decision} and {Control} ({CDC})}, pp.~5610--5616, Dec. 2016.

\bibitem{li_iterative_2006}
W.~Li and E.~Todorov, ``An {Iterative} {Optimal} {Control} and {Estimation}
  {Design} for {Nonlinear} {Stochastic} {System},'' in {\em Proceedings of the
  45th {IEEE} {Conference} on {Decision} and {Control}}, pp.~3242--3247, Dec.
  2006.

\bibitem{li_iterative_2007}
W.~Li and E.~Todorov, ``Iterative linearization methods for approximately
  optimal control and estimation of non-linear stochastic system,'' {\em
  International Journal of Control}, vol.~80, pp.~1439--1453, Sept. 2007.

\bibitem{nakamura_connection_2021}
K.~Nakamura and T.~J. Kobayashi, ``Connection between the {Bacterial}
  {Chemotactic} {Network} and {Optimal} {Filtering},'' {\em Physical Review
  Letters}, vol.~126, p.~128102, Mar. 2021.

\bibitem{nakamura_optimal_2022}
K.~Nakamura and T.~J. Kobayashi, ``Optimal sensing and control of
  run-and-tumble chemotaxis,'' {\em Physical Review Research}, vol.~4,
  p.~013120, Feb. 2022.

\bibitem{pezzotta_chemotaxis_2018}
A.~Pezzotta, M.~Adorisio, and A.~Celani, ``Chemotaxis emerges as the optimal
  solution to cooperative search games,'' {\em Physical Review E}, vol.~98,
  p.~042401, Oct. 2018.

\bibitem{borra_optimal_2021}
F.~Borra, M.~Cencini, and A.~Celani, ``Optimal collision avoidance in swarms of
  active {Brownian} particles,'' {\em Journal of Statistical Mechanics: Theory
  and Experiment}, vol.~2021, p.~083401, Aug. 2021.

\bibitem{tottori_memory-limited_2022}
T.~Tottori and T.~J. Kobayashi, ``Memory-{Limited} {Partially} {Observable}
  {Stochastic} {Control} and {Its} {Mean}-{Field} {Control} {Approach},'' {\em
  Entropy}, vol.~24, p.~1599, Nov. 2022.

\bibitem{yong_stochastic_1999}
J.~Yong and X.~Y. Zhou, {\em Stochastic {Controls}}.
\newblock New York, NY: Springer New York, 1999.

\bibitem{kushner_numerical_1992}
H.~J. Kushner and P.~G. Dupuis, {\em Numerical {Methods} for {Stochastic}
  {Control} {Problems} in {Continuous} {Time}}.
\newblock New York, NY: Springer US, 1992.

\bibitem{fleming_controlled_2006}
W.~H. Fleming and H.~M. Soner, {\em Controlled {Markov} processes and viscosity
  solutions}.
\newblock No.~25 in Applications of mathematics, New York: Springer, 2nd
  ed~ed., 2006.

\bibitem{puterman_markov_2014}
M.~L. Puterman, {\em Markov {Decision} {Processes}: {Discrete} {Stochastic}
  {Dynamic} {Programming}}.
\newblock Wiley-Interscience, Aug. 2014.

\bibitem{vinter_optimal_2010}
R.~Vinter, {\em Optimal {Control}}.
\newblock Boston: Birkh^^c3^^a4user Boston, 2010.

\bibitem{lewis_optimal_2012}
F.~L. Lewis, D.~Vrabie, and V.~L. Syrmos, {\em Optimal {Control}}.
\newblock John Wiley \& Sons, Mar. 2012.

\bibitem{aschepkov_optimal_2016}
L.~T. Aschepkov, D.~V. Dolgy, T.~Kim, and R.~P. Agarwal, {\em Optimal
  {Control}}.
\newblock Cham: Springer International Publishing, 2016.

\bibitem{bensoussan_mean_2013}
A.~Bensoussan, J.~Frehse, and P.~Yam, {\em Mean {Field} {Games} and {Mean}
  {Field} {Type} {Control} {Theory}}.
\newblock Springer {Briefs} in {Mathematics}, New York, NY: Springer New York,
  2013.

\bibitem{carmona_probabilistic_2018}
R.~Carmona and F.~Delarue, {\em Probabilistic {Theory} of {Mean} {Field}
  {Games} with {Applications} {I}}.
\newblock No.~volume 83 in Probability theory and stochastic modelling, Cham:
  Springer Nature, 2018.

\bibitem{carmona_probabilistic_2018-1}
R.~Carmona and F.~Delarue, {\em Probabilistic {Theory} of {Mean} {Field}
  {Games} with {Applications} {II}}, vol.~84 of {\em Probability {Theory} and
  {Stochastic} {Modelling}}.
\newblock Cham: Springer International Publishing, 2018.

\bibitem{crisan_master_2014}
R.~Carmona and F.~Delarue, ``The {Master} {Equation} for {Large} {Population}
  {Equilibriums},'' in {\em Stochastic {Analysis} and {Applications} 2014}
  (D.~Crisan, B.~Hambly, and T.~Zariphopoulou, eds.), vol.~100, pp.~77--128,
  Cham: Springer International Publishing, 2014.
\newblock Series Title: Springer Proceedings in Mathematics \& Statistics.

\bibitem{bensoussan_master_2015}
A.~Bensoussan, J.~Frehse, and S.~C.~P. Yam, ``The {Master} equation in mean
  field theory,'' {\em Journal de Math^^c3^^a9matiques Pures et
  Appliqu^^c3^^a9es}, vol.~103, pp.~1441--1474, June 2015.

\bibitem{bensoussan_interpretation_2017}
A.~Bensoussan, J.~Frehse, and S.~C.~P. Yam, ``On the interpretation of the
  {Master} {Equation},'' {\em Stochastic Processes and their Applications},
  vol.~127, pp.~2093--2137, July 2017.

\bibitem{krylov_method_1963}
I.~Krylov and F.~Chernous'ko, ``On a method of successive approximations for
  the solution of problems of optimal control,'' {\em USSR Computational
  Mathematics and Mathematical Physics}, vol.~2, pp.~1371--1382, Jan. 1963.

\bibitem{mitter_successive_1966}
S.~K. Mitter, ``Successive approximation methods for the solution of optimal
  control problems,'' {\em Automatica}, vol.~3, pp.~135--149, Jan. 1966.

\bibitem{chernousko_method_1982}
F.~L. Chernousko and A.~A. Lyubushin, ``Method of successive approximations for
  solution of optimal control problems,'' {\em Optimal Control Applications and
  Methods}, vol.~3, no.~2, pp.~101--114, 1982.

\bibitem{lenhart_optimal_2007}
S.~Lenhart and J.~T. Workman, {\em Optimal {Control} {Applied} to {Biological}
  {Models}}.
\newblock New York: Chapman and Hall/CRC, May 2007.

\bibitem{sharp_implementation_2021}
J.~A. Sharp, K.~Burrage, and M.~J. Simpson, ``Implementation and acceleration
  of optimal control for systems biology,'' {\em Journal of The Royal Society
  Interface}, vol.~18, no.~181, p.~20210241, 2021.
\newblock Publisher: Royal Society.

\bibitem{hackbusch_numerical_1978}
W.~Hackbusch, ``A numerical method for solving parabolic equations with
  opposite orientations,'' {\em Computing}, vol.~20, pp.~229--240, Sept. 1978.

\bibitem{mcasey_convergence_2012}
M.~McAsey, L.~Mou, and W.~Han, ``Convergence of the forward-backward sweep
  method in optimal control,'' {\em Computational Optimization and
  Applications}, vol.~53, pp.~207--226, Sept. 2012.

\bibitem{carlini_semi-lagrangian_2013}
E.~Carlini and F.~J. Silva, ``Semi-{Lagrangian} schemes for mean field game
  models,'' in {\em 52nd {IEEE} {Conference} on {Decision} and {Control}},
  pp.~3115--3120, Dec. 2013.
\newblock ISSN: 0191-2216.

\bibitem{carlini_fully_2014}
E.~Carlini and F.~J. Silva, ``A {Fully} {Discrete} {Semi}-{Lagrangian} {Scheme}
  for a {First} {Order} {Mean} {Field} {Game} {Problem},'' {\em SIAM Journal on
  Numerical Analysis}, vol.~52, pp.~45--67, Jan. 2014.
\newblock Publisher: Society for Industrial and Applied Mathematics.

\bibitem{carlini_semi-lagrangian_2015}
E.~Carlini and F.~J. Silva, ``A semi-{Lagrangian} scheme for a degenerate
  second order mean field game system,'' {\em Discrete \& Continuous Dynamical
  Systems}, vol.~35, no.~9, p.~4269, 2015.

\bibitem{lauriere_numerical_2021}
M.~Lauriere, ``Numerical {Methods} for {Mean} {Field} {Games} and {Mean}
  {Field} {Type} {Control},'' June 2021.
\newblock arXiv:2106.06231 [cs, math].

\bibitem{bensoussan_estimation_2018}
A.~Bensoussan, {\em Estimation and {Control} of {Dynamical} {Systems}}, vol.~48
  of {\em Interdisciplinary {Applied} {Mathematics}}.
\newblock Cham: Springer International Publishing, 2018.

\bibitem{li_maximum_2018}
Q.~Li, L.~Chen, C.~Tai, and W.~E, ``Maximum {Principle} {Based} {Algorithms}
  for {Deep} {Learning},'' {\em Journal of Machine Learning Research}, vol.~18,
  no.~165, pp.~1--29, 2018.

\bibitem{liu_symplectic_2021}
X.~Liu and J.~Frank, ``Symplectic {Runge}^^e2^^80^^93{Kutta} discretization of
  a regularized forward^^e2^^80^^93backward sweep iteration for optimal control
  problems,'' {\em Journal of Computational and Applied Mathematics}, vol.~383,
  p.~113133, Feb. 2021.

\bibitem{bellman_dynamic_1957}
R.~E. Bellman, {\em Dynamic {Programming}}.
\newblock Princeton, USA: Princeton University Press, 1957.

\bibitem{howard_dynamic_1960}
R.~A. Howard, {\em Dynamic programming and {Markov} processes}.
\newblock Dynamic programming and {Markov} processes, Oxford, England: John
  Wiley, 1960.
\newblock Pages: viii, 136.

\bibitem{kappen_linear_2005}
H.~J. Kappen, ``Linear {Theory} for {Control} of {Nonlinear} {Stochastic}
  {Systems},'' {\em Physical Review Letters}, vol.~95, p.~200201, Nov. 2005.

\bibitem{kappen_path_2005}
H.~J. Kappen, ``Path integrals and symmetry breaking for optimal control
  theory,'' {\em Journal of Statistical Mechanics: Theory and Experiment},
  vol.~2005, pp.~P11011--P11011, Nov. 2005.

\bibitem{satoh_iterative_2017}
S.~Satoh, H.~J. Kappen, and M.~Saeki, ``An {Iterative} {Method} for {Nonlinear}
  {Stochastic} {Optimal} {Control} {Based} on {Path} {Integrals},'' {\em IEEE
  Transactions on Automatic Control}, vol.~62, pp.~262--276, Jan. 2017.
\newblock Conference Name: IEEE Transactions on Automatic Control.

\bibitem{cacace_policy_2021}
S.~Cacace, F.~Camilli, and A.~Goffi, ``A policy iteration method for {Mean}
  {Field} {Games},'' {\em arXiv:2007.04818 [math]}, July 2021.
\newblock arXiv: 2007.04818.

\bibitem{lauriere_policy_2021}
M.~Lauri^^c3^^a8re, J.~Song, and Q.~Tang, ``Policy iteration method for
  time-dependent {Mean} {Field} {Games} systems with non-separable
  {Hamiltonians},'' {\em arXiv:2110.02552 [cs, math]}, Oct. 2021.
\newblock arXiv: 2110.02552.

\bibitem{camilli_rates_2022}
F.~Camilli and Q.~Tang, ``Rates of convergence for the policy iteration method
  for {Mean} {Field} {Games} systems,'' Mar. 2022.
\newblock arXiv:2108.00755 [math].

\bibitem{ruthotto_machine_2020}
L.~Ruthotto, S.~J. Osher, W.~Li, L.~Nurbekyan, and S.~W. Fung, ``A machine
  learning framework for solving high-dimensional mean field game and mean
  field control problems,'' {\em Proceedings of the National Academy of
  Sciences}, vol.~117, pp.~9183--9193, Apr. 2020.

\bibitem{lin_alternating_2021}
A.~T. Lin, S.~W. Fung, W.~Li, L.~Nurbekyan, and S.~J. Osher, ``Alternating the
  population and control neural networks to solve high-dimensional stochastic
  mean-field games,'' {\em Proceedings of the National Academy of Sciences},
  vol.~118, Aug. 2021.

\bibitem{lauriere_dynamic_2014}
M.~Lauri^^c3^^a8re and O.~Pironneau, ``Dynamic programming for mean-field type
  control,'' {\em Comptes Rendus Mathematique}, vol.~352, pp.~707--713, Sept.
  2014.

\bibitem{lauriere_dynamic_2016}
M.~Lauri^^c3^^a8re and O.~Pironneau, ``Dynamic {Programming} for {Mean}-{Field}
  {Type} {Control},'' {\em Journal of Optimization Theory and Applications},
  vol.~169, pp.~902--924, June 2016.

\bibitem{pham_bellman_2018}
H.~Pham and X.~Wei, ``Bellman equation and viscosity solutions for mean-field
  stochastic control problem,'' {\em ESAIM: Control, Optimisation and Calculus
  of Variations}, vol.~24, pp.~437--461, Jan. 2018.

\end{thebibliography}
\end{document}